\newcommand{\gr}{\operatorname{gr}}
\newcommand{\Spec}{\operatorname{Spec}}
\renewcommand{\phi}{\varphi}
\newcommand{\Ker}{\operatorname{Ker}}
\newcommand{\Ima}{\operatorname{Im}}
\newcommand{\Max}{\operatorname{Max}}
\newcommand{\Ann}{\operatorname{Ann}}
\newcommand{\Supp}{\operatorname{Supp}}
\newcommand{\dg}{\operatorname{deg}}
\newtheorem{proposition}{Proposition}[section]
\newtheorem{lemma}[proposition]{Lemma}
\newtheorem{corollary}[proposition]{Corollary}
\newtheorem{theorem}[proposition]{Theorem}
\newtheorem{conjecture}[proposition]{Conjecture}
\theoremstyle{definition}
\newtheorem{example}[proposition]{Example}
\newtheorem{remark}[proposition]{Remark}
\patchcmd{\@settitle}{\uppercasenonmath\@title}{}{}{}
\patchcmd{\@setauthors}{\MakeUppercase}{}{}{}
\begin{document}

\title[Zero-divisors, units and colon ideals]{Homogeneity of zero-divisors, units and idempotents in a graded ring}

\author[A. Tarizadeh]{Abolfazl Tarizadeh}
\address{Department of Mathematics, Faculty of Basic Sciences, University of Maragheh, Maragheh, East Azerbaijan Province, Iran.}
\email{ebulfez1978@gmail.com}

\date{}
\subjclass[2010]{13A02, 16S34, 13A15, 20K15, 20K20, 14A05}
\keywords{Kaplansky's zero-divisor, unit and idempotent conjectures; Grothendieck group; Totally ordered monoid; Torsion-free Abelian group; Homogeneity; Jacobson radical; McCoy's theorem; Colon ideal}

\begin{abstract} In this article we prove several important results on graded rings, especially monoid-rings, that are motivated and inspired by Kaplansky's zero-divisor, unit and idempotents conjectures. Among the main results, we first generalize Kaplansky's zero-divisor conjecture of group-rings $K[G]$ (with $K$ a field) to the more general setting of $G$-graded rings $R=\bigoplus\limits_{n\in G}R_{n}$ with $G$ a torsion-free group. Then we prove that if $I$ is an unfaithful left ideal of a $G$-graded ring $R$ with $G$ a totally ordered group, then there exists a (nonzero) homogeneous element $g\in R$ such that $gI=0$. 
This theorem gives an affirmative answer to the new conjecture in the case that the group involved in the grading is a totally ordered group. 
Our result also generalizes McCoy's famous theorem on polynomial rings to the more general setting of $G$-graded rings. Next, we focus on Kaplansky's unit conjecture. In this regard, we completely characterize the units (invertible elements) of a $G$-graded ring with $G$ a torsion-free Abelian group. Then we prove a key result which asserts that every nonzero idempotent element of a $G$-graded commutative ring with $G$ a torsion-free Abelian group is homogeneous of degree zero. This important special case leads us to derive a general result which asserts that every idempotent element of a $G$-graded commutative ring $R=\bigoplus\limits_{n\in G}R_{n}$ with $G$ an Abelian group is contained in the subring $\bigoplus\limits_{h\in H}R_{h}$ where $H$ is the torsion subgroup of $G$. This theorem gives an affirmative answer to the generalized version of Kaplansky's idempotent conjecture in the commutative case. Next, we prove that the Jacobson radical of every $G$-graded commutative ring with $G$ a torsion-free Abelian group is a graded ideal. This result generalizes Bergman’s theorem in the commutative case (which asserts that the Jacobson radical of every $\mathbb{Z}$-graded commutative ring is a graded ideal). Finally, we show that an Abelian group $G$ is a torsion-free group if and only if the Jacobson radical of every $G$-graded ring is a graded ideal, or equivalently, the idempotents of every $G$-graded ring are contained in the base subring, or equivalently, the group-ring $\mathbb{Q}[G]$ has no nontrivial idempotents.
\end{abstract}

\maketitle

\section{Introduction}

Kaplansky's zero-divisor conjecture claims that if $G$ is a torsion-free group (not necessarily Abelian) and $K$ is a field, then the only zero-divisor element of the group-ring $K[G]$ is zero. Despite the proof of some special cases in the literature (see e.g. \cite{Camillo},
\cite{Cliff}, \cite{Kropholler-et-al}, \cite{McCoy}, \cite{McCoy II} and \cite[Lemma 1.9 on p. 589]{Passman}), this conjecture is still open in general. In order to get deeper insight into the above conjecture, we first generalize it as follows:   

\begin{conjecture} \label{Conjecture I} For any ring $R$ and a torsion-free group $G$, if $f$ is a zero-divisor element of the group-ring $R[G]$ then there exists a nonzero $c\in R$ such that $cf=0$. 
\end{conjecture}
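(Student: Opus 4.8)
The plan is to reformulate Conjecture~\ref{Conjecture I} as a statement about content ideals and then to reduce it, by a localization argument, to Kaplansky's original zero-divisor conjecture for fields. Write $f=\sum_{g\in G}a_{g}\,g$ and let $\mathfrak{c}(f)\subseteq R$ denote the \emph{content} ideal generated by the coefficients $a_{g}$. When $R$ is commutative, a nonzero $c\in R$ satisfies $cf=0$ precisely when $c\in\Ann_{R}(\mathfrak{c}(f))$, so the conclusion of the conjecture is equivalent to the assertion that the content of every zero-divisor is an \emph{unfaithful} ideal of $R$. In this language the conjecture is exactly the McCoy property for the group-ring $R[G]$, and the case $R=K$ a field recovers Kaplansky's conjecture, since over a field an unfaithful content forces $f=0$.

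The heart of the proposal is the following reduction, which settles the case of reduced commutative $R$ conditionally on Kaplansky's conjecture. Suppose $fh=0$ with $h=\sum_{g}b_{g}\,g\neq0$, and assume for contradiction that $\mathfrak{c}(f)$ is faithful. For each prime $\mathfrak{p}\in\Spec R$ I would pass to the residue field $\kappa(\mathfrak{p})$ and consider the induced relation $\bar{f}\,\bar{h}=0$ in $\kappa(\mathfrak{p})[G]$. If $\mathfrak{c}(f)\not\subseteq\mathfrak{p}$ then $\bar{f}\neq0$, and, granting Kaplansky's conjecture for the torsion-free group $G$, the ring $\kappa(\mathfrak{p})[G]$ is a domain, forcing $\bar{h}=0$, i.e. $b_{g}\in\mathfrak{p}$ for all $g$. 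Feeding this back, for any coefficient $a$ of $f$ and any coefficient $b$ of $h$ the product $ab$ lies in every prime of $R$ (it lies in $\mathfrak{p}$ when $\mathfrak{c}(f)\subseteq\mathfrak{p}$, because then $a\in\mathfrak{p}$, and otherwise because $b\in\mathfrak{p}$), hence $ab$ is nilpotent; when $R$ is reduced this gives $\mathfrak{c}(f)\,\mathfrak{c}(h)=0$, so each $b_{g}\in\Ann_{R}(\mathfrak{c}(f))=0$ and $h=0$, a contradiction. Combined with the trivial converse, this shows that for reduced commutative coefficient rings Conjecture~\ref{Conjecture I} for a fixed $G$ is \emph{equivalent} to Kaplansky's zero-divisor conjecture for $G$.

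Two further steps are needed to reach the statement in full. First, the nilpotent case: the argument above yields only $\mathfrak{c}(f)\,\mathfrak{c}(h)\subseteq\mathrm{nil}(R)$, and reduction modulo the nilradical may annihilate the witness $h$. Here I would replace the blunt use of all of $\Spec R$ by an associated-prime analysis, choosing $\mathfrak{p}\in\mathrm{Ass}_{R}(R)$ adapted to the relation $fh=0$ and inducting on a filtration by powers of $\mathrm{nil}(R)$, mirroring the way McCoy's classical proof absorbs nilpotents through a minimal-support choice. Second, the noncommutative case ($R$ not commutative, $G$ possibly nonabelian): the content/localization machinery is unavailable, and instead one must descend to primitive or prime quotients of $R$ and invoke the appropriate division-ring form of Kaplansky's conjecture; the nonabelian character of $G$ is already permitted in Kaplansky's formulation and introduces no difficulty beyond it.

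The main obstacle is unavoidable and lies in the invocation of the residue-field input: the reduction shows that, up to the commutative/reduced dictionary above, Conjecture~\ref{Conjecture I} is \emph{equivalent} to Kaplansky's zero-divisor conjecture for the same $G$, which is open. For orderable $G$ one can bypass the field input entirely by a McCoy-type leading-term argument (the route taken for totally ordered groups elsewhere in this paper), and more generally a unique-product hypothesis on $G$ suffices; but since not every torsion-free group has the unique-product property (as shown by the examples of Promislow and of Rips--Segev), no ordering- or unique-product-based argument can dispose of the general case. Thus the crux is precisely the requirement that $\kappa(\mathfrak{p})[G]$ be a domain for an arbitrary torsion-free $G$, and it is exactly here that the problem remains genuinely open.
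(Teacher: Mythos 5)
The statement you set out to prove is not proved in the paper at all: Conjecture~\ref{Conjecture I} is left open there, and the author establishes it only in the special case that $G$ is a totally ordered group (Theorem~\ref{th T-O}, specialized to elements in Corollary~\ref{Coro 3-uc-Kap}), by an \emph{unconditional} minimal-support argument --- choose $g\neq 0$ in $\Ann_{R}(I)$ with $|\Supp(g)|$ minimal, use the top-degree component $g_{s}$ with $s=n^{*}(g)$ and the compatibility of the order to kill one component of $g f_{t}$, contradicting minimality. Your proposal, as you yourself acknowledge in the final paragraph, does not prove the statement: its engine is Kaplansky's zero-divisor conjecture applied to the residue fields $\kappa(\mathfrak{p})$, and that input is exactly what is open for a general torsion-free $G$. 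What you do establish correctly is a conditional reduction: for \emph{commutative reduced} $R$, the argument that $ab\in\bigcap_{\mathfrak{p}}\mathfrak{p}=\mathfrak{N}(R)=0$ for every coefficient $a$ of $f$ and $b$ of $h$, hence $\mathfrak{c}(f)\,\mathfrak{c}(h)=0$ and some nonzero $b_{g}$ annihilates $f$, is sound, and together with the trivial converse (take $R=K$) it shows Conjecture~\ref{Conjecture I} for reduced commutative coefficients is equivalent to Kaplansky's conjecture for the same $G$. That is a legitimate observation, but it relocates the problem rather than solving it.

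Beyond the avowed dependence on an open conjecture, two steps of your outline are genuine gaps rather than deferrals. First, the nilpotent case: when every coefficient of the witness $h$ is nilpotent, reduction modulo $\mathfrak{N}(R)$ annihilates $h$ and your residue-field argument yields nothing; the proposed fix via $\mathrm{Ass}_{R}(R)$ and a filtration by powers of $\mathfrak{N}(R)$ is not carried out, and the analogy to McCoy is misleading --- McCoy's classical proof does not absorb nilpotents by dévissage but by precisely the minimal-support device, which in this generality requires an order (or at least a unique-product structure) on $G$, unavailable by the Promislow and Rips--Segev examples you cite. Second, the noncommutative case: prime quotients of a noncommutative ring need not embed in division rings, so ``invoke the division-ring form of Kaplansky's conjecture'' presupposes both an embedding that may not exist and a conjecture that is just as open; the claim that the nonabelian setting ``introduces no difficulty beyond it'' undersells this. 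In short: your reduction would stand as a remark accompanying the conjecture, but the only case anyone actually closes is the orderable one, where the paper's Theorem~\ref{th T-O} gives strictly more than your conditional route --- arbitrary (noncommutative) rings, arbitrary unfaithful left ideals, $G$ totally ordered but not necessarily Abelian --- with no appeal to any unresolved hypothesis.
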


The group-ring $R[G]$ is a typical example of $G$-graded rings. Hence, the above conjecture can be further generalized to the more general setting of graded rings and simultaneously to the setting of ideals rather than to that of elements:

\begin{conjecture} (generalized zero-divisor conjecture)\label{Conjecture II} If $I$ is an unfaithful left ideal of a $G$-graded ring $R$ with $G$ a torsion-free group, then there exists a (nonzero) homogeneous element $g \in R$ such that $gI=0$.
\end{conjecture}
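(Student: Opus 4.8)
The plan is to show that an \emph{arbitrary} annihilating element can be traded for a homogeneous one, after reducing the whole problem to a finitely generated subgroup where combinatorial control is available. Since $I$ is unfaithful, fix a nonzero $a\in R$ with $aI=0$ and let $S=\Supp(a)$ be its finite support. Among all nonzero $b$ with $bI=0$ I would work with one whose support $T=\Supp(b)$ is minimal in cardinality, and I would confine the entire argument to the $G_0$-graded subring $\bigoplus_{n\in G_0}R_{n}$, where $G_0\le G$ is the finitely generated subgroup generated by $\Supp(a)$ together with the supports of whatever finitely many elements of $I$ are needed to witness non-annihilation. Because $G$ is torsion-free, $G_0$ is a finitely generated torsion-free group, and the task becomes: prove that a minimal-support annihilator in this reduced $G_0$-graded setting is forced to be homogeneous.

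The key step is a leading-term peeling argument. Assume $b=\sum_{n\in T}b_{n}$ is a nonzero annihilator of minimal support with $|T|\ge 2$. For every $f\in I$ we have $bf=0$, so each homogeneous component of the product vanishes. I would try to locate a ``corner'' position $n_{0}\in T\cdot\Supp(f)$ that is hit by exactly one pair $(t,h)$ with $t\in T$, $h\in\Supp(f)$; the vanishing of that component then reads $b_{t}f_{h}=0$, and running this over all $f$ while exploiting the minimality of $|T|$ should force the leading homogeneous piece $b_{t}$ to annihilate $I$ on its own, producing the desired nonzero homogeneous $g=b_{t}$. Such an isolated corner exists whenever $G_0$ carries a total order compatible with multiplication (a bi-invariant order makes the top product unique), and more generally whenever $G_0$ is a \emph{unique product group}. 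Hence this scheme settles every case in which $G_0$ is orderable; in particular it covers the case where $G$ is abelian, since a finitely generated torsion-free abelian group is free abelian and therefore orderable.

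The hard part, and the step I expect to be the genuine obstacle, is precisely the passage from orderable (or unique-product) torsion-free groups to \emph{arbitrary} torsion-free groups. A finitely generated torsion-free group $G_0$ need be neither orderable nor a unique product group: the Rips--Segev type examples yield torsion-free groups in which every product set $T\cdot\Supp(f)$ has all of its elements represented in more than one way, so no corner relation $b_{t}f_{h}=0$ can be extracted and the support-minimization scheme stalls. In this regime the statement for $R=K[G]$ with $K$ a field is literally Kaplansky's still-open zero-divisor conjecture, so the fully general torsion-free case cannot be reached by the leading-term method alone; overcoming it appears to demand a genuinely new idea that replaces the role played here by ordering and unique products.
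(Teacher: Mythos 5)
Note first what the target actually is: the statement is labeled a \emph{conjecture} in the paper, and the paper does not prove it in full generality. What the paper proves is exactly the totally ordered case (Theorem \ref{th T-O}), and it observes that the general torsion-free case contains Kaplansky's zero-divisor conjecture and is open. Your proposal accurately reproduces this state of affairs: the orderable case you sketch is in substance the paper's argument (choose a nonzero annihilator of minimal support cardinality, then peel at an extreme degree using the order), and your diagnosis of the obstruction --- that a torsion-free group need be neither orderable nor a unique product group, by Rips--Segev type examples, so the method stalls precisely where Kaplansky's conjecture begins --- matches the paper's framing. Two of your ancillary moves are sound but dispensable: the reduction to a finitely generated subgroup $G_{0}$ is not needed (the paper works with the order on $G$ directly), and for abelian $G$ no finite generation is required since every torsion-free abelian group is totally orderable (Levi's theorem, recalled in Section \ref{Sec:Prelim}).

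There is, however, one step of your sketch that is stated incorrectly and would fail as written: the corner relation cannot, ``running over all $f$,'' force the leading piece $b_{t}$ to annihilate $I$ by itself. For each $f\in I$ the unique-corner argument kills only the single product $b_{t}f_{h}$ of the two extreme components; it says nothing about $b_{t}f_{h'}$ for the other $h'\in\Supp(f)$, so no amount of quantifying over $f$ yields $b_{t}I=0$. The repair is exactly McCoy's trick as executed in the paper's proof of Theorem \ref{th T-O}: with $b$ a minimal-support annihilator, $|\Supp(b)|\geqslant 2$, and $s=n^{\ast}(b)$, minimality gives $b_{s}\notin\Ann_{R}(I)$, so there is $f\in I$ with $b_{s}f\neq0$; choose $t\in\Supp(f)$ \emph{largest} with $bf_{t}\neq0$; the degree-$(s+t)$ component of $bf=0$ then forces $b_{s}f_{t}=0$ (here the two-sided compatibility of the order is used, as you anticipated), whence $0\neq bf_{t}\in\Ann_{R}(I)$ --- a two-sided ideal because $I$ is a left ideal --- with $|\Supp(bf_{t})|\leqslant|\Supp(b)|-1$, contradicting minimality. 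So the homogeneous annihilator is the minimal one itself, shown to have singleton support; its leading piece is never shown to annihilate $I$ along the way. Relatedly, your side claim that the scheme works ``more generally'' for unique product groups is doubtful in the graded setting: a unique product supplies one corner relation, but the support-shrinking iteration leans on the order (the maximal choice of $t$ and the vanishing of the top component), and the one-shot unique-product argument that settles group rings over domains has no evident analogue for an arbitrary $G$-graded ring.
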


In Theorem \ref{th T-O}, we prove the above conjectures (including non-commutative rings) in the case that the group $G$ involved in the grading is a totally ordered group which is not necessarily Abelian. In the proof of this theorem, we use an interesting idea whose special case was already used by McCoy to establish his result \cite{McCoy II}. In fact, in the course of two articles \cite{McCoy} and \cite{McCoy II}, McCoy proved two remarkable results on polynomial rings. His first result \cite[\S3, Theorems 2-3]{McCoy} asserts that every zero-divisor element of the polynomial ring $R[x_{1},\ldots,x_{d}]$ is annihilated by a nonzero element of the commutative ring $R$. Then in the second result \cite[Theorem I]{McCoy II}, he proves the same assertion for every unfaithful ideal of the polynomial ring $R[x_{1},\ldots,x_{d}]$ (note that in this result, the ring $R$ is not necessarily commutative).
Our theorem (Theorem \ref{th T-O}) also generalizes these results to the more general setting of graded rings. McCoy's theorems are then deduced as special cases of our general result (see Corollaries \ref{Coro TM 18} and \ref{Corollary III}).

Kaplansky's unit conjecture claims that if $G$ is a torsion-free group (not necessarily Abelian) and $K$ is a field, then the units of the group-ring $K[G]$ are precisely homogeneous elements of the form $r\epsilon_{x}$ where $0\neq r\in K$ and $x\in G$. Recently Gardam \cite{Gardam} proved that the group-ring $\mathbb{F}_{2}[F(6)]$ with $F(6)$ the 6th Fibonacci group, which is a torsion-free group and virtually abelian, is indeed a counterexample to the unit conjecture (in fact, he discovered a non-homogeneous invertible element in this group-ring with 21 homogeneous components). In light of this, Theorems \ref{Theorem ix} and \ref{coro ET new 20} are the most general positive results one can expect about the generalized version of the unit conjecture. In fact, in Lemma \ref{Theorem V}, we first show that every invertible element of a $G$-graded integral domain with $G$ a totally ordered group is homogeneous. This key result then enables us to give a complete characterization of invertible elements in graded commutative rings.  Theorems \ref{Theorem ix} and \ref{coro ET new 20} , in particular, tell us that the homogeneous components of an invertible element of a $G$-graded commutative ring (with $G$ a torsion-free Abelian group) form a co-maximal ideal and all distinct double products are nilpotent.

Kaplansky's idempotent conjecture claims that if $G$ is a torsion-free group (not necessarily Abelian) and $K$ is a field, then the group-ring $K[G]$ has no nontrivial idempotents. Despite the proof of some special cases (see e.g. \cite{Cliff}, \cite{Cohen-Mongomery}, \cite{Kanwar-Leroy}, \cite{Kirby} and \cite{Passman}), this conjecture, like the zero-divisor conjecture, is still open in general. Similarly above (i.e., zero-divisor conjecture), we first generalize it in the following way.   
 
\begin{conjecture}(generalized idempotent conjecture) Every idempotent of a $G$-graded ring  $R=\bigoplus\limits_{n\in G}R_{n}$ with $G$ a torsion-free group is contained in the base subring $R_{0}$ where $0$ is the identity element of $G$.
\end{conjecture}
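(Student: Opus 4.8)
The plan is to attack the statement in its stated generality by peeling off generality in stages and pinning down exactly where the difficulty concentrates. First I would reduce to the finitely generated case. An idempotent $e=\sum_{n}e_{n}$ has finite support $S=\{n\in G:e_{n}\neq 0\}$, and the subgroup $H=\langle S\rangle\leq G$ is finitely generated and torsion-free, being a subgroup of a torsion-free group. Every degree occurring in $e^{2}$ lies in $S+S\subseteq H$, so the relation $e^{2}=e$ already holds inside the $H$-graded subring $\bigoplus_{h\in H}R_{h}$, and the desired conclusion $e\in R_{0}$ only involves the components $e_{n}$ with $n\in S\subseteq H$. Thus I may assume from the outset that $G$ is a finitely generated torsion-free group. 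This reduction is routine and costs nothing.

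Second, I would settle the case in which $G$ is totally ordered — in particular the finitely generated torsion-free \emph{Abelian} case, where $G\cong\mathbb{Z}^{k}$ carries a total order compatible with the group operation. Here I would couple a leading-term analysis to Theorem \ref{th T-O}. Writing $m=\max S$ and $\ell=\min S$, comparison of the top component of $e^{2}=e$ in degree $2m$ gives $e_{m}^{2}=0$ when $m>0$, and symmetrically $e_{\ell}^{2}=0$ when $\ell<0$. In the commutative setting these square-zero relations say that the extremal components of $e$ are nilpotent, and I would promote this to a genuine reduction of the support as follows: since $e$ and $1-e$ are orthogonal, $e(1-e)=0$ with $e\neq 0,1$ shows that the principal ideal $R(1-e)$ has $\Ann\big(R(1-e)\big)=\Ann(1-e)\ni e\neq 0$, so it is an unfaithful ideal and Theorem \ref{th T-O} produces a nonzero \emph{homogeneous} $g$ with $g(1-e)=0$, that is, $ge_{0}=g$ and $ge_{n}=0$ for $n\neq 0$. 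Feeding this homogeneity back into $e^{2}=e$, the aim is to induct downward on $|S|$, stripping the extremal nilpotent components one at a time until only the degree-zero part remains; this is the route by which I expect the commutative torsion-free-Abelian case to fall out completely.

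The hard part — and the reason the statement is posed as a conjecture rather than proved as a theorem — is precisely the passage from $e_{m}^{2}=0$ to $e_{m}=0$ in the absence of commutativity or a domain hypothesis. For units and zero-divisors a single leading-coefficient comparison succeeds because an ordered grading enforces a domain-like cancellation on leading terms; this is the mechanism behind Lemma \ref{Theorem V} and Theorem \ref{th T-O}. An idempotent, however, manufactures a \emph{nilpotent} top component, and two independent obstacles block its elimination in full generality. On one hand, when $R$ is noncommutative the crucial step above already breaks: $e(1-e)=0$ no longer forces $e\cdot r(1-e)=0$, so $e$ need not annihilate the left ideal $R(1-e)$ and Theorem \ref{th T-O} cannot be invoked to homogenize. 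On the other hand, a finitely generated torsion-free \emph{non-Abelian} group need not admit any total order, so the leading-term argument of stage two never even starts.

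Consequently I expect this plan to yield a complete proof only in the commutative torsion-free-Abelian case, where the homogeneous annihilator $g$ and the control of nilpotents can be exploited as above. The fully general statement, with $R$ arbitrary and $G$ an arbitrary torsion-free group, coincides with the still-open core of Kaplansky's idempotent conjecture, and I would not expect an unconditional proof; the honest outcome in full generality is the structural reduction of stage one together with the equivalence, for Abelian $G$, between this conjecture and the absence of nontrivial idempotents in $\mathbb{Q}[G]$.
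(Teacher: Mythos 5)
You are right about scope: the statement is posed in the paper as an open conjecture, and what the paper actually establishes is the commutative case (Theorem \ref{Lemma T-K}, extended in Theorems \ref{Theorem i-T} and \ref{Theorem 3-III}); your stage-one reduction to a finitely generated subgroup is sound and is the same device the paper uses in Theorem \ref{Theorem i-T}, though it is not needed for the torsion-free Abelian case, since every torsion-free Abelian group is totally orderable regardless of rank. Your diagnosis of the noncommutative and non-orderable obstructions is also accurate. The problem is stage two, where your proof of the commutative torsion-free-Abelian case has a genuine gap at its central step.

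The step ``induct downward on $|S|$, stripping the extremal nilpotent components one at a time'' does not run as stated. From $e^{2}=e$ you obtain only that the extremal components $e_{m}$ (for $m>0$) and $e_{\ell}$ (for $\ell<0$) are square-zero; but $e-e_{m}$ is not an idempotent, so there is no smaller-support idempotent to which an induction hypothesis could apply. Nor does the homogeneous annihilator help: a nonzero homogeneous $g$ with $g(1-e)=0$ from Theorem \ref{th T-O} yields $ge_{n}=0$ for all $n\neq0$, i.e.\ that each $e_{n}$ is a zero-divisor, and no mechanism is offered that upgrades this to $e_{n}$ nilpotent, let alone zero. What closes the argument is precisely the two ingredients in the paper's proof of Theorem \ref{Lemma T-K} that your sketch omits. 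First, the minimal primes (hence the nilradical $\mathfrak{N}$) of $R$ are \emph{graded} ideals (Corollary \ref{Coro 14-ondort}); since an idempotent maps to $0$ or $1$ in each domain $R/\mathfrak{p}$, every component of nonzero degree lies in every graded minimal prime and is therefore nilpotent --- equivalently, your leading-term analysis can be iterated, but only in the reduced $G$-graded quotient $R/\mathfrak{N}$, which requires gradedness of $\mathfrak{N}$. Second, this only shows $e\equiv e_{0}\pmod{\mathfrak{N}}$; to conclude $e\in R_{0}$ one must lift $e_{0}+\mathfrak{N}_{0}$ to an idempotent of $R_{0}$ and invoke the rigidity fact that two idempotents congruent modulo the Jacobson radical are equal (the paper cites \cite[Lemma 3.8]{Tarizadeh-Sharma}). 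Without the first ingredient your stripping has no ambient induction, and without the second you learn only that $e$ agrees with a degree-zero element up to nilpotents, not that $e$ itself lies in $R_{0}$.
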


The second version of the generalized idempotent conjecture reads as follows:

\begin{conjecture} Every idempotent of a $G$-graded ring  $R=\bigoplus\limits_{n\in G}R_{n}$ with $G$ an Abelian group is contained in the subring $\bigoplus\limits_{n\in H}R_{n}$ where $H$ is the torsion subgroup of $G$.
\end{conjecture}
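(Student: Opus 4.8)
The plan is to reduce the statement to a totally ordered grading group and then drive everything by a leading-coefficient computation. First I would replace the Abelian group $G$ by the quotient $\bar{G}=G/H$, where $H$ is its torsion subgroup. Since $G$ is Abelian and $H$ is exactly its torsion, $\bar{G}$ is torsion-free, and coarsening the grading via $R_{\bar{n}}=\bigoplus_{g\in n+H}R_{g}$ turns $R$ into a $\bar{G}$-graded ring whose degree-zero component is precisely $\bigoplus_{h\in H}R_{h}$. Thus the assertion is equivalent to the first version of the conjecture for the torsion-free group $\bar{G}$: every idempotent lies in $R_{\bar{0}}$. Since every torsion-free Abelian group admits a total order compatible with its addition, I would then fix such an order on $\bar{G}$, placing us in the totally ordered setting of Theorem \ref{th T-O} and Lemma \ref{Theorem V}. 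None of these reductions uses commutativity of $R$.

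In the commutative case the argument is now complete: the quoted result that every idempotent of a $G$-graded commutative ring with $G$ torsion-free Abelian is homogeneous of degree zero applies verbatim to $\bar{G}$ and places the idempotent in $R_{\bar{0}}=\bigoplus_{h\in H}R_{h}$. For a general (possibly non-commutative) ring I would try to reprove homogeneity directly from the total order. Writing an idempotent as $e=\sum_{g\in S}e_{g}$ with finite support $S$ and each $e_{g}\neq 0$, and comparing top-degree parts in $e^{2}=e$, one finds that if $g^{+}=\max S>0$ then $2g^{+}\notin S$, forcing $e_{g^{+}}^{2}=0$; symmetrically $e_{g^{-}}^{2}=0$ when $g^{-}=\min S<0$. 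A one-term idempotent must satisfy $2g=g$, hence $g=0$ by torsion-freeness, so it lands in $R_{\bar{0}}$.

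The hard part, and the step I expect to block the general statement, is to pass from the nilpotency relation $e_{g^{+}}^{2}=0$ to $e_{g^{+}}=0$ without commutativity. There is no reduced-quotient reduction available, since in a non-commutative ring the nilpotent elements need not form an ideal, and the leading terms of an idempotent can genuinely cancel. In fact this obstruction is not a gap in technique but a real feature of the problem as stated: grading $M_{2}(K)$ by $\deg E_{ij}=i-j$, the element $e=E_{11}+E_{21}$ is idempotent with $\deg E_{11}=0$ and $\deg E_{21}=1$, so that $e\notin R_{0}=\bigoplus_{h\in H}R_{h}$ (here $G=\mathbb{Z}$, $H=0$). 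Hence the leading-coefficient engine cannot close and the literal non-commutative statement fails. The only realistic route to the conjecture is therefore to reinstate a hypothesis that forbids this cancellation, commutativity being the natural one, under which the plan above succeeds, or else to restrict to classes of graded rings, such as those with one-dimensional homogeneous components as in the group-ring setting, where the leading terms of an idempotent cannot vanish.
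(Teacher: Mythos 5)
Your proof of the commutative case is correct, but it takes a genuinely different---and in fact shorter---route than the paper. The paper's proof of Theorem \ref{Theorem i-T} first restricts to the subring graded by the subgroup $K\subseteq G$ generated by the finite set $\Supp(f)$, so that $G$ may be assumed finitely generated; it then invokes the fundamental theorem of finitely generated Abelian groups to split off a torsion-free direct complement $F$ of the torsion subgroup, regrades $R$ by $F$ via the projection, and applies Theorem \ref{Lemma T-K}. You instead coarsen the grading along the canonical surjection $G\rightarrow G/H$: since $H$ is the full torsion subgroup of the Abelian group $G$, the quotient $G/H$ is torsion-free for \emph{arbitrary} $G$ (no finiteness needed), the coarsened components are $S_{\bar{n}}=\bigoplus\limits_{g\in n+H}R_{g}$, and $S_{\bar{0}}=\bigoplus\limits_{h\in H}R_{h}$, so Theorem \ref{Lemma T-K} finishes in one step. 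This avoids both the reduction to the finitely generated case and the structure theorem, at no cost; both arguments funnel through the same key input, Theorem \ref{Lemma T-K}, whose proof is where commutativity is genuinely used (graded minimal primes, graded nilradical, lifting idempotents), and your regrading step is indeed commutativity-free, exactly as you note.

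Your discussion of the noncommutative case is also sound and goes beyond the paper. The grading of $M_{2}(K)$ by $\dg(E_{ij})=i-j$ is a valid $\mathbb{Z}$-grading with $R_{-1}=KE_{12}$, $R_{0}=KE_{11}+KE_{22}$, $R_{1}=KE_{21}$, and $e=E_{11}+E_{21}$ satisfies $e^{2}=e$ (since $E_{21}E_{11}=E_{21}$ while $E_{11}E_{21}=E_{21}E_{21}=0$) yet $e\notin R_{0}$; as $\mathbb{Z}$ is torsion-free, $H=0$ and this refutes the conjecture as literally stated for general rings. This is consistent with the paper, which claims the conjecture only in the commutative case, but the paper itself offers no such counterexample; your example shows concretely that the commutativity hypothesis in Theorems \ref{Lemma T-K} and \ref{Theorem i-T} (implicit in their statements, essential in their proofs) cannot be dropped, and that any noncommutative salvage must exclude degenerate homogeneous components of the kind occurring here.
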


Our results completely settle down the above conjectures in the commutative case (see Theorems \ref{Lemma T-K}, \ref{Theorem i-T} and \ref{Theorem 3-III}). In fact, in Theorem \ref{Theorem 3-III}, we prove a general result which asserts that if $M$ is a commutative monoid, then every idempotent element of an $M$-graded commutative ring $R=\bigoplus\limits_{m\in M}R_{m}$ is contained in the subring $\bigoplus\limits_{m\in\widetilde{M}}R_{m}$ where we call $\widetilde{M}=\{x\in M : \exists n\geqslant1, \exists y\in M, nx+y=y\}$ the quasi-torsion submonoid of $M$. We prove this theorem in three main steps. In the first step (Theorem \ref{Lemma T-K}), we prove a key result which asserts that every idempotent element of a $G$-graded commutative ring $R=\bigoplus\limits_{n\in G}R_{n}$ with $G$ a torsion-free Abelian group is contained in the base subring $R_{0}$. 
In the second step (Theorem \ref{Theorem i-T}), by reducing the problem to the finitely generated case and then using the fundamental theorem of finitely generated Abelian groups and most importantly  using the first step, we show that every idempotent element of a $G$-graded commutative ring $R=\bigoplus\limits_{n\in G}R_{n}$ with $G$ an Abelian group is contained in the subring $\bigoplus\limits_{h\in H}R_{h}$ where $H$ is the torsion subgroup of $G$. Finally in the third step (Theorem \ref{Theorem 3-III}), by passing to the grading with the Grothendieck group and then using the second step, we complete the proof. As a consequence, we obtain that for any commutative ring $R$ and for any commutative monoid $M$, then every idempotent element of the monoid-ring $R[M]$ is contained in the subring $R[\widetilde{M}]$. 

Next, we propose the following general problem (the group $G$ is not necessarily Abelian, and the ring involved is not necessarily commutative):

\begin{conjecture} The Jacobson radical of every $G$-graded ring with $G$ a torsion-free group is a graded ideal.
\end{conjecture}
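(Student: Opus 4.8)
The plan is to split the problem into the \emph{orderable} case, which I expect to be provable outright with the leading-term technology already developed for Theorem \ref{th T-O}, and the \emph{non-orderable torsion-free} case, which I expect to be the genuine obstacle and to be essentially equivalent in strength to an open case of Kaplansky's idempotent conjecture. Throughout I would use the characterization of the Jacobson radical $J(R)$ as the largest two-sided ideal $I$ with $1+I$ consisting of units, and reduce the assertion ``$J(R)$ is a graded ideal'' to the pointwise statement that every homogeneous component of a radical element is again radical. Because each element of $R$ has finite support, this pointwise statement only involves the finitely generated subgroup of $G$ generated by that support together with the finitely many homogeneous elements that occur; so the first reduction is to $G$ finitely generated (and torsion-free).

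For the case where $G$ carries a total order compatible with its group structure, I would run a Bergman-type extremal-term argument, of exactly the kind that underlies Theorem \ref{th T-O}. Given $r=r_{g_{1}}+\cdots+r_{g_{k}}\in J(R)$ with $g_{1}<\cdots<g_{k}$, and any $x\in R$, the element $1-xr$ is a unit whose inverse again has finite support; comparing the highest and lowest terms in the order forces the extremal homogeneous components of $r$ to be quasi-regular, and Theorem \ref{th T-O} supplies the homogeneous annihilators needed to separate the interaction of the middle terms. An induction on the length of the support then yields $r_{g_{i}}\in J(R)$ for every $i$. This would simultaneously recover Bergman's $\mathbb{Z}$-graded theorem, extend the paper's torsion-free Abelian Jacobson radical result to arbitrary, possibly non-commutative, rings, and settle the conjecture for every orderable torsion-free $G$.

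The hard part will be the non-orderable torsion-free groups, for which the ordered-case argument has no analogue, since a general finitely generated torsion-free group is neither Abelian nor left-orderable and need not embed, grading-compatibly, into an orderable overgroup. Here I would instead seek a non-Abelian analogue of the equivalences that, for Abelian $G$, tie gradedness of $J(R)$ for all $G$-graded rings to the absence of nontrivial idempotents in $\mathbb{Q}[G]$. Concretely, I expect that controlling the non-orderable case reduces the Jacobson radical conjecture to the (generalized) idempotent conjecture for $\mathbb{Q}[G]$ --- that is, to an open case of Kaplansky's conjectures --- rather than admitting an unconditional elementary proof. In view of this, and of Gardam's counterexample \cite{Gardam} showing that the companion unit conjecture already fails, the realistic target is an \emph{unconditional} theorem for all orderable torsion-free $G$ together with a \emph{conditional} reduction of the remaining cases to the idempotent conjecture; I would regard establishing that reduction as the decisive step.
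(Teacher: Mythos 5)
Your proposal does not prove the statement, but that by itself is not the defect: the statement is one of the paper's open conjectures, and the paper itself establishes only the commutative case (Theorem \ref{Theorem II}) together with the Abelian characterization (Theorem \ref{Bergman's th charact}). The problem is that the half you claim \emph{unconditionally} --- all orderable torsion-free $G$, arbitrary rings --- rests on a step that fails. In a graded ring with zero-divisors one only has the inequality $n^{\ast}(fg)\leqslant n^{\ast}(f)+n^{\ast}(g)$: extremal homogeneous components of a product can cancel, so comparing the highest and lowest terms of $1-xr$ and of its inverse forces nothing about the extremal components of $r$. Remark \ref{remark i nice example} is the paper's own warning: in the reduced $\mathbb{Z}$-graded ring $(\mathbb{Z}/6)[x,x^{-1}]$ the unit $2x+3x^{-1}$ is non-homogeneous, so degree bookkeeping on units and their inverses is inconclusive outside domains. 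This failure is precisely why Bergman's theorem \cite{Bergman} is regarded as hard already for $G=\mathbb{Z}$; your sketch, if it worked, would reprove it (and strictly more) in a few lines. The appeal to Theorem \ref{th T-O} is also a non sequitur: that theorem produces homogeneous annihilators of \emph{unfaithful} ideals, whereas an element of $\mathfrak{J}(R)$ need not be a zero-divisor at all, so there are no ``homogeneous annihilators to separate the middle terms''. Note how the paper's proof of the commutative case quarantines the leading-term argument where it is valid: it shows $\mathfrak{J}(R)=\bigcap\limits_{\mathfrak{m}\in\Max(R)}\mathfrak{m}^{\ast}$, invokes Corollary \ref{Coro 20 twenty yirmi} to know each $\mathfrak{m}^{\ast}$ is prime, and only then applies Lemma \ref{Theorem V} inside the graded \emph{domain} $R/\mathfrak{m}^{\ast}$. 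Since Corollary \ref{Coro 20 twenty yirmi} rests on Lemma \ref{Lemma 2-iki}, i.e. on commutativity, this route does not extend to the non-commutative ordered case you claim to settle, and you offer no substitute for it.

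Two further points. Your reduction to finitely generated $G$ is incomplete as stated, though repairable: to certify $r_{g}\in\mathfrak{J}(R)$ one must quasi-invert $1+xr_{g}$ against arbitrary $x\in R$, so the subgroup $H$ must be allowed to depend on $x$, and one needs the lemma $\mathfrak{J}(R)\cap S\subseteq\mathfrak{J}(S)$ for the subring $S=\bigoplus\limits_{h\in H}R_{h}$ (this does hold, via the canonical projection $R\rightarrow S$, which is a left $S$-module retraction, but it must be stated and used: membership in the Jacobson radical, unlike idempotency in Theorem \ref{Theorem i-T}, is not intrinsic to a subring). Finally, the non-orderable half of your plan is a program, not a proof: no reduction to the idempotent conjecture for $\mathbb{Q}[G]$ is actually constructed, and the paper's equivalences of that kind (Theorem \ref{Bergman's th charact}) are proved only for Abelian $G$, where torsion-free already implies orderable by Levi's theorem, so they give no purchase on the genuinely non-Abelian non-orderable case. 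As it stands, the conjecture remains exactly as open as the paper leaves it.
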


Then we prove the above conjecture in the commutative case (see Theorem \ref{Theorem II}). This theorem considerably generalizes Bergman's theorem \cite[Corollary 2]{Bergman} which was already proved in the literature by difficult methods (see e.g. \cite{Bergman,Cohen-Mongomery,
Ilic,Kelarev,Kelarev-Okininski,Mazurek}). 

In \cite[Lemma 1]{Armendariz}, Armendariz obtained an interesting result on reduced polynomial rings (which is closely related to McCoy's theorem). His result asserts that in a reduced polynomial ring, if a polynomial $f$ annihilates another polynomial $g$ (i.e. $fg=0$) then each coefficient of $f$ annihilates every coefficient of $g$. In Theorem \ref{Theorem vii}, we prove a more general result which asserts that if $I$ is a graded radical ideal of a $G$-graded commutative ring $R$ with $G$ a torsion-free Abelian group and $J$ an arbitrary ideal of $R$, then the colon ideal $I:_{R}J=\{r\in R: rJ\subseteq I\}$ is a graded ideal. 
Our theorem vastly generalizes Armendariz' result on polynomial rings to the more general setting of graded rings and simultaneously to arbitrary (not necessarily reduced) rings. Next, in Theorem \ref{Corollary onbes-fifteen}, we codify some important properties of monoid-rings.  

The above results also allow us to show that an Abelian group $G$ is torsion-free if and only if the Jacobson radical of every $G$-graded ring is a graded ideal, or equivalently, the idempotents of every $G$-graded ring are contained in the base subring, or equivalently, the group-ring $\mathbb{Q}[G]$ has no nontrivial idempotents (see Theorem \ref{Bergman's th charact}). Finally, we prove that if a $G$-graded commutative ring $R$ with $G$ a torsion-free Abelian group has a non-zero-divisor homogeneous element of nonzero degree, then the nilradical and the Jacobson radical of $R$ are the same (see Theorem \ref{Lemma 3 NZD} and Corollaries \ref{Coro 2 Gro}, \ref{Coro 3 nice}). This result also generalizes several important results in the literature. All of the results of this article are supplemented with examples showing that some conditions on the grading group $G$ is required. In fact, these examples heavily depend on torsion which highlight the necessity of the torsion-free assumption. In proving the main results of this article, various ideas and methods that were used are: the Grothendieck group, changing the grading, homogeneity of invertible elements of graded domains, lifting idempotents modulo the nilradical, using the whole strength of totally ordered groups and the trick of reducing problem to the finitely generated case.

\section{Preliminaries}\label{Sec:Prelim}

In this section, we recall some necessary background, for the convenience of the reader. The subject of this article is mainly commutative algebra, but some results also apply to non-commutative rings (see e.g. Theorem \ref{th T-O}).

Recall that the Grothendieck group of a commutative monoid $M$ is constructed in the following way. We first define a relation over the set $M\times M$ as $(a,b)\sim(c,d)$ if there exists some $m\in M$ such that $(a+d)+m=(b+c)+m$. It can be seen that it is an equivalence relation. Here we denote the equivalence class containing of an ordered pair $(a,b)$ simply by $[a,b]$, and we denote by $G=\{[a,b]: a,b\in M\}$  the set of all equivalence classes obtained by this relation. Then the set $G$ by the operation $[a,b]+[c,d]=[a+c,b+d]$ is an Abelian group. Indeed, $[0,0]$ is the identity element of $G$ where $0$ is the identity of $M$, and for each $[a,b]\in G$ its inverse is $[b,a]$. This group $G$ is called the \emph{Grothendieck group of $M$}. Note that in the above construction, the commutativity of the monoid $M$ plays a vital role. We have a canonical morphism of monoids $M\rightarrow G$  which is given by $m\mapsto[m,0]$. This map is injective if and only if $M$ has the cancellation property. For example, the Grothendieck group of the additive monoid of natural numbers $\mathbb{N}=\{0,1,2,\ldots\}$ is called the additive group of integers and is denoted by $\mathbb{Z}$. 

Let $R=\bigoplus\limits_{m\in M}R_{m}$ be an $M$-graded ring with $M$ a monoid.  Every nonzero element $0\neq r\in R_{m}$ is called a homogeneous element of $R$ of degree $m\in M$ and written $\dg(r)=m$. If $r,r'\in R$ are homogeneous elements with $rr'\neq0$, then $rr'$ is homogeneous and $\dg(rr')=\dg(r)+\dg(r')$.
Whenever we write $f= \sum\limits_{m\in M} r_{m} \in R$, if it is not stated, then it should be understood that $r_m\in R_m$ for all $m\in M$. We also recall changing the grading on a given graded ring, as we shall need it later on. Let $\phi:M\rightarrow M'$ be morphism of  monoids. Then $R=\bigoplus\limits_{d\in M'}S_{d}$ can be viewed as an $M'$-graded ring with the homogeneous components $S_{d}=\sum\limits_{\substack{m\in M,\\
\phi(m)=d}}R_{m}$ (it is clear that if $d\in M'$ is not in the image of $\phi$, then $S_{d}=0$).

By a totally (linearly) ordered monoid we mean a monoid $M$ equipped with a total ordering $<$ such that its operation is compatible with its ordering, i.e. if $a<b$ for some $a,b\in M$, then $a+x \leqslant b+x$ for all $x\in M$. If moreover, $M$ has the cancellation property then from $a<b$ we get that $a+x<b+x$.    

Recall that for a given ring $R$, then an element $x$ of an $R$-module $M$ is called a \emph{torsion element} if $ax=0$ for some non-zero-divisor $a\in R$. It can be seen that $T(M)=\{x\in M:\exists a\in R\setminus Z(R), ax=0\}$, the set of torsion elements,  is an $R$-submodule of $M$ (note that $R$ does not need to be an integral domain here) where $Z(R)$ denotes the set of zero-divisors of $R$. The module $T(M)$ is called the \emph{torsion submodule} or the \emph{torsion part of $M$}.
An $R$-module $M$ is called \emph{torsion-free} if $T(M)=0$, and \emph{torsion} if $T(M)=M$. For example, every nonzero ideal of an integral domain is a torsion-free module. 
Every flat module (and hence every free or projective module) is torsion-free. If $A$ is the total ring of fractions of a ring $R$ and $B$ is an extension ring of $A$, then as $R$-modules we have $T(B/R)=A/R$. In particular, $A/R$ is a torsion $R$-module. 
 
If $G$ is an Abelian group then as a $\mathbb{Z}$-module, $T(G)=\{x\in G:\exists n\geqslant1, nx=0\}$ is the torsion subgroup of $G$ (the subgroup of elements of finite order). 

It can be seen that every totally ordered group is torsion-free (every non-identity element is of infinite order). It is very important to note that the reverse implication is also true in the commutative case: an Abelian group is torsion-free if and only if it is a totally ordered group. Its proof can be found in \cite[Theorem 6.31]{Lam} and \cite[\S3]{Levi}. Hence, all of the results (of this article) remain true if one replaces ``totally ordered Abelian group'' by ``torsion-free Abelian group'' (or vice versa). More generally, it can be shown that a commutative monoid $M$ with the cancellation property is a totally ordered monoid if and only if the Grothendieck group of $M$ is a torsion-free group (see \cite[\S 2.12, Theorem 22]{Northcott}).

If $M$ is a finitely generated module over a principal ideal domain $R$, then by the structure theorem over PIDs, we have the factorization $M=T(M)\oplus F$ where $F$ is a free $R$-submodule of $M$ of finite rank (i.e., $F\simeq R^{d}$ as $R$-modules for some natural number $d\geqslant0$). In particular, if $G$ is a finitely generated Abelian group then we have the decomposition $G=T(G)\oplus F$ where $F$ is a finitely generated torsion-free subgroup of $G$. If the annihilator of a module over a ring is a nonzero ideal of the ring, then the module is called unfaithful.

\section{Grading by a torsion-free Abelian group}

In this section, we present some important facts about gradings by torsion-free Abelian groups that will be required in subsequent sections. 

\begin{lemma}\label{Lemma 2-iki} Let $G$ be a torsion-free Abelian group and $I$ a graded proper ideal of a $G$-graded ring $R=\bigoplus\limits_{x\in G}R_{x}$ with the property that whenever $rr'\in I$ for some homogeneous elements $r,r'\in R$, we have $r\in I$ or $r'\in I$. Then $I$ is a prime ideal of $R$.
\end{lemma}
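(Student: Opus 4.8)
The plan is to exploit the hypothesis that $G$ is torsion-free Abelian, which by the result quoted in Section~\ref{Sec:Prelim} (see \cite[Theorem 6.31]{Lam}) means that $G$ admits a total ordering $<$ compatible with its group operation; since a group is cancellative, this ordering is moreover \emph{strictly} compatible, i.e. $x<x'$ implies $x+y<x'+y$ for every $y\in G$. With such an order fixed, I would argue by contradiction using a ``leading-component'' argument. Suppose $ab\in I$ but $a\notin I$ and $b\notin I$. Writing $a=\sum_{x}a_{x}$ and $b=\sum_{y}b_{y}$ in homogeneous components, since $I$ is a \emph{graded} ideal the condition $a\notin I$ forces at least one component $a_{x}$ to lie outside $I$ (otherwise $a=\sum_x a_x\in I$), and similarly for $b$.

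Concretely, set $x_{0}=\max\{x\in G: a_{x}\notin I\}$ and $y_{0}=\max\{y\in G: b_{y}\notin I\}$; these maxima exist because only finitely many components are nonzero and we have just seen the sets are nonempty. Both $a_{x_{0}}$ and $b_{y_{0}}$ are nonzero homogeneous elements (as $0\in I$), hence genuine homogeneous elements in the sense of the paper's convention, and neither lies in $I$. I would then inspect the homogeneous component of $ab$ in degree $x_{0}+y_{0}$, namely $\sum_{x+y=x_{0}+y_{0}}a_{x}b_{y}$. In any such term with $x>x_{0}$ we have $a_{x}\in I$ by maximality of $x_{0}$, so $a_{x}b_{y}\in I$; and in any term with $x<x_{0}$, strict compatibility of the order gives $y>y_{0}$, whence $b_{y}\in I$ and again $a_{x}b_{y}\in I$. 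The unique remaining term is $x=x_{0}$, which by cancellation forces $y=y_{0}$. Therefore the degree-$(x_{0}+y_{0})$ component of $ab$ equals $a_{x_{0}}b_{y_{0}}$ plus a sum of elements of $I$. Since $ab\in I$ and $I$ is graded, this whole component lies in $I$, so $a_{x_{0}}b_{y_{0}}\in I$. Now the homogeneity hypothesis applies to the product of the homogeneous elements $a_{x_{0}}$ and $b_{y_{0}}$ and yields $a_{x_{0}}\in I$ or $b_{y_{0}}\in I$, contradicting the choice of $x_{0},y_{0}$. Hence $a\in I$ or $b\in I$, and since $I$ is proper it is prime.

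The substance of the argument, and the only place torsion-freeness is really used, is the isolation of a single ``cross term'' $a_{x_{0}}b_{y_{0}}$ that survives modulo $I$ in a chosen degree. This is exactly what the total order buys us, and I expect the main point to be verifying that $x>x_{0}$ and $x<x_{0}$ each push the complementary index strictly past $y_{0}$: this relies on the \emph{strict} compatibility coming from cancellation in a group — the step that would fail for a merely totally ordered monoid without cancellation, or for a group with torsion (where no such order exists at all). Note that commutativity of $R$ is not needed: the component formula $(ab)_{g}=\sum_{x+y=g}a_{x}b_{y}$ respects the left–right order of the factors, so the argument establishes the elementwise (completely prime) property $ab\in I\Rightarrow a\in I$ or $b\in I$ in general.
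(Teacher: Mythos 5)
Your proof is correct and complete. The paper offers no argument of its own here --- its proof reads only ``It is an interesting exercise (and well-known)'' --- and what you give is precisely the standard argument it implicitly invokes: order the torsion-free Abelian group totally (possible by the fact recalled in Section~\ref{Sec:Prelim}), take the maximal degrees $x_{0},y_{0}$ at which components of $a$ and $b$ escape $I$, and use gradedness of $I$ together with strict compatibility of the order (automatic in a group, by cancellation) to isolate $a_{x_{0}}b_{y_{0}}$ in degree $x_{0}+y_{0}$ modulo $I$; your side checks (that $a_{x_{0}},b_{y_{0}}$ are nonzero, hence genuinely homogeneous, and that commutativity of $R$ is never used, so one in fact gets the completely prime property) are all sound. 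This same leading-component device is what drives the paper's own proofs of Theorem~\ref{th T-O} and Theorem~\ref{Theorem vii}, so your argument is exactly in the paper's spirit.
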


\begin{proof} It is an interesting exercise (and well-known).  
\end{proof} 

If $M$ is a totally ordered monoid (by an ordering $<$) with the cancellation property, then it can be seen that its Grothendieck group is a totally ordered group whose order is defined by $[a,b]<[c,d]$ if $a+d<b+c$ in $M$. In this regard, see also Example \ref{Example 3-uch}. Then the above lemma leads us to the following observation:

\begin{corollary}\label{Coro onalti 16} Let $M$ be a totally ordered cancellative monoid and $I$ a graded proper ideal of an $M$-graded ring $R$ with the property that whenever $rr'\in I$ for some homogeneous elements $r,r'\in R$, we have $r\in I$ or $r'\in I$. Then $I$ is a prime ideal of $R$.
\end{corollary}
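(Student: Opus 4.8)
The plan is to deduce this corollary from Lemma \ref{Lemma 2-iki} by passing from the $M$-grading to a grading by the Grothendieck group $G$ of $M$. Since $M$ is a totally ordered cancellative (commutative) monoid, the fact recorded just above shows that $G$ is a totally ordered group, and since every totally ordered group is torsion-free (as noted in Section \ref{Sec:Prelim}), $G$ is a torsion-free Abelian group. This is exactly the hypothesis on the grading group demanded by Lemma \ref{Lemma 2-iki}, so it suffices to realize $R$ as a $G$-graded ring in which $I$ remains a graded proper ideal enjoying the same factorization property on homogeneous elements.

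To do this, I would use the canonical monoid morphism $\phi : M \to G$, $m \mapsto [m,0]$, and change the grading along $\phi$ as recalled in Section \ref{Sec:Prelim}: this turns $R$ into the $G$-graded ring $R = \bigoplus_{x\in G} S_x$ with $S_x = \sum_{\phi(m)=x} R_m$. The key point is that $M$ is cancellative, so $\phi$ is injective; hence each fiber $\phi^{-1}(x)$ is either empty or a singleton, and consequently $S_x = R_m$ when $x=\phi(m)$ and $S_x = 0$ otherwise. In particular the nonzero homogeneous components of the $G$-grading are literally the nonzero homogeneous components of the $M$-grading, merely re-indexed by $G$. Therefore the homogeneous elements of $R$ are the same for both gradings.

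From this the remaining verifications are immediate. Because $I$ is $M$-graded we have $I = \bigoplus_{m\in M}(I\cap R_m)$; re-indexing via $\phi$ gives $I = \bigoplus_{x\in G}(I\cap S_x)$, so $I$ is a graded ideal for the $G$-grading, and it is still proper. Since the homogeneous elements coincide for the two gradings, the hypothesis that $rr'\in I$ forces $r\in I$ or $r'\in I$ (for homogeneous $r,r'$) holds verbatim in the $G$-graded setting. Lemma \ref{Lemma 2-iki} now applies and yields that $I$ is a prime ideal of $R$, as required.

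There is no serious obstacle here; the corollary is essentially a translation of Lemma \ref{Lemma 2-iki} through the embedding $M\hookrightarrow G$. The only point that genuinely needs the hypotheses is the passage to $G$: cancellativity is what makes $\phi$ injective (so that homogeneity is preserved under the change of grading), while the total ordering is what guarantees $G$ is torsion-free so that Lemma \ref{Lemma 2-iki} is applicable. It is worth recording explicitly why injectivity of $\phi$ forces each $S_x$ to be a single $R_m$, since that is the hinge on which the whole reduction turns.
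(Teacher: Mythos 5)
Your proof is correct and follows essentially the same route as the paper: pass to the Grothendieck group $G$ of $M$ (totally ordered, hence torsion-free Abelian), change the grading along the injective canonical map $M\rightarrow G$ so that homogeneous components are preserved, and invoke Lemma \ref{Lemma 2-iki}. You merely spell out in more detail the points the paper leaves implicit (that $I$ stays graded and proper, and that the factorization hypothesis transfers verbatim).
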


\begin{proof} Let $G$ be the Grothendieck group of $M$ which is a torsion-free Abelian group. By changing the grading, using the canonical morphism of monoids $M\rightarrow G$, then $R$ can be viewed as a $G$-graded ring. Since $M$ is cancellative, the above map is injective and so the homogeneous components remain the same with this new grading. 
Now the assertion follows from Lemma~\ref{Lemma 2-iki}. 
\end{proof}

\begin{example}\label{Example 3-uch} Note that even if $M$ is a torsion-free monoid (i.e. every non-identity element is of infinite order) with the cancellation property, then the Grothendieck group of $M$ is not necessarily a torsion-free group (and hence is not necessarily a totally ordered group). For example, let $M$ be the submonoid of the additive group $\mathbb{Z}\times(\mathbb{Z}/n)$ (with $n\geqslant2$) which is generated by $x=(1,0)$ and $y=(1,1)$. Then $M=\mathbb{N}x+\mathbb{N}y$ has the cancellation property, because every submonoid of a group has this property. It can be easily seen that $M$ is torsion-free. Let $G=\{[r,s]: r,s\in M\}$ be the Grothendieck group of $M$. Then we obtain an injective morphism of groups $f:G\rightarrow\mathbb{Z}\times(\mathbb{Z}/n)$ which is given by $[r,s]\mapsto r-s$. Then we show that it is also surjective. Take $(a,b) \in\mathbb{Z}\times(\mathbb{Z}/n)$ then $(a,b)=(a,0)+(0,b)$. If $a\geqslant0$ then $[ax,0] \in G$ is mapped into $(a,0)$. But if $a<0$ then $[0,-ax] \in G$ is mapped into $(a,0)$.  Clearly $[y,x] \in G$ is mapped into $y-x=(0,1)$. Thus if  $b\geqslant0$ then $b[y,x] \in G$ is mapped into $(0,b)$. If $b<0$ then $-b[x,y] \in G$ is mapped into $-b(0,-1)=(0,b)$. But $G\simeq\mathbb{Z}\times(\mathbb{Z}/n)$ is not a torsion-free group.
\end{example}

Recall that if $I$ is an ideal of an $M$-graded ring $R$ (with $M$ a monoid), then by $I^{\ast}$ we mean the ideal of $R$ generated by all homogeneous elements of $R$ contained in $I$. In fact, $I^{\ast}$ is the largest graded ideal of $R$ which is contained in $I$. If $\mathfrak{p}$ is a prime ideal of an $M$-graded ring $R$, then $\mathfrak{p}^{\ast}$ is a graded proper ideal of $R$ and has the above property, i.e. if whenever $ab\in\mathfrak{p}^{\ast}$ for some homogeneous elements $a,b\in R$ then $a\in\mathfrak{p}^{\ast}$ or $b\in\mathfrak{p}^{\ast}$. But it is important to note that $\mathfrak{p}^{\ast}$ is not a prime ideal of $R$ in general (see Example \ref{Example i-nice}). However, in the torsion-free case we have the following result:

\begin{corollary}\label{Coro 20 twenty yirmi} If $\mathfrak{p}$ is a prime ideal of a $G$-graded ring $R$ with $G$ a torsion-free Abelian group, then $\mathfrak{p}^{\ast}$ is a prime ideal of $R$.
\end{corollary}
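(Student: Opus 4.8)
The plan is to verify that $\mathfrak{p}^{\ast}$ meets the two hypotheses of Lemma \ref{Lemma 2-iki}—being a graded proper ideal and enjoying the homogeneous-factorization property—and then simply invoke that lemma. Since $G$ is assumed torsion-free Abelian, this is the entire content of the claim, so the real work is just assembling the relevant facts about $\mathfrak{p}^{\ast}$ that were recorded in the paragraph preceding the corollary.

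First I would note that $\mathfrak{p}^{\ast}$, being by construction the ideal generated by all homogeneous elements of $R$ lying in $\mathfrak{p}$, is a graded ideal contained in $\mathfrak{p}$. Because $\mathfrak{p}$ is prime it is proper, and hence so is $\mathfrak{p}^{\ast}$. The one point that genuinely needs an argument is the factorization property. Suppose $r,r'\in R$ are homogeneous with $rr'\in\mathfrak{p}^{\ast}$. Then $rr'\in\mathfrak{p}$, and since $\mathfrak{p}$ is prime we get $r\in\mathfrak{p}$ or $r'\in\mathfrak{p}$. In either case the element lying in $\mathfrak{p}$ is homogeneous, so by the defining property of $\mathfrak{p}^{\ast}$ (it contains every homogeneous element of $\mathfrak{p}$) it belongs to $\mathfrak{p}^{\ast}$. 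Thus $r\in\mathfrak{p}^{\ast}$ or $r'\in\mathfrak{p}^{\ast}$, which is exactly the hypothesis required by Lemma \ref{Lemma 2-iki}.

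With both conditions verified, Lemma \ref{Lemma 2-iki} applies directly and yields that $\mathfrak{p}^{\ast}$ is a prime ideal of $R$. There is essentially no obstacle here beyond the torsion-freeness of $G$, which is precisely the hypothesis that makes Lemma \ref{Lemma 2-iki} usable; without it the conclusion fails, as the earlier discussion indicates via Example \ref{Example i-nice}. The only thing to be careful about is to cite the factorization property as a genuine consequence of $\mathfrak{p}$ being prime rather than treating it as automatic, since it is this step that transmits primeness of $\mathfrak{p}$ into the hypothesis of the lemma.
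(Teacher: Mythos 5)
Your proposal is correct and follows exactly the paper's route: the paper's proof is a one-line appeal to Lemma \ref{Lemma 2-iki}, with the verification that $\mathfrak{p}^{\ast}$ is a graded proper ideal satisfying the homogeneous-factorization property already recorded in the paragraph preceding the corollary. You have merely written out that verification explicitly, which matches the intended argument.
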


\begin{proof} It follows from Lemma \ref{Lemma 2-iki}.
\end{proof}

\begin{corollary}\label{Coro 14-ondort} Let $R$ be a $G$-graded ring with $G$ a torsion-free Abelian group. Then every minimal prime ideal of $R$ is a graded ideal. In particular, the nilradical of $R$ is a graded ideal.
\end{corollary}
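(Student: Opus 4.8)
The plan is to let Corollary~\ref{Coro 20 twenty yirmi} do essentially all of the work and then invoke a minimality argument. Let $\mathfrak{p}$ be a minimal prime ideal of $R$, and consider $\mathfrak{p}^{\ast}$, the largest graded ideal contained in $\mathfrak{p}$. By construction $\mathfrak{p}^{\ast}\subseteq\mathfrak{p}$, and by Corollary~\ref{Coro 20 twenty yirmi}, applicable precisely because $G$ is a torsion-free Abelian group, the ideal $\mathfrak{p}^{\ast}$ is itself prime. Thus $\mathfrak{p}^{\ast}$ is a prime ideal sitting inside the minimal prime $\mathfrak{p}$. Minimality of $\mathfrak{p}$ forces $\mathfrak{p}^{\ast}=\mathfrak{p}$, and since $\mathfrak{p}^{\ast}$ is graded by definition, we conclude that $\mathfrak{p}$ is a graded ideal. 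This settles the first assertion.

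For the ``in particular'' clause, I would deduce it from the first assertion together with two standard facts. First, the nilradical of $R$ equals the intersection of all prime ideals, and since every prime ideal contains a minimal prime ideal (by a routine Zorn's lemma argument), the nilradical is in fact the intersection of all the minimal prime ideals of $R$. Second, an arbitrary intersection of graded ideals is again graded: if each $\mathfrak{p}_{i}$ is graded and $f=\sum_{x\in G}r_{x}$ lies in every $\mathfrak{p}_{i}$, then each homogeneous component $r_{x}$ lies in every $\mathfrak{p}_{i}$, hence in the intersection. Combining these with the fact that each minimal prime is graded (just proved) yields that the nilradical is a graded ideal.

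As for where the difficulty lies: the substantive content has already been absorbed into Corollary~\ref{Coro 20 twenty yirmi}, and behind it Lemma~\ref{Lemma 2-iki}, where the torsion-free hypothesis on $G$ is genuinely used to guarantee that passing from a prime $\mathfrak{p}$ to its graded core $\mathfrak{p}^{\ast}$ preserves primeness. The present corollary therefore requires no new ideas; the only points to verify are the elementary observations that $\mathfrak{p}^{\ast}\subseteq\mathfrak{p}$, that minimality upgrades this inclusion to an equality, and that intersections of graded ideals stay graded. I expect no real obstacle beyond recording these routine steps cleanly.
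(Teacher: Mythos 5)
Your proposal is correct and is precisely the argument the paper intends: its one-line proof (``It follows from the above corollary'') tacitly relies on exactly your steps, namely that $\mathfrak{p}^{\ast}\subseteq\mathfrak{p}$ is prime by Corollary~\ref{Coro 20 twenty yirmi}, that minimality forces $\mathfrak{p}^{\ast}=\mathfrak{p}$, and that the nilradical, being the intersection of the (graded) minimal primes, is graded. No gap; you have simply written out the routine details the paper omits.
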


\begin{proof} It follows from the above corollary. 
\end{proof}

\begin{remark}  The above corollaries can be also generalized to the setting of $M$-graded rings where $M$ is a totally ordered monoid with the cancellation property. 
\end{remark}

\begin{example}\label{Example i-nice} It is important to note that the above results may not be true for gradings by arbitrary Abelian groups. For example, let $R$ be an integral domain of characteristic $p>0$. By changing the grading, we can naturally consider the polynomial ring $R[x]=\bigoplus\limits_{d=0}^{p-1}S_{d}$ as a $\mathbb{Z}/p$-graded ring with the homogeneous components $S_{d}=\sum\limits_{n\geqslant0}Rx^{np+d}$. Note that $\mathbb{Z}/p$, the additive group of integers modulo $p\mathbb{Z}$, is not a torsion-free group. We know that $\mathfrak{p}=(x-1)$ is a prime ideal of $R[x]$, because $R[x]/(x-1)\simeq R$. 
But $\mathfrak{p}^{\ast}$ is not a prime ideal of $R[x]$, because the Frobenius endomorphism gives us $(x-1)^{p}=x^{p}-1\in S_{0}\cap\mathfrak{p}$ and so $(x-1)^{p}\in\mathfrak{p}^{\ast}$ but $x-1\notin\mathfrak{p}^{\ast}$. Also note that $R[x]$ modulo the graded ideal $I=(x^{p}-1)$ is a $\mathbb{Z}/p$-graded ring. The element $(x-1)+I$ of this ring is nilpotent, since $(x-1)^{p}\in I$. But $1\notin I$ (and also $x+I$ is not nilpotent). Thus the nilradical of the $\mathbb{Z}/p$-graded ring $R[x]/I$ is not a graded ideal, and hence it has a minimal prime ideal which is not a graded ideal. In fact, $\mathfrak{p}/I$ is the only minimal prime ideal of this ring which is not a graded ideal. 
\end{example}

The following result is already well known for $\mathbb{N}$-graded and $\mathbb{Z}$-graded rings, we generalize it to more general settings: 

\begin{lemma}\label{Lemma 5-five-bes} Let $R=\bigoplus\limits_{m\in M}R_{m}$ be an $M$-graded ring with $M$ a cancellative monoid. Then $1\in R_{0}$ where $0$ is the identity element of $M$.   
\end{lemma}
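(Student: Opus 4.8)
The plan is to locate the identity element $1$ inside the direct-sum decomposition and show that its degree-$0$ component already serves as a one-sided identity, which forces it to equal $1$. Concretely, I would first write $1=\sum\limits_{m\in M}e_{m}$ with $e_{m}\in R_{m}$ and only finitely many $e_{m}$ nonzero; the goal then becomes to prove $e_{m}=0$ for every $m\neq0$, i.e. $1=e_{0}\in R_{0}$.

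The core step is a degree comparison. Fix a nonzero homogeneous element $r\in R_{n}$ and expand $r=r\cdot1=\sum\limits_{m\in M}re_{m}$. Each nonzero product $re_{m}$ is homogeneous of degree $n+m$. Here is where cancellativity of $M$ enters decisively: the translation $m\mapsto n+m$ is injective, so the degrees $n+m$ (as $m$ ranges over the finite support of $1$) are pairwise distinct, and moreover $n+m=n$ holds precisely when $m=0$. Comparing the homogeneous component of degree $n$ on both sides of $\sum\limits_{m}re_{m}=r$ therefore isolates the single term $re_{0}$ on the left against $r$ on the right, yielding $re_{0}=r$.

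Since this holds for every homogeneous $r$ (and trivially for $r=0$), additivity of multiplication extends it to $xe_{0}=x$ for all $x\in R$; that is, $e_{0}$ is a right identity. Applying this to $x=1$ gives $e_{0}=1\cdot e_{0}=1$, so $1=e_{0}\in R_{0}$, as claimed. Symmetrically, starting from $1\cdot r=r$ shows that $e_{0}$ is a left identity, but one side suffices.

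I expect no serious obstacle beyond this bookkeeping; the only point that genuinely requires care is the use of the cancellation hypothesis, which is exactly what guarantees both the injectivity of $m\mapsto n+m$ and the uniqueness of the solution $m=0$ to $n+m=n$. Without cancellation the identity can genuinely spread across several degrees, so this hypothesis cannot be dropped.
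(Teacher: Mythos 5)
Your proof is correct and follows essentially the same route as the paper: write $1=\sum_{m}e_{m}$, multiply a homogeneous element by $1$, use cancellation in $M$ to isolate the degree-$n$ component and get $re_{0}=r$, then apply this with $r$ ranging over the components of $1$ (equivalently, with $x=1$) to conclude $1=e_{0}\in R_{0}$. The only cosmetic difference is that the paper runs the degree comparison directly on the components $r_{x}$ of $1$ rather than on an arbitrary homogeneous $r$.
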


\begin{proof} We may write $1=\sum\limits_{m\in M}r_{m}$ where $r_{m}\in R_{m}$ for all $m\in M$. If $x\in M$, then $r_{x}=\sum\limits_{m\in M}r_{x}r_{m}$. Since $M$ has the cancellation property, $r_{x}r_{0}$ is the only homogeneous element of degree $x$ on the right hand side of the above equality. Hence, $r_{x}=r_{x}r_{0}$. This yields that $1=(\sum\limits_{m\in M}r_{m})r_{0}=r_{0}\in R_{0}$.
\end{proof}

Under the assumptions of the above lemma, $R_{0}$ is a subring of $R$. 

\begin{example} It is important to note that if $M$ is not a cancellative monoid, then the unit element of an $M$-graded ring not only is not necessarily of degree zero (= the identity element of $M$), but also not necessarily homogeneous. For example, consider the monoid $M=\{1,3,0\}$, the submonoid of the multiplicative monoid of the ring $\mathbb{Z}/6$. Then the direct product ring $R=\mathbb{Z}\times\mathbb{Z}=R_1\oplus R_3\oplus R_0$ is an $M$-graded ring with the homogeneous components $R_1=\{(0,0)\}$, $R_{3}=\mathbb{Z}\times\{0\}$ and $R_0=\{0\}\times\mathbb{Z}$. The unit of this ring is not homogeneous, because $(1,1)=(1,0)+(0,1)$. As another example, the Nagata idealization ring $R=\mathbb{Z}\times\mathbb{Z}$ is also an $M$-graded ring with the same homogeneous components as above. The unit of this ring $(1,0)$ is homogeneous of degree $3\neq1=$ the identity element of $M$.
\end{example}

\section{Zero-divisors in a graded ring}\label{Sec:McCoy}

The following theorem gives an affirmative answer to Conjecture \ref{Conjecture I} and Conjecture \ref{Conjecture II} in the case that the group involved in the grading is a totally ordered group. But first we need to introduce a few useful tools. Let $R=\bigoplus\limits_{n\in G}R_{n}$ be a $G$-graded ring with $G$ a group. If $f= \sum\limits_{n\in G} r_{n}\in R$ with $r_{n}\in R_{n}$ for all $n\in G$, then we define the support of $f$ as $\Supp(f)= \{n\in G : r_{n} \neq 0\}$ which is a finite set. If $G$ is a totally ordered group and $f$ is a nonzero element,
then $n_{\ast}(f)$ denotes the smallest
element, and $n^{\ast}(f)$ denotes the largest element, of $\Supp(f)$ with respect to the ordering on $G$. We have $n_{\ast}(f)\leqslant n^{\ast}(f)$, and the equality holds if and only if $f$ is homogeneous. 
It is also clear that $n_{\ast}(f)+n_{\ast}(g)\leqslant n_{\ast}(fg)$ and $n^{\ast}(fg)\leqslant n^{\ast}(f)+n^{\ast}(g)$ for all $f,g\in R$ with $fg\neq0$.

The following theorem is one of the main technical results of this article. In this result, the group $G$ is not necessarily Abelian and the ring $R$ is not necessarily commutative: 

\begin{theorem}\label{th T-O} If $I$ is an unfaithful left ideal of a $G$-graded ring $R=\bigoplus\limits_{n\in G}R_{n}$ with $G$ a totally ordered group, then there exists a $($nonzero$)$ homogeneous element $g \in R$ such that $gI=0$. 
\end{theorem}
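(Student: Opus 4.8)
The plan is to rephrase the statement in terms of an annihilator ideal and then to locate a homogeneous element in it by induction on the size of the support, exploiting the total order on $G$ throughout. Put $A=\{r\in R: rI=0\}$. Since $I$ is a left ideal, $A$ is a two-sided ideal: it is closed under left multiplication by definition, and for $a\in A$, $s\in R$ one has $(as)I=a(sI)\subseteq aI=0$ because $sI\subseteq I$. The hypothesis that $I$ is unfaithful says precisely that $A\neq 0$. Thus the theorem is equivalent to the claim that the nonzero two-sided ideal $A$ contains a nonzero homogeneous element, and this is where the total order enters, through the leading/trailing data $n^{\ast},n_{\ast}$ and the support.

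The first thing I would record is that, in the totally ordered setting, leading terms kill leading terms. Writing $\lambda(f)$ and $\mu(f)$ for the leading and trailing homogeneous coefficients of a nonzero $f$: if $a\in A$ and $x\in I$ are nonzero then $ax=0$, and the homogeneous component of $ax$ in the top degree $n^{\ast}(a)+n^{\ast}(x)$ is exactly $\lambda(a)\lambda(x)$ (any other product $a_{n}x_{m}$ lands in a strictly smaller degree, by compatibility and cancellation in $G$), so $\lambda(a)\lambda(x)=0$; the same comparison at the bottom degree gives $\mu(a)\mu(x)=0$. With this in hand I would induct on the least cardinality $N$ of $\Supp(g)$ over nonzero $g\in A$, aiming to prove $N=1$. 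Fix such a $g$ with leading coefficient $c$ of degree $n^{\ast}(g)$. If $cI=0$ then $c$ is already the desired homogeneous element, so we may assume $cx\neq 0$ for some $x\in I$.

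The natural way to contradict minimality is to cancel the top term of $g$: if $s$ is homogeneous with $cs=0$ but $gs\neq 0$, then $gs\in A$ is nonzero of support strictly smaller than $N$, which is impossible. The promising choices are $s=\lambda(x)$ for $x\in I$, which satisfy $cs=0$ by the previous paragraph. The difficulty — and this is where the argument really lives — is that minimality of $N$ forces every such attempt to fail: since $gs$ has support at most $N$, minimality makes it either $0$ or of support exactly $N$, so the homogeneous components of $g$ share a common right annihilator on homogeneous elements; consequently $g$ itself kills every leading coefficient $\lambda(x)$ (and, symmetrically, every trailing coefficient). This reflects the essential obstacle that $I$ is not a graded ideal: one cannot simply strip the leading term of an element of $I$ and remain inside $I$, and equivalently $c$ turns out to be only locally nilpotent on $I$ rather than annihilating it.

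To break this deadlock I would choose $x_{0}\in I$ with $cx_{0}\neq 0$ for which $|\Supp(cx_{0})|$ is smallest possible, and set $w=cx_{0}\in I$. If $cw$ were nonzero, then $w\in I$ would satisfy $cw\neq 0$ while $|\Supp(cw)|<|\Supp(cx_{0})|$ (the top term of $cw$ vanishes, since $c\lambda(w)=0$ by the leading-kills-leading fact applied to $w\in I$), contradicting the minimal choice of $x_{0}$; hence $cw=0$, producing a genuine nonzero $w\in I$ annihilated by $c$. The homogeneous components of $w$, together with the coupling relations read off from $gI=0$ and from $c\lambda(x)=c\mu(x)=0$, then allow one to exhibit an explicit nonzero homogeneous annihilator of $I$ — in the model case where $cx_{0}$ is homogeneous this annihilator is the single component of $cx_{0}$, i.e.\ the product of $c$ with a trailing coefficient, and one verifies by a direct computation that it kills all of $I$. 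I expect this last verification to be the technical heart of the proof: checking that the homogeneous element manufactured from $w$ annihilates every element of $I$, and not merely their leading terms, is exactly the point at which the non-gradedness of $I$ must be overcome, whereas the remainder is the combinatorics of supports under the total order together with the minimality of $N$.
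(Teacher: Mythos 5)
Your setup is sound and, up to a point, it matches the paper's own strategy: $A=\Ann_{R}(I)$ is a nonzero two-sided ideal, you pick $g\in A$ of minimal support size $N$ with leading coefficient $c$, and your key observation --- for homogeneous $s$, $cs=0$ forces $gs=0$, since otherwise $gs$ would be a nonzero element of $A$ whose top term has been killed, hence of support size $<N$ --- is exactly the contradiction engine the paper uses. The gap is that you never fire it. What is needed is a homogeneous element coming from $I$ with $cs=0$ but $gs\neq 0$; producing one is a short computation that you never perform, and your substitute (the second minimization over $|\Supp(cx)|$, yielding $w=cx_{0}\in I$ with $cw=0$) heads in a direction from which the conclusion does not follow. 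Concretely, the claim carrying your last paragraph --- that in the model case the single homogeneous component of $cx_{0}$ annihilates all of $I$ --- is asserted, not proved: nothing you have established ($cw=0$, hence $gw_{j}=0$ for each component $w_{j}$ of $w$; $c\lambda(x)=c\mu(x)=0$ and $g\lambda(x)=g\mu(x)=0$ for nonzero $x\in I$) implies that a component of an element of $I$ kills all of $I$. Indeed, since the configuration $N\geqslant 2$ is ultimately impossible, ``verifying by a direct computation'' that this element annihilates $I$ amounts to deriving the very contradiction the theorem requires, and you offer no mechanism for doing so. Your own admission that this verification is ``the technical heart'' is accurate: the proof is missing precisely its heart.

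The missing step is short, and you already own every tool it needs. Take any $f\in I$ with $cf\neq 0$ (your standing assumption). Since $cf_{k}\neq 0$ for some homogeneous component $f_{k}$ of $f$, and $cf_{k}$ is the top-degree component of $gf_{k}$, we get $gf_{k}\neq 0$; so we may choose $t\in\Supp(f)$ \emph{largest} with $gf_{t}\neq 0$. Now expand the component of degree $n^{\ast}(g)+t$ of $0=gf$: if $x+y=n^{\ast}(g)+t$ with $(x,y)\neq(n^{\ast}(g),t)$, the total order forces $x>n^{\ast}(g)$ (whence $g_{x}=0$) or $y>t$ (whence $gf_{y}=0$, so $g_{x}f_{y}=0$). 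Hence $cf_{t}=(gf)_{n^{\ast}(g)+t}=0$ while $gf_{t}\neq 0$, contradicting your key observation --- equivalently, $gf_{t}$ is a nonzero element of the two-sided ideal $A$ with $|\Supp(gf_{t})|\leqslant N-1$. This is the paper's proof; the idea your write-up lacks is the choice of $t$ as the \emph{largest} index with $gf_{t}\neq 0$ (so that one homogeneous component of the equation $gf=0$ collapses to the single term $cf_{t}$), rather than any minimization over $|\Supp(cx)|$.
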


\begin{proof} Amongst all nonzero elements of $\Ann_{R}(I)$, by the well-ordering principle of the natural numbers, we may choose some $g$ such that the number $\ell:=|\Supp(g)|$ is minimal. 
We show that $\ell=1$, i.e. $g$ is homogeneous. We have $\ell\geqslant1$, because $g\neq0$.
Suppose that $\ell\geqslant 2$. Assume $g=\sum\limits_{n\in G}g_{n}$ with $g_{n}\in R_{n}$ for all $n$.
Put $s:=n^*(g)\in G$. Then $g_{s}\notin\Ann_{R}(I)$, because $|\Supp(g_s)|=|\{s\}|=1$. Thus there exists some $f=\sum\limits_{n\in G}f_{n}\in I$ (with $f_{n}\in R_{n}$ for all $n$) such that $g_{s}f\neq0$ and so $g_sf_k \neq 0$ for some $k\in G$.
It follows that $gf_k \neq 0$.
Choose $t\in\Supp(f)\subseteq G$ to be the largest element (with respect to the ordering) such that $gf_t \neq 0$. Thus $gf_{n}=0$ for all $n>t$. But $gf=0$ and so $(gf)_{s+t}=g_{s}f_t +\sum\limits_{\substack{x+y=s+t,\\(x,y)\neq(s,t)}}g_{x}f_{y}= 0$.  
Take $x,y\in G$ with $x+y=s+t$ and $(x,y)\neq(s,t)$. If $x<s$ and $y<t$ then, since $G$ is a totally ordered group, we will have $s+t=x+y<s+t$ which is impossible. Hence, $x>s$ or $y>t$.
If $x>s$ then $g_{x}=0$ and so $g_{x}f_{y}=0$. If $y>t$ 
then $gf_{y}=0$ and so $g_{x}f_{y}=0$.  
Thus $\sum\limits_{\substack{x+y=s+t,\\(x,y)\neq(s,t)}}g_{x}f_{y}=0$ and so $g_{s}f_{t}=0$.
Note that $\Ann_{R}(I)$ is a two-sided ideal of $R$, and so
$0\neq gf_t\in\Ann_{R}(I)$. We also have $|\Supp(gf_t)|\leqslant\ell-1$, because  
$g_{s} f_{t}=0$.
This contradicts the minimality of $\ell=|\Supp(g)|$.
Thus, $\ell=1$. Hence, $g$ is homogeneous.
\end{proof}

Note that in Theorem \ref{th T-O}, $I$ is an arbitrary (not necessarily graded) ideal. 

\begin{remark} Recall that if $R$ is a ring and $M$ is a monoid, then every element of the monoid-ring $R[M]=\bigoplus\limits_{m\in M}R$ can be written (uniquely) as $\sum\limits_{m\in M}r_{m}\epsilon_{m}$ with $r_{m}\in R$ where $\epsilon_{m}=(\delta_{m,n})_{n\in M}$ and $\delta_{m,n}$ is the Kronecker delta (note that if $r\in R$ then $r\epsilon_{m}$ denotes the element in $R[M]$ which has $r$ in the $m$ component and zero in all other components).
We have $\epsilon_{m}\cdot\epsilon_{n}=\epsilon_{m+n}$ for all $m,n\in M$ and $\epsilon_{0}$ is the unit (multiplication identity) of this ring where 0 is the identity element of $M$. Thus $R[M]$ is an $M$-graded ring with homogeneous components $R\epsilon_{m}=\{r\epsilon_{m}: r\in R\}$. If $M$ has the cancellation property, then $\epsilon_{m}$ is a non-zero-divisor element of $R[M]$ for all $m \in M$. 
In the literature, $r\epsilon_{m}$ is often denoted by $r\cdot m$, but we must add that our innovation of using $\epsilon_{m}$ instead of $m$ has many advantages. For instance, it considerably simplifies computations in monoid-rings. 
\end{remark}

\begin{example} Note that Theorem \ref{th T-O} fails if $G$ is not a totally ordered group. For example, $f=(1/2)(\epsilon_{0}+\epsilon_{1})$ is an idempotent element of the group-ring $R=\mathbb{Q}[\mathbb{Z}/2]=\mathbb{Q}\epsilon_{0}+
\mathbb{Q}\epsilon_{1}$ which is a $\mathbb{Z}/2$-graded ring. Hence, $f$ is a zero-divisor of $R$. In fact, $\Ann_{R}(f)=Rg$ where $g=(1/2)(\epsilon_{0}-\epsilon_{1})$.
But there is no homogeneous element in $R$ which could vanish $f$. 
\end{example}

\begin{corollary}\label{Coro 3-uc-Kap} Let $R$ be an $M$-graded commutative ring with $M$ a totally ordered cancellative monoid. If $f$ is a zero-divisor element of $R$, then there exists a (nonzero) homogeneous $g \in R$ such that $gf=0$.
\end{corollary}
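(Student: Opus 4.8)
The plan is to deduce this corollary from the main technical result, Theorem \ref{th T-O}, which settles the analogous statement for unfaithful left ideals in rings graded by a totally ordered \emph{group}. The only gap between that setting and the present one is that here the grading monoid $M$ need not be a group; the bridge is the Grothendieck group construction, exactly as it was used in Corollary \ref{Coro onalti 16}. So the whole corollary should reduce, with no new arithmetic, to re-grading $R$ by the Grothendieck group of $M$ and then quoting Theorem \ref{th T-O}.

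Concretely, I would first let $G$ be the Grothendieck group of $M$. Since $M$ is a totally ordered cancellative monoid, $G$ is a totally ordered group, with order $[a,b]<[c,d]$ if and only if $a+d<b+c$ in $M$, as recorded in the discussion preceding Corollary \ref{Coro onalti 16}. Changing the grading along the canonical monoid morphism $M\rightarrow G$, $m\mapsto[m,0]$, lets me view $R$ as a $G$-graded ring; and because $M$ is cancellative this morphism is injective, so the homogeneous components are not merged and remain exactly the same as for the original $M$-grading. In particular, ``homogeneous for the $G$-grading'' and ``homogeneous for the $M$-grading'' coincide, so it suffices to produce a nonzero homogeneous $g$ with $gf=0$ in the $G$-graded ring $R$.

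Next I would encode the zero-divisor hypothesis as an unfaithful ideal so that Theorem \ref{th T-O} applies. Take $I=Rf$. By commutativity, $\Ann_{R}(I)=\{r\in R: rf=0\}=\Ann_{R}(f)$, which is nonzero precisely because $f$ is a zero-divisor; hence $I$ is unfaithful. Applying Theorem \ref{th T-O} to the $G$-graded ring $R$ and the ideal $I$ yields a nonzero homogeneous element $g\in R$ with $gI=0$. Since $f\in I=Rf$, this forces $gf=0$, and $g$ is homogeneous for the $G$-grading and therefore for the original $M$-grading, as required.

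I do not expect a serious obstacle here: essentially all of the difficulty has already been absorbed into Theorem \ref{th T-O}, and the present argument is a short change-of-grading reduction. The two points that genuinely need the stated hypotheses — and which I would be careful to verify rather than assert — are that the Grothendieck group $G$ inherits a \emph{total} order (this uses that $M$ is totally ordered and cancellative) and that the passage $M\rightarrow G$ is injective so that the graded pieces do not collapse (this uses cancellativity). Both are exactly the inputs recorded in the preliminaries and in the proof of Corollary \ref{Coro onalti 16}, so the reduction goes through cleanly.
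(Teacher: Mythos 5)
Your proposal is correct and follows exactly the paper's own argument: use commutativity to see that $I=Rf$ is an unfaithful ideal, pass to the grading by the Grothendieck group $G$ of $M$ (a totally ordered group, with components unchanged by cancellativity), and apply Theorem \ref{th T-O}. The paper states this more tersely, but the reasoning is identical.
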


\begin{proof} Since $R$ is a commutative ring, $I=Rf$ is an unfaithful ideal of $R$. Now the assertion follows from Theorem \ref{th T-O} by passing to the $G$-grading where $G$ is the Grothendieck group of $M$ which is a totally ordered group. 
\end{proof}

\begin{corollary}\label{cor:HomComZDisZD} If $M$ is a totally ordered cancellative monoid, then every homogeneous component of a zero-divisor element of an $M$-graded ring is a zero-divisor.
\end{corollary}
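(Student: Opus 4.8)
The plan is to reduce the statement to the existence of a single nonzero homogeneous element that annihilates $f$, and then to separate this annihilation degree by degree using cancellativity of $M$. Concretely, let $f=\sum_{m\in M}f_{m}$ (with $f_{m}\in R_{m}$) be a zero-divisor. First I would invoke Corollary \ref{Coro 3-uc-Kap}, which already packages the substantive part: passing to the Grothendieck group of $M$ (a totally ordered group, along which the homogeneous components are unchanged because $M\hookrightarrow G$ is injective) and applying Theorem \ref{th T-O} to the unfaithful ideal $Rf$ produces a nonzero homogeneous $g\in R$ with $gf=0$.

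Having secured such a $g$, say of degree $\dg(g)=e\in M$, the remainder is a short piece of degree bookkeeping. From $gf=0$ we obtain
\[
\sum_{m\in M} g f_{m}=0 ,
\]
and each nonzero product $g f_{m}$ is homogeneous of degree $e+m$. Since $M$ is cancellative, the map $m\mapsto e+m$ is injective, so the nonzero terms $g f_{m}$ lie in pairwise distinct homogeneous components $R_{e+m}$. As the homogeneous components form a direct sum, a sum of elements in distinct components can vanish only if each term vanishes; hence $g f_{m}=0$ for every $m\in M$. Because $g\neq0$, every nonzero component $f_{m}$ is annihilated by the nonzero element $g$, and is therefore a zero-divisor, which is exactly the claim.

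I expect the only genuine obstacle to be the first step, namely securing one common homogeneous annihilator of $f$: the force of the statement is precisely that a single fixed $g$ kills all components simultaneously, and this is what Corollary \ref{Coro 3-uc-Kap} (through Theorem \ref{th T-O}) delivers. Once $g$ is in hand, cancellativity of $M$ carries out the rest automatically. It is worth flagging that commutativity enters exactly here and cannot simply be discarded: the separation-by-degree argument would still run over a noncommutative base, but there need not exist any homogeneous element annihilating $f$ (indeed, if the degree-zero component were a unit, the argument above would force $g=0$), so the conclusion itself can break down without it.
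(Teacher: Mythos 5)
Your proposal is correct and is essentially the paper's own proof: the paper simply cites Corollary \ref{Coro 3-uc-Kap}, and your degree-separation argument (using cancellativity of $M$ so that the products $gf_{m}$ land in pairwise distinct components) is exactly the routine step that citation leaves implicit. Your closing remark about commutativity is also well taken, since Corollary \ref{Coro 3-uc-Kap} is stated for commutative rings and that hypothesis is what guarantees $Rf$ is unfaithful.
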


\begin{proof} It follows from Corollary \ref{Coro 3-uc-Kap}.
\end{proof}

\begin{example}
The converse of Corollary~\ref{cor:HomComZDisZD} does not hold.
To see this, consider the $\mathbb{N}$-graded ring $(\mathbb{Z}/6)[x]$ in which $2$ and $3x$ are zero-divisors. However, by Corollary~\ref{Corollary III}, the element $2+3x$ is not a zero-divisor.
\end{example}

In the following results (\ref{Coro 6-altin}, \ref{Coro 5-bes-Kap} and \ref{Coro TM 18}), the monoid $M$ is commutative, but the ring $R$ is not necessarily commutative:

\begin{corollary}\label{Coro 6-altin} If $I$ is an unfaithful left ideal of an $M$-graded ring $R$ with $M$ a totally ordered cancellative monoid, then there exists a $($nonzero$)$ homogeneous $g\in R$ such that $gI=0$.
\end{corollary}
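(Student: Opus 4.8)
The plan is to reduce Corollary \ref{Coro 6-altin} directly to Theorem \ref{th T-O} via the change-of-grading technique recorded in the Preliminaries. The key observation is that the hypotheses in the two statements differ only in the indexing object: Theorem \ref{th T-O} is stated for a \emph{totally ordered group} $G$, whereas here we are given an $M$-graded ring with $M$ a totally ordered cancellative monoid. Since $M$ is cancellative, it embeds canonically into its Grothendieck group $G$, and by the remark following Corollary \ref{Coro onalti 16} this Grothendieck group is a \emph{totally ordered group} whose order extends that of $M$. This is precisely the setting demanded by Theorem \ref{th T-O}, so the entire task is to transport the grading along the injection $M \hookrightarrow G$ and then invoke the theorem verbatim.

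First I would let $G$ denote the Grothendieck group of $M$ and recall (from the Preliminaries, via Northcott's result) that $G$ is a totally ordered group because $M$ is a totally ordered cancellative monoid. Next I would apply the change-of-grading construction to the canonical monoid morphism $M \rightarrow G$, $m \mapsto [m,0]$: this realizes $R$ as a $G$-graded ring with homogeneous components $S_{d} = \sum_{\phi(m)=d} R_{m}$. The crucial point, already noted in the proof of Corollary \ref{Coro onalti 16}, is that because $M$ is cancellative the map $M \rightarrow G$ is \emph{injective}, so each fiber $\phi^{-1}(d)$ is either empty or a singleton; hence the homogeneous components are unchanged, $S_{[m,0]} = R_{m}$, and an element of $R$ is homogeneous for the $G$-grading if and only if it was homogeneous for the original $M$-grading. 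In particular the notion of ``homogeneous element'' is preserved under this re-grading, which is exactly what we need for the conclusion to transfer back.

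With $R$ now viewed as a $G$-graded ring over the totally ordered group $G$, the hypothesis is immediately inherited: $I$ is still an unfaithful left ideal of the same ring $R$ (unfaithfulness is a property of $I$ as a left ideal and is insensitive to the choice of grading). Theorem \ref{th T-O} then yields a nonzero homogeneous element $g \in R$ (homogeneous with respect to the $G$-grading) such that $gI = 0$. Since homogeneity for the $G$-grading coincides with homogeneity for the original $M$-grading, $g$ is the desired nonzero $M$-homogeneous element, completing the proof.

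I do not anticipate a genuine obstacle here: the argument is purely a matter of checking that the change-of-grading preserves the relevant structure, and both ingredients---that $G$ is totally ordered and that the homogeneous components are unaffected---are already established earlier in the excerpt. If there is any subtle point worth double-checking, it is the verification that the $G$-grading really is a ring grading (i.e.\ that $S_{d}S_{d'} \subseteq S_{d+d'}$), but this is automatic from $\phi$ being a monoid morphism and is part of the general change-of-grading formalism, so no separate work is needed.
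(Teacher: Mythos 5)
Your proposal is correct and follows exactly the paper's own route: the paper also proves this corollary by passing to the Grothendieck group $G$ of $M$ (which is a totally ordered group since $M$ is a totally ordered cancellative monoid), changing the grading along the injective canonical map $M\rightarrow G$, and applying Theorem \ref{th T-O}. The details you spell out---injectivity preserving the homogeneous components, unfaithfulness being independent of the grading, and homogeneity transferring back---are precisely the points the paper leaves implicit.
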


\begin{proof} It follows from Theorem \ref{th T-O} by passing to the $G$-grading where $G$ is the Grothendieck group of $M$ which is a totally ordered group. 
\end{proof}

\begin{corollary}\label{Coro 5-bes-Kap} Let $R$ be a ring and $M$ a totally ordered cancellative monoid. If $I$ is an unfaithful left ideal of the monoid-ring $R[M]$, then $aI=0$ for some nonzero $a\in R$.
\end{corollary}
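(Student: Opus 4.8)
The plan is to deduce Corollary \ref{Coro 5-bes-Kap} directly from Corollary \ref{Coro 6-altin} by exploiting the specific grading structure of the monoid-ring $R[M]$. Recall that $R[M]=\bigoplus\limits_{m\in M}R\epsilon_{m}$ is an $M$-graded ring whose homogeneous component of degree $m$ is precisely $R\epsilon_{m}=\{r\epsilon_{m}: r\in R\}$. Since $M$ is a totally ordered cancellative monoid, Corollary \ref{Coro 6-altin} applies verbatim: given the unfaithful left ideal $I$ of $R[M]$, there exists a nonzero homogeneous element $g\in R[M]$ with $gI=0$.

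The only remaining task is to translate this homogeneous annihilator into an element of the base ring $R$. First I would write $g=a\epsilon_{m}$ for some nonzero $a\in R$ and some $m\in M$, which is possible precisely because the homogeneous elements of $R[M]$ are exactly the elements of the components $R\epsilon_{m}$. The key observation is that $\epsilon_{m}$ is a unit-like factor that can be stripped off: since $M$ is cancellative, $\epsilon_{m}$ is a non-zero-divisor of $R[M]$ (as noted in the remark following Theorem \ref{th T-O}). I would then argue that $a\epsilon_{0}$, i.e.\ the element $a\in R$ itself sitting in degree-zero, also annihilates $I$. Concretely, for any $f\in I$ the ideal $I$ being a left ideal means $\epsilon_{m}f\in I$ whenever $f\in I$ is not automatic, so instead the cleaner route is to use the relation $a\epsilon_{m}\cdot I=0$ and left-multiply by a suitable element, or to pass through $n_{\ast}$ considerations. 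The most transparent argument is: from $(a\epsilon_{m})f=0$ for all $f\in I$, and using that $\epsilon_{m}$ commutes with the $R$-coefficients appropriately, we obtain $a\epsilon_{0}\cdot(\epsilon_{m}f)=\epsilon_{m}\cdot(a\epsilon_{0})f$, reducing to showing $(a\epsilon_{0})f=0$; since $\epsilon_{m}$ is a non-zero-divisor, $(a\epsilon_{m})f=\epsilon_{m}(a\epsilon_{0})f$ in the commutative-exponent sense forces $(a\epsilon_{0})f=0$.

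I expect the main (though minor) obstacle to be the noncommutativity of $R$: because $R$ need not be commutative, one must be careful that $a\epsilon_{m}=\epsilon_{m}a=\epsilon_{0}a\cdot\epsilon_{m}$ type manipulations are legitimate. The saving grace is that the scalars $\epsilon_{m}$ are central in the sense that $\epsilon_{m}(r\epsilon_{n})=(r\epsilon_{m+n})=(r\epsilon_{n})\epsilon_{m}$ for the monoid part, and more importantly $\epsilon_{m}$ is a non-zero-divisor. Thus from $(a\epsilon_{m})I=0$ I would factor $a\epsilon_{m}=(a\epsilon_{0})\epsilon_{m}=\epsilon_{m}(a\epsilon_{0})$ and conclude, via cancellation of the non-zero-divisor $\epsilon_{m}$ on the appropriate side, that $(a\epsilon_{0})I=0$. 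Identifying $a\epsilon_{0}$ with $a\in R$ and noting $a\neq 0$ since $g\neq 0$, this gives exactly the desired nonzero $a\in R$ with $aI=0$, completing the proof.
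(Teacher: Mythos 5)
Your proposal is correct and takes essentially the same route as the paper: apply Corollary \ref{Coro 6-altin} to get a nonzero homogeneous annihilator $a\epsilon_{m}$ of $I$, factor it as $(a\epsilon_{m})f=\epsilon_{m}\bigl((a\epsilon_{0})f\bigr)$, and cancel the non-zero-divisor $\epsilon_{m}$ (cancellativity of $M$) to conclude $aI=0$. The mid-proof detour about left-multiplying by suitable elements or $n_{\ast}$ considerations is unnecessary, but the argument you finally settle on is exactly the paper's.
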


\begin{proof} We know that the monoid-ring $R[M]$ is an $M$-graded ring with homogeneous components $R\epsilon_{m}$. Then by Corollary \ref{Coro 6-altin}, there exists a (nonzero) homogeneous element $a\epsilon_{m}\in R[M]$  such that $\epsilon_{m}(aI)=(a\epsilon_{m})I=0$ where $0\neq a\in R$ and $m\in M$. But $\epsilon_{m}$ is a non-zero-divisor of $R[M]$, because $M$ has the cancellation property. Hence, $aI=0$.  
\end{proof}

Recall that the monoid-ring $R[\mathbb{N}]$ is called the ring of polynomials over $R$ with the variable $x:=\epsilon_{1}$ and is denoted by $R[x]$ where $\mathbb{N}=\{0,1,2,\ldots\}$ is the additive monoid of natural numbers. Then it is clear that $x^{n}=\epsilon_{n}$ for all $n\geqslant0$.
Similarly, the monoid-ring $R[\mathbb{N}\oplus\mathbb{N}]$ is called the ring of polynomials over $R$ with the variables $x:=\epsilon_{(1,0)}$ and $y:=\epsilon_{(0,1)}$ and is denoted by $R[x,y]$. Then we have $x^{m}y^{n}=\epsilon_{(m,n)}$ for all $m,n\geqslant0$.
More generally, the monoid-ring $R[\bigoplus\limits_{k\in S}\mathbb{N}]$ is called the ring of polynomials over $R$ with the variables $x_{k}:=\epsilon_{s_{k}}$ and is denoted by $R[x_{k}: k\in S]$ where $s_{k}=(\delta_{i,k})_{i\in I}\in \bigoplus\limits_{k\in S}\mathbb{N}$ and $\delta_{i,k}$ is the Kronecker delta.
 
Now the above result makes the multi-variable versions of McCoy's theorem quite easy, with no induction on the number of variables required:

\begin{corollary}\label{Coro TM 18} If $I$ is an unfaithful left ideal of the polynomial ring $R[x_{k}: k\in S]$, then $aI=0$ for some nonzero $a\in R$.
\end{corollary}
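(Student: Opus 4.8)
The plan is to recognize the polynomial ring $R[x_{k}: k\in S]$ as the monoid-ring $R[M]$ with $M=\bigoplus_{k\in S}\mathbb{N}$, and then to invoke Corollary \ref{Coro 5-bes-Kap}. For that corollary to apply, the only thing I must establish is that $M$ is a totally ordered cancellative monoid; everything else is already packaged in the earlier result, so the statement is essentially the special case $M=\bigoplus_{k\in S}\mathbb{N}$.

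First I would check cancellativity. The monoid $M=\bigoplus_{k\in S}\mathbb{N}$ is a submonoid of the Abelian group $\bigoplus_{k\in S}\mathbb{Z}$, and as noted in Example \ref{Example 3-uch} every submonoid of a group has the cancellation property; hence $M$ is cancellative.

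Next I would equip $M$ with a total order compatible with addition. I would fix a total order (for instance a well-ordering, which exists by the well-ordering theorem) on the index set $S$, and then use the induced lexicographic order on $M$: given distinct $a=(a_{k})$ and $b=(b_{k})$ in $M$, the set $\{k\in S: a_{k}\neq b_{k}\}$ is finite (both have finite support) and nonempty, hence has a least element $k_{0}$ in the chosen order on $S$, and I declare $a<b$ whenever $a_{k_{0}}<b_{k_{0}}$ in $\mathbb{N}$. This is a total order because comparison at the first differing coordinate yields trichotomy. It is compatible with the operation because adding a fixed $c\in M$ changes neither which coordinate is the first at which $a$ and $b$ differ nor the direction of the inequality there; combined with cancellativity this even upgrades to strictness $a+c<b+c$, exactly as in the definition of totally ordered monoid recalled in Section~\ref{Sec:Prelim}.

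With $M$ now a totally ordered cancellative monoid, the claim is precisely Corollary \ref{Coro 5-bes-Kap} applied to $R[M]$: an unfaithful left ideal $I$ is annihilated on the left by some nonzero $a\in R$. I expect no genuine obstacle here, since the heavy lifting (the minimality argument on supports) already lives in Theorem \ref{th T-O}, and the reduction back to $R$ via non-zero-divisibility of the $\epsilon_{m}$ lives in Corollary \ref{Coro 5-bes-Kap}. The only point I would take care to state explicitly is the set-theoretic step of ordering $S$ when $S$ is infinite; after that choice, verifying that the lexicographic order is additive-compatible is entirely routine.
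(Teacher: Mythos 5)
Your proposal is correct and takes essentially the same route as the paper: the paper's proof is precisely the one-line appeal to Corollary~\ref{Coro 5-bes-Kap} after identifying $R[x_{k}: k\in S]$ with the monoid-ring $R[\bigoplus_{k\in S}\mathbb{N}]$, as set up in the paragraph preceding the corollary. The details you supply—cancellativity of $\bigoplus_{k\in S}\mathbb{N}$ as a submonoid of a group and the additive-compatible lexicographic total order—are exactly the facts the paper leaves implicit (the lexicographic ordering on this monoid is invoked explicitly later, in the corollary characterizing units of polynomial rings).
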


\begin{proof} It follows from Corollary \ref{Coro 5-bes-Kap}.
\end{proof}

\begin{corollary}\label{Corollary III} If $R$ is a commutative ring and $f$ is a zero-divisor element of the polynomial ring $R[x_{k}: k\in S]$, then $af=0$ for some nonzero $a\in R$.
\end{corollary}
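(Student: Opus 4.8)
The plan is to deduce this from Corollary \ref{Coro TM 18} by packaging the single zero-divisor $f$ into an unfaithful ideal. Since $R$ is commutative, the polynomial ring $R[x_{k}:k\in S]$ is itself commutative, so every left ideal is two-sided and the annihilator behaves symmetrically. First I would form the principal ideal $I=R[x_{k}:k\in S]\,f$ generated by $f$.

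Next I would verify that $I$ is unfaithful. Because $f$ is a zero-divisor, there is a nonzero element $h\in R[x_{k}:k\in S]$ with $fh=0$. By commutativity $h$ annihilates every element $rf$ of $I$, since $h(rf)=r(fh)=0$; thus $h$ is a nonzero element of $\Ann_{R[x_{k}:k\in S]}(I)$, so $I$ is unfaithful.

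Finally, I would invoke Corollary \ref{Coro TM 18} with this unfaithful ideal $I$ to produce a nonzero $a\in R$ with $aI=0$. Since $R[x_{k}:k\in S]$ has the identity $1$, we have $f=1\cdot f\in I$, whence $af=0$, as required.

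The argument is essentially immediate once Corollary \ref{Coro TM 18} is available; the only point requiring care is confirming that an annihilator of the single element $f$ in fact annihilates the whole ideal it generates, which is exactly where the commutativity of $R$ (and hence of the polynomial ring) is used. No induction on the number of variables is needed, since that work has already been absorbed into the passage to monoid-rings underlying Corollary \ref{Coro TM 18}.
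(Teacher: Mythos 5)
Your proof is correct and is exactly the paper's intended argument: the paper's one-line proof (``It follows from the above corollary'') is precisely your route of forming the principal ideal $I=R[x_{k}:k\in S]\,f$, using commutativity to see that an annihilator of $f$ annihilates all of $I$ (so $I$ is unfaithful), and then applying Corollary \ref{Coro TM 18}. This mirrors the paper's own expanded version of the same reduction in Corollary \ref{Coro 3-uc-Kap}, so there is nothing to add.
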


\begin{proof} It follows from the above corollary.
\end{proof}

If $f$ is an element of a graded ring $R$, then $\mathrm{C}(f)$ denotes the ideal of $R$ generated by the homogeneous components of $f$. It is the smallest graded ideal containing $Rf$.  

\begin{corollary}\label{coro 4 zero divisor}
If $R=\bigoplus\limits_{m\in M}R_{m}$ is an $M$-graded commutative ring with $M$ a totally ordered cancellative monoid, then $Z(R)\subseteq\bigcup
\limits_{\mathfrak{p}\in\Spec(R)}\mathfrak{p}^{\ast}$.
\end{corollary}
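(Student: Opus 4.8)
The plan is to reduce the statement to a single zero-divisor $f\in Z(R)$ and to produce one prime $\mathfrak{p}$ with $f\in\mathfrak{p}^{\ast}$. The guiding observation is that, since $\mathfrak{p}^{\ast}$ is generated by the homogeneous elements of $R$ lying in $\mathfrak{p}$, a decomposition $f=\sum_{m\in M}r_{m}$ into homogeneous components satisfies $f\in\mathfrak{p}^{\ast}$ as soon as every $r_{m}$ belongs to $\mathfrak{p}$: each nonzero $r_{m}$ is then a homogeneous element of $\mathfrak{p}$, hence lies in $\mathfrak{p}^{\ast}$, and $f$ is a finite sum of such elements. So the whole task becomes finding a single prime that contains all homogeneous components of $f$ at once.

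To obtain such a common container I would first invoke Corollary \ref{Coro 3-uc-Kap}: as $f$ is a zero-divisor of the $M$-graded commutative ring $R$ with $M$ totally ordered and cancellative, there is a nonzero homogeneous $g$, say of degree $d$, with $gf=0$. The crucial step is then to upgrade this single relation into $gr_{m}=0$ for every $m$. Writing $gf=\sum_{m}gr_{m}$, each nonzero term $gr_{m}$ is homogeneous of degree $d+m$, and because $M$ is cancellative these degrees are pairwise distinct for distinct $m$; thus $gf=0$ forces every homogeneous component $gr_{m}$ to vanish. Consequently all the $r_{m}$ lie in $\Ann_{R}(g)$, which is a proper ideal (since $g\neq0$ keeps $1$ out of it). Choosing any prime $\mathfrak{p}\supseteq\Ann_{R}(g)$ — a maximal ideal over it will do — puts every $r_{m}$ into $\mathfrak{p}$, whence $f\in\mathfrak{p}^{\ast}$ and the asserted inclusion follows.

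I expect the only genuine subtlety to be the insistence on a \emph{common} prime. One might be tempted to argue directly from Corollary \ref{cor:HomComZDisZD}, which already says that each homogeneous component $r_{m}$ is itself a zero-divisor and hence lies in some prime; but these primes could differ from component to component, and a union of distinct $\mathfrak{p}^{\ast}$'s need not contain $f$. The point of routing through a single annihilator $g$ via Corollary \ref{Coro 3-uc-Kap} is exactly to manufacture one ideal $\Ann_{R}(g)$ capturing all components simultaneously, and the cancellativity of $M$ is what makes the passage from $gf=0$ to $gr_{m}=0$ work. No computation beyond this degree-bookkeeping should be required.
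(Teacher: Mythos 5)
Your proposal is correct and follows essentially the same route as the paper: both invoke Corollary \ref{Coro 3-uc-Kap} to get a nonzero homogeneous $g$ with $gf=0$, use cancellativity of $M$ to split this into $gr_{m}=0$ for every homogeneous component $r_{m}$ of $f$, and then pick a prime containing all the $r_{m}$ (the paper takes one over the graded ideal $\mathrm{C}(f)$, you over $\Ann_{R}(g)$, which is an immaterial difference since $\mathrm{C}(f)\subseteq\Ann_{R}(g)$) to conclude $f\in\mathfrak{p}^{\ast}$.
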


\begin{proof}
If $f=\sum\limits_{m\in M}r_{m}\in Z(R)$ with $r_{m}\in R_{m}$ for all $m$, then by Corollary \ref{Coro 3-uc-Kap}, there exists a (nonzero) homogeneous $g\in R$ such that $fg=0$ and so $r_{m}g=0$ for all $m\in G$. Since $g\neq0$, we get that
$\mathrm{C}(f)$ is a proper ideal of $R$. Thus, there exists a prime ideal $\mathfrak{p}$ of $R$ such that $f\in\mathrm{C}(f)\subseteq\mathfrak{p}$. But $\mathrm{C}(f)\subseteq\mathfrak{p}^{\ast}$, because $\mathrm{C}(f)$ is a graded ideal.
\end{proof}

\begin{remark}\label{Remark 2 iki}
Note that Theorem~\ref{th T-O} cannot be generalized to modules. More precisely, the statement ``if $M$ is an unfaithful module over a $G$-graded ring $R$ with $G$ a totally ordered group, then there exists a nonzero homogeneous $g\in R$ such that $gM=0$" is false.
As a counterexample, $M=(\mathbb{Z}/2)[x]/(1+x)$ is an unfaithful module over the $\mathbb{N}$-graded ring $R=(\mathbb{Z}/2)[x]$. Suppose that there is a nonzero homogeneous $g\in R$ such that $gM=0$. Then $g\in\Ann_{R}(M)=(1+x)$. Clearly, $g=x^{d}$ for some $d\geqslant1$. Hence $x^{d}=(1+x)f(x)$. But $R$ is a UFD and the elements $x$ and $1+x$ are irreducible. 
This yields that $x=1+x$ and so $1=0$ which is a contradiction.
\end{remark}

\section{Units in a graded ring}\label{Sec:Units}

In this section, especially in Theorem \ref{Theorem ix} (and also in Theorem \ref{coro ET new 20}), we give a complete characterization of invertible elements in $G$-graded commutative rings with $G$ is a torsion-free Abelian group.  

\begin{lemma}\label{Lemma I}
Let $f$ be an invertible homogeneous element of an $M$-graded ring $R=\bigoplus\limits_{m\in M}R_{m}$ with $M$  a cancellative monoid. Then $f^{-1}$ is homogeneous with $\dg(f)+\dg(f^{-1})=0$.
\end{lemma}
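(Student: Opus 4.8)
The plan is to read off the homogeneous decomposition of the inverse directly from the two defining equations $ff^{-1}=1$ and $f^{-1}f=1$, exploiting that $M$ is cancellative. Write $a=\dg(f)$, so $f\in R_{a}$ with $f\neq 0$, and expand the inverse into its homogeneous components as $g:=f^{-1}=\sum_{m\in M}g_{m}$ with $g_{m}\in R_{m}$. The first ingredient is Lemma~\ref{Lemma 5-five-bes}: since $M$ is cancellative, $1\in R_{0}$, so both products $fg$ and $gf$ are \emph{homogeneous} of degree $0$. The whole proof is then a comparison of homogeneous components.

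First I would analyze $fg=1$. Expanding, $fg=\sum_{m\in M}fg_{m}$, and each nonzero term $fg_{m}$ lies in $R_{a+m}$. The key use of cancellativity here is that the assignment $m\mapsto a+m$ is injective (left cancellation), so distinct indices $m$ contribute to distinct graded pieces and no cross-cancellation between the terms $fg_{m}$ can occur. Matching with $1\in R_{0}$ therefore forces $fg_{m}=0$ whenever $a+m\neq 0$, while the degree-$0$ piece must equal $1$. Because $1\neq 0$ (the statement is vacuous if $R=0$), there must exist an index, necessarily unique by cancellation, call it $m_{0}$, with $a+m_{0}=0$ and $fg_{m_{0}}=1$. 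Running the identical argument on $gf=1$ produces an index $m_{1}$ with $m_{1}+a=0$ and $g_{m_{1}}f=1$.

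It remains to collapse $g$ to a single homogeneous term. The element $m_{0}$ is a right inverse and $m_{1}$ a left inverse of $a$ in the monoid $M$, so the one-line computation $m_{1}=m_{1}+(a+m_{0})=(m_{1}+a)+m_{0}=m_{0}$ gives $m_{1}=m_{0}$. Hence $g_{m_{0}}$ satisfies $fg_{m_{0}}=g_{m_{0}}f=1$, i.e.\ it is a two-sided inverse of $f$; by uniqueness of inverses in a ring, $g=g_{m_{0}}$. Thus $f^{-1}=g_{m_{0}}$ is homogeneous of degree $m_{0}$, and $\dg(f)+\dg(f^{-1})=a+m_{0}=0$, as claimed. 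I expect the only genuine subtlety to be the bookkeeping that justifies reading off components termwise: one must be sure that cancellativity rules out any overlap among the degrees $a+m$, and, since $M$ is not assumed commutative, that the degree $m_{0}$ extracted from $fg=1$ coincides with the one from $gf=1$ — both points are exactly where the cancellation hypothesis does its work, and neither requires more than the elementary monoid identity above.
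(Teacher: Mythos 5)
Your proposal is correct and follows essentially the same route as the paper: expand $f^{-1}$ into homogeneous components, use cancellativity of $M$ to see that the terms $fg_{m}$ land in pairwise distinct graded pieces, and compare with $1\in R_{0}$ (Lemma~\ref{Lemma 5-five-bes}) to isolate a single surviving component of degree $m_{0}$ with $\dg(f)+m_{0}=0$. The only difference is cosmetic: where you run the argument on both $fg=1$ and $gf=1$ and then appeal to uniqueness of two-sided inverses, the paper uses just $ff^{-1}=1$ and kills the remaining components directly, deducing $g_{m}=0$ from $fg_{m}=0$ by multiplying with $f^{-1}$.
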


\begin{proof} Suppose $f^{-1}=\sum\limits_{m\in M}r_{m}$ where $r_{m}\in R_{m}$ for all $m$. We have $\sum\limits_{m\in M}fr_{m}=1$. Since $M$ has the cancellation property, so on the left hand side of the above equality, $fr_{m}$ is the only homogeneous element of degree $\dg(f)+m$ for all $m\in \Supp(f^{-1})$. Thus there exists a (unique) $d\in M$ such that $fr_{d}=1$ and  $fr_{m}=0$ and so $r_{m}=0$ for all $m\neq d$. This shows that $f^{-1}=r_{d}$ and $\dg(f)+\dg(f^{-1})=0$. 
\end{proof}

\begin{lemma}\label{Theorem V}
Every invertible element of a $G$-graded integral domain with $G$ a torsion-free Abelian group is homogeneous. 
\end{lemma}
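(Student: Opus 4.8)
The plan is to exploit the total order on $G$ together with the integral-domain hypothesis in order to control the extreme homogeneous components of a product. Since $G$ is a torsion-free Abelian group, by the result recalled in Section~\ref{Sec:Prelim} (see \cite[Theorem 6.31]{Lam}) it admits a total ordering compatible with its group operation, so the functions $n_{\ast}$ and $n^{\ast}$ introduced just before Theorem~\ref{th T-O} are available. Let $f$ be an invertible element with inverse $g$, so that $fg=1$; since $G$ is a group it is cancellative, hence $1\in R_{0}$ by Lemma~\ref{Lemma 5-five-bes}, giving $n_{\ast}(1)=n^{\ast}(1)=0$.

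The key step is to upgrade the general inequalities $n_{\ast}(f)+n_{\ast}(g)\leqslant n_{\ast}(fg)$ and $n^{\ast}(fg)\leqslant n^{\ast}(f)+n^{\ast}(g)$ to equalities using that $R$ is a domain. Writing $f=\sum_{n}f_{n}$ and $g=\sum_{m}g_{m}$, the homogeneous component of $fg$ of degree $n^{\ast}(f)+n^{\ast}(g)$ is exactly $f_{n^{\ast}(f)}g_{n^{\ast}(g)}$: because $G$ is totally ordered, this is the only way to write $n^{\ast}(f)+n^{\ast}(g)$ as a sum $n+m$ with $n\in\Supp(f)$ and $m\in\Supp(g)$ (any strictly smaller $n$ would force $m>n^{\ast}(g)$). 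As the two factors are nonzero and $R$ is a domain, this component is nonzero, whence $n^{\ast}(fg)=n^{\ast}(f)+n^{\ast}(g)$. The symmetric argument at the bottom gives $n_{\ast}(fg)=n_{\ast}(f)+n_{\ast}(g)$.

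Applying this to $fg=1$ yields $n^{\ast}(f)+n^{\ast}(g)=0$ and $n_{\ast}(f)+n_{\ast}(g)=0$. Subtracting, $\bigl(n^{\ast}(f)-n_{\ast}(f)\bigr)+\bigl(n^{\ast}(g)-n_{\ast}(g)\bigr)=0$, and since each summand is $\geqslant 0$ (because $n_{\ast}\leqslant n^{\ast}$ always), both must vanish. In particular $n_{\ast}(f)=n^{\ast}(f)$, which by the observation preceding Theorem~\ref{th T-O} means precisely that $f$ is homogeneous (and the same computation shows $g$ is homogeneous as well).

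I expect the only delicate point to be the verification that no cancellation occurs in the extreme-degree components, that is, that the top (respectively bottom) component of $fg$ really is the product of the top (respectively bottom) components of $f$ and $g$. This is where both hypotheses enter essentially: the total ordering guarantees that the extreme degree is attained in a unique way, and the domain hypothesis guarantees that the resulting product of nonzero homogeneous elements does not vanish. Once this is in place, the remainder is a short order-theoretic squeeze.
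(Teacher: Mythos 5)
Your proposal is correct and follows essentially the same route as the paper: both establish the additivity $n_{\ast}(fg)=n_{\ast}(f)+n_{\ast}(g)$ and $n^{\ast}(fg)=n^{\ast}(f)+n^{\ast}(g)$ in a graded domain over the totally ordered group $G$, apply it to $fg=1$, and conclude $n_{\ast}(f)=n^{\ast}(f)$ by an order-theoretic squeeze. The only difference is cosmetic: you spell out the uniqueness-of-extreme-degree verification that the paper leaves implicit, and you finish by summing two nonnegative differences where the paper argues by contradiction.
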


\begin{proof} Let $R$ be a $G$-graded integral domain. 
Then we have $n_{\ast}(fg)=n_{\ast}(f)+n_{\ast}(g)$ and $n^{\ast}(fg)=n^{\ast}(f)+n^{\ast}(g)$ for all nonzero $f,g\in R$. If $fg=1$, then $n_{\ast}(fg)=n^{\ast}(fg)=0$. If $n_{\ast}(f)< n^{\ast}(f)$, then $0=n_{\ast}(f)+n_{\ast}(g)< n^{\ast}(f)+n_{\ast}(g)\leqslant n^{\ast}(f)+n^{\ast}(g)=0$  which is impossible. Therefore, $n_{\ast}(f)=n^{\ast}(f)$, showing that $f$ is homogeneous.
\end{proof}

\begin{remark}\label{remark i nice example}
Note that Lemma \ref{Theorem V} cannot be generalized to $G$-graded rings which are only reduced or whose base subrings $R_{0}$ are integral domains.
For the first case, consider the reduced $\mathbb{Z}$-graded ring $(\mathbb{Z}/6)[x,x^{-1}]$.  The element $g=2x+3x^{-1}$ is clearly not homogeneous, but it is invertible with the inverse
$g^{-1}=2x^{-1}+3x$.
For the second case, consider the associated graded ring $\gr_{\mathfrak{p}}(R)=
\bigoplus\limits_{n\geqslant0}\mathfrak{p}^{n}/\mathfrak{p}^{n+1}=
R/\mathfrak{p}\oplus
\mathfrak{p}/\mathfrak{p}^{2}\oplus\cdots$ where $\mathfrak{p}$ is a prime ideal of a ring $R$ with the property that there is some $f\in\mathfrak{p}\setminus\mathfrak{p}^{2}$ such that $f^{2}\in\mathfrak{p}^{3}$. In this case, $f+\mathfrak{p}^{2}$ is nilpotent, since $(f+\mathfrak{p}^{2})^{2}=f^{2}+\mathfrak{p}^{3}=0$. Thus, the element $(f^{n}+\mathfrak{p}^{n+1})_{n\geqslant0}=
(1+\mathfrak{p},f+\mathfrak{p}^{2},0,0,0,\ldots)$ is not homogeneous, but invertible in $\gr_{\mathfrak{p}}(R)$, because the sum of an invertible element and a nilpotent element is invertible. Finding such a prime ideal is not hard. For instance, in the ring $\mathbb{Z}/4$ we may take $\mathfrak{p}=\{0,2\}$ and $f=2$.
\end{remark} 

\begin{corollary}\label{Coro 17 onyedi} If $M$ is a totally ordered cancellative monoid, then every invertible element of an $M$-graded integral domain is homogeneous.
\end{corollary}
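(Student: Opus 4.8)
The plan is to reduce the statement to Lemma \ref{Theorem V} by the same change-of-grading device already used to deduce Corollaries \ref{Coro 3-uc-Kap} and \ref{Coro 6-altin} from Theorem \ref{th T-O}. First I would let $G$ denote the Grothendieck group of $M$ and recall the canonical morphism of monoids $M\to G$ given by $m\mapsto[m,0]$. Since $M$ is cancellative, this map is injective, so changing the grading along it exhibits the given $M$-graded integral domain $R$ as a $G$-graded ring whose homogeneous components are left unchanged: for $d$ in the image of $M$ one has $S_{d}=R_{m}$, where $m$ is the unique preimage of $d$, and $S_{d}=0$ otherwise.

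Next I would check that the new grading group $G$ is a torsion-free Abelian group. This is exactly the point at which the hypothesis that $M$ is totally ordered is used: by the equivalence recorded in Section \ref{Sec:Prelim} (a commutative cancellative monoid is totally ordered if and only if its Grothendieck group is torsion-free, cf. \cite[\S 2.12, Theorem 22]{Northcott}), the total ordering on $M$ forces $G$ to be torsion-free, while $G$ is Abelian by the very construction of the Grothendieck group.

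With $R$ now realized as a $G$-graded integral domain with $G$ a torsion-free Abelian group, I would simply invoke Lemma \ref{Theorem V} to conclude that every invertible element of $R$ is homogeneous with respect to the $G$-grading. The final step is to translate this conclusion back to the original grading: because the homogeneous components of the two gradings coincide, an element that is homogeneous in the $G$-grading lies in some $S_{d}=R_{m}$, and is therefore homogeneous in the original $M$-grading as well.

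The only point that requires any care — and hence the (mild) main obstacle — is the bookkeeping that homogeneity is genuinely preserved under the change of grading. This hinges entirely on the injectivity of $M\to G$, which is precisely the cancellation hypothesis; were $M$ not cancellative, distinct homogeneous components of $R$ could be merged in $G$ and the transfer of homogeneity would fail. Everything else is a direct appeal to Lemma \ref{Theorem V} together with the monoid/Grothendieck-group dictionary assembled in Section \ref{Sec:Prelim}.
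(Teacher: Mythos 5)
Your proposal is correct and is exactly the paper's argument: pass to the grading by the Grothendieck group $G$ of $M$ (which is torsion-free Abelian because $M$ is totally ordered and cancellative) and apply Lemma \ref{Theorem V}. The extra bookkeeping you supply — injectivity of $M\to G$ ensuring the homogeneous components are unchanged, so homogeneity transfers back to the $M$-grading — is precisely what the paper leaves implicit, and it matches the same device used for Corollaries \ref{Coro 3-uc-Kap} and \ref{Coro 6-altin}.
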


\begin{proof} It follows from Lemma \ref{Theorem V} by passing to the $G$-grading where $G$ is the Grothendieck group of $M$. 
\end{proof}

By $U(R)$ we denote the group of units (invertible elements) of a ring $R$. 

\begin{remark} Let $R$ be an integral domain and $M$ a totally ordered monoid with the cancellation property. Then by Corollary \ref{Coro onalti 16}, $R[M]$ is an integral domain. If $f$ is an invertible element of $R[M]$ then by Corollary \ref{Coro 17 onyedi}, $f=a\epsilon_{x}$ (and by Lemma \ref{Lemma I}, $f^{-1}=a^{-1}\epsilon_{y}$) where $a\in U(R)$ and $x+y=0$ for some $x,y\in M$. But note that if $r\in U(R)$ and $m\in M$ then $r\epsilon_{m}$ is not necessarily invertible in $R[M]$. In fact, $r\epsilon_{m}$ is invertible in $R[M]$ if and only if $r\in U(R)$ and $m$ is invertible in $M$. This observation leads us to the following result.
\end{remark}

\begin{corollary}\label{Coro 2-iki-Kap} If $R$ is an integral domain and $G$ is a torsion-free Abelian group, then the units of $R[G]$ are precisely of the form $r\epsilon_{x}$ with $r\in U(R)$ and $x\in G$.
\end{corollary}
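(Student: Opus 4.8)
The plan is to reduce everything to the two lemmas on homogeneity of units in graded domains, namely Lemma \ref{Theorem V} and Lemma \ref{Lemma I}. First I would record that, since $G$ is a torsion-free Abelian group, it is a totally ordered group (as recalled in Section \ref{Sec:Prelim}), and being a group it is in particular a totally ordered cancellative monoid. To invoke the domain lemma I need $R[G]$ to be an integral domain: I would apply Corollary \ref{Coro onalti 16} to the zero ideal of $R[G]$. This ideal is graded and proper (as $R\neq0$), and it has the required multiplicative property because the homogeneous elements of $R[G]$ are the $a\epsilon_{m}$, their product $(a\epsilon_{m})(b\epsilon_{n})=ab\,\epsilon_{m+n}$ vanishes exactly when $ab=0$ by cancellativity of $G$, and then $a=0$ or $b=0$ since $R$ is a domain. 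Hence $0$ is prime and $R[G]$ is an integral domain.

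Now $R[G]$ is a $G$-graded integral domain with $G$ torsion-free Abelian, so Lemma \ref{Theorem V} shows that every unit $f$ of $R[G]$ is homogeneous, i.e.\ $f=r\epsilon_{x}$ for some $0\neq r\in R$ and $x\in G$. By Lemma \ref{Lemma I} its inverse is homogeneous as well, say $f^{-1}=s\epsilon_{y}$ with $\dg(f)+\dg(f^{-1})=0$. Then $ff^{-1}=rs\,\epsilon_{x+y}=\epsilon_{0}=1$ forces $x+y=0$ and $rs=1$, so $r\in U(R)$; this gives the claimed form of every unit. Conversely, if $r\in U(R)$ and $x\in G$, then the inverse $-x$ exists precisely because $G$ is a group, and $(r\epsilon_{x})(r^{-1}\epsilon_{-x})=\epsilon_{0}=1$, so $r\epsilon_{x}\in U(R[G])$.

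There is essentially no serious obstacle here once $R[G]$ is known to be a domain; the only step requiring a moment's care is verifying that the zero ideal satisfies the hypotheses of Corollary \ref{Coro onalti 16}, which is exactly where the domain property of $R$ and the cancellativity of $G$ are used. Everything else is a direct computation legitimized by the group structure of $G$: unlike the general monoid situation flagged in the remark preceding the statement, every $x\in G$ is automatically invertible, so no extra condition on the degree is needed for $r\epsilon_{x}$ to be a unit.
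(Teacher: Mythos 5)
Your proposal is correct and follows essentially the same route as the paper: establish that $R[G]$ is a $G$-graded integral domain by checking the zero ideal is graded prime, then combine Lemma \ref{Theorem V} (units are homogeneous) with Lemma \ref{Lemma I} (the inverse of a homogeneous unit is homogeneous of opposite degree). The only cosmetic difference is that you invoke Corollary \ref{Coro onalti 16} where the paper applies Lemma \ref{Lemma 2-iki} directly (which is possible since $G$ is already a torsion-free Abelian group), but the former is deduced from the latter, so the underlying argument is identical; you merely spell out the details the paper compresses into ``easily follows.''
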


\begin{proof} We know that $R[G]$ is a $G$-graded ring with the homogeneous components $R\epsilon_{x}$. Then by Lemma \ref{Lemma 2-iki}, the zero ideal of  $R[G]$ is a prime ideal. 
Hence, $R[G]$ is a $G$-graded integral domain. Now the assertion easily follows from Lemmas  \ref{Lemma I} and \ref{Theorem V}. 
\end{proof}

If $G$ is not a torsion-free Abelian group, then the group-ring $K[G]$ is not an integral domain even if $K$ is a field. For example, the group-ring $\mathbb{Q}[\mathbb{Z}/2]$ has nontrivial idempotents $(1/2)(\epsilon_{0}+\epsilon_{1})$ and $(1/2)(\epsilon_{0}-\epsilon_{1})$.

\begin{lemma}\label{Lemma 3-easy}
Let $f=\sum\limits_{k=1}^{n}r_{k}$ be an element of a ring $R$. If $(r_{1},\ldots,r_{n})=R$ and $r_{i}r_{k}$ is nilpotent for all $i\neq k$, then $f$ is invertible in $R$. 
\end{lemma}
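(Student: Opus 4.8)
The plan is to reduce to the reduced quotient $\bar R = R/\mathrm{nil}(R)$, where the hypotheses take a much cleaner form, and then lift invertibility back to $R$. Since $r_i r_k$ is nilpotent whenever $i\neq k$, its image in $\bar R$ vanishes, so the residues $\bar r_1,\ldots,\bar r_n$ satisfy $\bar r_i\bar r_k=0$ for all $i\neq k$; that is, they become \emph{orthogonal}. Moreover $(r_1,\ldots,r_n)=R$ furnishes a B\'ezout relation $\sum_j c_j r_j=1$ in $R$, which reduces to $\sum_j \bar c_j\bar r_j=1$ in $\bar R$.

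The key computational step is to square this relation. Because all cross terms $\bar r_i\bar r_k$ with $i\neq k$ vanish, we obtain $1=\big(\sum_j \bar c_j\bar r_j\big)^2=\sum_j \bar c_j^{\,2}\,\bar r_j^{\,2}$. For the same reason the image $\bar f=\sum_k\bar r_k$ satisfies $\bar f\,\bar r_j=\sum_k\bar r_k\bar r_j=\bar r_j^{\,2}$ for each $j$. Combining these two facts gives $\bar f\cdot\sum_j \bar c_j^{\,2}\,\bar r_j=\sum_j \bar c_j^{\,2}\,\bar r_j^{\,2}=1$, so $\bar f$ is invertible in $\bar R$ with explicit inverse $\sum_j \bar c_j^{\,2}\,\bar r_j$.

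Finally I would transport invertibility from $\bar R$ back to $R$. Taking $h=\sum_j c_j^{\,2}r_j$ as a lift of this inverse, the previous step reads $fh=1+z$ for some $z\in\mathrm{nil}(R)$. Since $z$ is nilpotent, $1+z$ is a unit (its inverse is the terminating series $1-z+z^2-\cdots$), whence $f\cdot h(1+z)^{-1}=1$ and $f$ is invertible in $R$. Equivalently, one may simply note that $\mathrm{nil}(R)$ is contained in the Jacobson radical, so an element of $R$ is a unit exactly when its image in $\bar R$ is.

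The only point needing care is commutativity: both passing to $\bar R=R/\mathrm{nil}(R)$ and collecting the squared B\'ezout relation use that $\mathrm{nil}(R)$ is an ideal and that the $r_j$ commute --- precisely the setting (graded commutative rings, as in Theorems \ref{Theorem ix} and \ref{coro ET new 20}) in which this lemma is applied. I expect no serious obstacle; the entire content is the observation that squaring annihilates the cross terms and thereby converts the nilpotency hypothesis into an orthogonality relation among the residues $\bar r_j$.
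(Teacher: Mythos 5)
Your proof is correct, but it takes a genuinely different route from the paper's. The paper argues by contradiction via prime avoidance: if $f$ were not a unit, then $Rf\subseteq\mathfrak{p}$ for some prime ideal $\mathfrak{p}$; since $(r_{1},\ldots,r_{n})=R$, some $r_{k}\notin\mathfrak{p}$; nilpotency of the cross products then forces $r_{i}\in\mathfrak{p}$ for all $i\neq k$, whence $r_{k}=f-\sum_{i\neq k}r_{i}\in\mathfrak{p}$, a contradiction. You instead work in the reduced quotient $\bar R=R/\mathrm{nil}(R)$, where the hypothesis becomes orthogonality $\bar r_{i}\bar r_{k}=0$, square the B\'ezout relation to kill the cross terms, read off the explicit inverse $\sum_{j}\bar c_{j}^{\,2}\bar r_{j}$ of $\bar f$, and then lift the unit through the nilradical. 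The paper's argument is shorter, but yours buys two things: it is constructive, exhibiting the inverse $\bigl(\sum_{j}c_{j}^{\,2}r_{j}\bigr)(1+z)^{-1}$ as a finite expression rather than invoking the existence of a prime ideal containing a proper ideal (which rests on Zorn's lemma), and it isolates the conceptual content of the hypothesis --- modulo nilpotents, $f$ is a sum of mutually orthogonal elements that generate the unit ideal. Note that both proofs use commutativity in an essential way (yours so that $\mathrm{nil}(R)$ is an ideal and the cross terms can be collected; the paper's in the step passing from $r_{i}r_{k}\in\mathfrak{p}$, $r_{k}\notin\mathfrak{p}$ to $r_{i}\in\mathfrak{p}$, and in equating $Rf=R$ with invertibility), so despite the statement's phrasing ``a ring $R$'', neither argument extends as written beyond the commutative setting in which the lemma is applied.
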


\begin{proof} It suffices to show that $Rf=R$. If $Rf\neq R$, then $Rf\subseteq\mathfrak{p}$ for some prime ideal $\mathfrak{p}$ of $R$. But there exists some $k$ such that $r_{k}\notin\mathfrak{p}$. By assumption, $r_{i}r_{k}\in\mathfrak{p}$ and hence $r_{i}\in\mathfrak{p}$ for all $i\neq k$. Thus, $r_{k}=f-\sum\limits_{i\neq k}r_{i}\in\mathfrak{p}$ which is a contradiction.
\end{proof}

\begin{theorem}\label{Theorem ix}
Let $f=\sum\limits_{k\in G}r_{k}$ be an element of a $G$-graded ring $R=\bigoplus\limits_{k\in G}R_{k}$ with $G$ a torsion-free Abelian group and $r_{k}\in R_{k}$ for all $k$. Then $f$ is invertible in $R$ if and only if $(r_{k}: k\in G)=R$ and $r_{i}r_{k}$ is nilpotent for all $i\neq k$.
\end{theorem}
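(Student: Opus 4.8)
The plan is to treat the two implications separately, with the ``if'' direction being essentially a citation and the ``only if'' direction carrying the real content. For the ``if'' direction, note that $\Supp(f)$ is finite, so $f$ is a finite sum of its nonzero homogeneous components, say $f=\sum_{k=1}^{n}r_{k}$ with the $r_{k}$ ranging over those components. The hypotheses $(r_{k}:k\in G)=R$ and nilpotency of every $r_{i}r_{k}$ with $i\neq k$ are precisely those of Lemma \ref{Lemma 3-easy}, which then yields that $f$ is invertible. Thus this direction requires no new work.

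For the converse, assume $f$ is invertible. The condition $(r_{k}:k\in G)=R$ is immediate: the ideal $(r_{k}:k\in G)$ is precisely $\mathrm{C}(f)$, and since $f=\sum_{k}r_{k}\in\mathrm{C}(f)$ is a unit, we get $\mathrm{C}(f)=R$.

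The substantive point is the nilpotency of $r_{i}r_{k}$ for $i\neq k$, and the idea is to test it one minimal prime at a time. Since the nilradical of $R$ is the intersection of all its minimal primes, it suffices to show $r_{i}r_{k}\in\mathfrak{p}$ for every minimal prime $\mathfrak{p}$ and every $i\neq k$. By Corollary \ref{Coro 14-ondort}, each minimal prime $\mathfrak{p}$ is a graded ideal; hence $R/\mathfrak{p}$ is a $G$-graded integral domain in which the homogeneous component of degree $k$ is the image of $R_{k}$, and the homogeneous components of the image $\bar{f}$ of $f$ are exactly the classes $\overline{r_{k}}$. The image of a unit is a unit, so $\bar{f}$ is invertible in $R/\mathfrak{p}$; since $G$ is torsion-free Abelian, Lemma \ref{Theorem V} forces $\bar{f}$ to be homogeneous. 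Therefore at most one class $\overline{r_{k}}$ is nonzero, i.e. $r_{k}\in\mathfrak{p}$ for all but at most one index. In particular, given $i\neq k$, at least one of $r_{i},r_{k}$ lies in $\mathfrak{p}$, whence $r_{i}r_{k}\in\mathfrak{p}$. As $\mathfrak{p}$ was an arbitrary minimal prime, $r_{i}r_{k}$ lies in the nilradical and is therefore nilpotent.

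The step I expect to require the most care is the reduction to the graded domains $R/\mathfrak{p}$: it relies essentially on the fact that minimal primes are graded, which is Corollary \ref{Coro 14-ondort} and fails without the torsion-free hypothesis, and on the homogeneity of units in graded domains (Lemma \ref{Theorem V}), which is likewise false for torsion groups. Both ingredients are exactly where the assumption that $G$ be a torsion-free Abelian group is consumed; the ring $\mathbb{Q}[\mathbb{Z}/2]$ shows that the conclusion genuinely breaks down otherwise.
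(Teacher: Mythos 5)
Your proposal is correct and follows essentially the same route as the paper's proof: the ``if'' direction via Lemma \ref{Lemma 3-easy}, the equality $(r_{k}:k\in G)=R$ from $f$ being a unit lying in that ideal, and the nilpotency of $r_{i}r_{k}$ by passing to $R/\mathfrak{p}$ for each minimal prime $\mathfrak{p}$, using Corollary \ref{Coro 14-ondort} to see that $\mathfrak{p}$ is graded and Lemma \ref{Theorem V} to force the image of $f$ to be homogeneous. Your closing remarks on where the torsion-free hypothesis is consumed match the paper's own discussion, so no changes are needed.
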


\begin{proof}
If $f$ is invertible in $R$ then 
$R=Rf\subseteq(r_{k}: k\in G)\subseteq R$ and so $(r_{k}: k\in G)=R$. To prove that $r_{i}r_{k}$ is nilpotent for $i\neq k$, it suffices to show that $r_{i}r_{k}\in\mathfrak{p}$
for every minimal prime ideal $\mathfrak{p}$ of $R$. 
Since $G$ is a totally ordered group, thus by Corollary \ref{Coro 14-ondort}, every minimal prime ideal of $R$ is a graded ideal. Hence, $R/\mathfrak{p}$ is a $G$-graded integral domain. Clearly $f+\mathfrak{p}$ is invertible in $R/\mathfrak{p}$.
Thus, by Lemma~\ref{Theorem V}, there exists some $\ell\in G$ such that $r_{n}\in\mathfrak{p}$ for all $n\neq\ell$.
Then $r_{i}$ or $r_{k}$ is always a member of $\mathfrak{p}$ and so $r_{i}r_{k}\in\mathfrak{p}$ for all $i\neq k$. The reverse implication follows from Lemma \ref{Lemma 3-easy}.
\end{proof}

\begin{corollary}\label{Coro 12-oniki}
Let $f=\sum\limits_{x\in M}r_{x}$ be an invertible element of an $M$-graded ring $R=\bigoplus\limits_{x\in M}R_{x}$ with $M$ a totally ordered cancellative monoid and $r_{x}\in R_{x}$ for all $x$. If $r_{0}$ is invertible in $R$, then $r_{x}$ is nilpotent for all $x\neq0$.
\end{corollary}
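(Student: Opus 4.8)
The plan is to deduce this directly from Theorem \ref{Theorem ix} after transporting the grading from $M$ to a group. First I would pass to the $G$-grading, where $G$ denotes the Grothendieck group of $M$; since $M$ is a totally ordered cancellative monoid, $G$ is a torsion-free Abelian group, and the canonical morphism $M\rightarrow G$ is injective (because $M$ has the cancellation property). Changing the grading along this injection therefore leaves the homogeneous components unchanged, so $R$ becomes a $G$-graded ring in which the component of degree $0\in G$ is exactly the original $R_{0}$, and $f=\sum_{x\in M}r_{x}$ remains written with the same homogeneous pieces $r_{x}$, now indexed by the images of the $x$ in $G$.

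Next, since $f$ is invertible and $G$ is a torsion-free Abelian group, Theorem \ref{Theorem ix} applies and yields that $r_{i}r_{k}$ is nilpotent for all $i\neq k$. In particular, taking $k=0$, the product $r_{x}r_{0}$ is nilpotent for every $x\neq0$; here I am using that the injectivity of $M\rightarrow G$ keeps distinct indices distinct, so each pair $(x,0)$ with $x\neq0$ really is an ``off-diagonal'' pair in the group grading.

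Finally I would invoke the hypothesis that $r_{0}$ is invertible. Writing $r_{x}=(r_{x}r_{0})r_{0}^{-1}$ exhibits $r_{x}$ as a product of the nilpotent element $r_{x}r_{0}$ with the unit $r_{0}^{-1}$; in a commutative ring such a product is again nilpotent, so $r_{x}$ is nilpotent for all $x\neq0$, as desired. I do not expect any genuine obstacle here: the only points needing a moment's care are checking that the change of grading preserves both the base component $R_{0}$ and the off-diagonal nature of the pairs $(x,0)$, and observing that the invertibility of $r_{0}$ is precisely what upgrades ``$r_{x}r_{0}$ nilpotent'' to ``$r_{x}$ nilpotent'' (without it, Theorem \ref{Theorem ix} controls only the pairwise products, not the individual components).
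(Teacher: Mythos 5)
Your proposal is correct and follows essentially the same route as the paper: pass to the $G$-grading via the Grothendieck group of $M$ (which is torsion-free Abelian and leaves the homogeneous components unchanged by cancellativity), apply Theorem \ref{Theorem ix} to get that $r_{0}r_{x}$ is nilpotent for $x\neq0$, and then use the invertibility of $r_{0}$ to conclude that $r_{x}$ itself is nilpotent. Your write-up merely makes explicit the two points the paper leaves implicit (injectivity of $M\rightarrow G$ preserving the indexing, and the unit $r_{0}^{-1}$ upgrading the nilpotency), so there is nothing to add.
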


\begin{proof} Clearly $f$ is invertible in the $G$-graded ring $R$ where $G$ is the Grothendieck group of $M$ which is a totally ordered group. Thus by Theorem \ref{Theorem ix}, $r_{0}r_{x}$ and so $r_{x}$ are nilpotent for all $x\neq0$. 
\end{proof}

\begin{example} The above results (Lemma \ref{Theorem V}, Theorem \ref{Theorem ix} and Corollary \ref{Coro 12-oniki}) fail to hold if $G$ is not a torsion-free Abelian group. For example, $f=(1/3)(\epsilon_{0}+\epsilon_{1}+\epsilon_{2})$ is an idempotent element of the group-ring $R=\mathbb{Q}[\mathbb{Z}/3]=\mathbb{Q}\epsilon_{0}+
\mathbb{Q}\epsilon_{1}+\mathbb{Q}\epsilon_{2}$ which is a $\mathbb{Z}/3$-graded ring. Hence, $1-2f=(1/3)(\epsilon_{0}-2\epsilon_{1}-2\epsilon_{2})$ is invertible in $R$ which is neither homogeneous nor its distinct double products are nilpotent (recall that if $e$ is an idempotent of a ring then $1-2e$ is invertible, because $(1-2e)^{2}=1$).  
\end{example}

Recall that in any ring, the sum of an invertible element and a nilpotent element is an invertible element. 

\begin{corollary}\label{Coro 13-onuc}
Let $f=\sum\limits_{x\in M}r_{x}$ be an element of an $M$-graded ring $R=\bigoplus\limits_{x\in M}R_{x}$ with $M$ a totally ordered cancellative monoid and $r_{x}\in R_{x}$ for all $x$. If the identity element of $M$ is the smallest member, then $f$ is invertible in $R$ if and only if  $r_{0}$ is invertible in $R_{0}$ and $r_{x}$ is nilpotent for all $x\neq0$.
\end{corollary}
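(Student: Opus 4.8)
The plan is to prove the two implications separately. I would dispose of the reverse implication first, as it follows immediately from the machinery already in place, and then treat the forward implication by exploiting the extra hypothesis that the identity $0$ is the smallest element of $M$.

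For the reverse direction, suppose $r_0$ is invertible in $R_0$ and $r_x$ is nilpotent for every $x \neq 0$. Since $R_0$ is a subring of $R$ containing the identity (Lemma~\ref{Lemma 5-five-bes}), $r_0$ is invertible in $R$ as well. The difference $f - r_0 = \sum_{x \neq 0} r_x$ is a finite sum of nilpotent elements and hence nilpotent, so $f$ is the sum of an invertible element and a nilpotent element and is therefore invertible.

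For the forward direction, the key step is to compare the homogeneous components of degree $0$ on both sides of $f f^{-1} = 1$. Writing $f^{-1} = \sum_{y \in M} s_y$ with $s_y \in R_y$, and noting that $1 \in R_0$ by Lemma~\ref{Lemma 5-five-bes}, the degree-$0$ component of $f f^{-1}$ must equal $1$. That component is $\sum_{x+y=0} r_x s_y$, the sum running over $x,y \in M$ with $x+y=0$. Here the order hypothesis is decisive: if $x + y = 0$ with $x, y \geq 0$, then $0 = x + y \geq x \geq 0$ forces $x = 0$, and symmetrically $y = 0$, so the only surviving term is $r_0 s_0$. Thus $r_0 s_0 = 1$, and comparing the degree-$0$ components of $f^{-1} f = 1$ gives $s_0 r_0 = 1$; hence $r_0$ is invertible in $R$ with inverse $s_0 \in R_0$, i.e. invertible in $R_0$.

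Once $r_0$ is known to be invertible, I would finish by invoking Corollary~\ref{Coro 12-oniki}: $f$ is an invertible element of the $M$-graded ring $R$ whose component $r_0$ of degree $0$ is invertible, so $r_x$ is nilpotent for all $x \neq 0$. The only genuinely delicate point — and the place where the hypothesis that $0$ is the smallest element is indispensable — is the collapse of the convolution $\sum_{x+y=0} r_x s_y$ to the single product $r_0 s_0$; without this hypothesis a homogeneous unit of nonzero degree (as in the Laurent-type examples of Remark~\ref{remark i nice example}) would already contradict the claimed conclusion.
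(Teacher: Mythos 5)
Your proof is correct and follows essentially the same route as the paper's: extract the degree-zero component of $ff^{-1}=1$, use the hypothesis that $0$ is the smallest element of $M$ to collapse the convolution sum to $r_{0}r'_{0}$, and then invoke Corollary~\ref{Coro 12-oniki} to get nilpotency of the components of nonzero degree. Your additional remarks (the two-sided identity $s_{0}r_{0}=1$ and the explicit argument for the reverse implication, which the paper dismisses as clear) are harmless refinements of the same argument.
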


\begin{proof} If $f$ is invertible in $R$ then $fg=1$ for some $g=\sum\limits_{x\in M}r'_{x}\in R$. It follows that $r_{0}r'_{0}+\sum
\limits_{\substack{x+y=0,\\(x,y)\neq(0,0)}}r_{x}r'_{y}=1$. But if $(x,y)\neq(0,0)$ for some $x,y\in M$ then $0<x+y$, because the identity element of $M$ is the smallest member. This yields that $\sum
\limits_{\substack{x+y=0,\\(x,y)\neq(0,0)}}r_{x}r'_{y}=0$ and so $r_{0}r'_{0}=1$. Then by Corollary \ref{Coro 12-oniki}, $r_{x}$ is nilpotent for all $x\neq0$. The reverse implication is clear.   
\end{proof}

\begin{corollary}\label{Theorem I}
Let $f=\sum\limits_{n\geqslant0}r_{n}$ be an element of an $\mathbb{N}$-graded ring $R=\bigoplus\limits_{n\geqslant0}R_{n}$ with $r_n \in R_n$ for all $n$. Then $f$ is invertible in $R$ if and only if $r_{0}$ is invertible in $R_{0}$ and $r_{n}$ is nilpotent for all $n\geqslant1$.
\end{corollary}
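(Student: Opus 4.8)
The plan is to obtain this as the special case $M=\mathbb{N}$ of Corollary~\ref{Coro 13-onuc}. First I would verify that the additive monoid $\mathbb{N}=\{0,1,2,\ldots\}$ meets every hypothesis of that corollary: it is cancellative, its usual total order $\leqslant$ is compatible with addition, and its identity element $0$ is the smallest member. All three properties are immediate, so Corollary~\ref{Coro 13-onuc} applies verbatim to the $\mathbb{N}$-graded ring $R=\bigoplus_{n\geqslant0}R_{n}$. Under this identification the conclusion ``$r_{0}$ is invertible in $R_{0}$ and $r_{x}$ is nilpotent for all $x\neq0$'' specializes to ``$r_{0}$ is invertible in $R_{0}$ and $r_{n}$ is nilpotent for all $n\geqslant1$'', which is exactly the asserted equivalence.

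Since this is a pure specialization, there is no genuine obstacle: the single point to confirm is that $\mathbb{N}$ is a totally ordered cancellative monoid whose identity is its least element, and this is precisely what makes the hypothesis ``the identity element of $M$ is the smallest member'' hold automatically for $\mathbb{N}$. For orientation I would also note how the underlying mechanism reads in this case. In the forward direction, writing an inverse $g=\sum_{n\geqslant0}r'_{n}$ and comparing the degree-$0$ parts of $fg=1$ forces $r_{0}r'_{0}=1$ (every product $r_{x}r'_{y}$ with $(x,y)\neq(0,0)$ sits in a strictly positive degree, as $0$ is the least element), after which Corollary~\ref{Coro 12-oniki} delivers the nilpotence of the $r_{n}$ with $n\geqslant1$; the reverse direction is simply the observation that $f=r_{0}+\sum_{n\geqslant1}r_{n}$ is a unit plus a nilpotent, hence a unit.

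Finally, I would point out that Corollary~\ref{Theorem I} recovers the classical criterion for invertibility in a polynomial ring $R[x]$ as the instance $R_{n}=Rx^{n}$, where it reads: $f=\sum_{n\geqslant0}a_{n}x^{n}$ is a unit if and only if $a_{0}$ is a unit and $a_{1},\ldots$ are nilpotent. Thus no new argument is needed beyond quoting Corollary~\ref{Coro 13-onuc}.
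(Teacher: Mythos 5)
Your proposal is correct and matches the paper's own proof exactly: the paper also derives Corollary~\ref{Theorem I} as the immediate specialization of Corollary~\ref{Coro 13-onuc} to $M=\mathbb{N}$, whose identity element is its least member. Your additional remarks on the degree-zero comparison and the polynomial-ring instance are accurate but not needed beyond citing Corollary~\ref{Coro 13-onuc}.
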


\begin{proof}
It follows from Corollary \ref{Coro 13-onuc}.
\end{proof}

The following two results are immediate consequences of Corollary~\ref{Theorem I}.

\begin{corollary}\label{Coro in contrast with Z}
In an $\mathbb{N}$-graded ring, every invertible homogeneous element is of degree zero.
\end{corollary}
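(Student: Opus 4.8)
The plan is to read the statement off directly from Corollary~\ref{Theorem I}, which already classifies the invertible elements of an $\mathbb{N}$-graded ring $R=\bigoplus_{n\geqslant0}R_{n}$ by their homogeneous components. First I would fix an invertible homogeneous element $f$ of degree $d$ and record what homogeneity means in the canonical decomposition $f=\sum_{n\geqslant0}r_{n}$ with $r_{n}\in R_{n}$: since $f$ is homogeneous of degree $d$, exactly one component is nonzero, namely $r_{d}=f$, while $r_{n}=0$ for every $n\neq d$. The whole argument is then a case distinction on whether $d=0$ or $d\geqslant1$, and the goal is to exclude the latter.

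The substantive case is $d\geqslant1$. Applying Corollary~\ref{Theorem I} to the invertible element $f$, the criterion forces the degree-$d$ component $r_{d}=f$ to be nilpotent (as $d\geqslant1$). But $f$ is simultaneously invertible, and in any nonzero ring no element can be both a unit and nilpotent: if $f^{k}=0$ and $f$ is invertible, then multiplying by $(f^{-1})^{k}$ yields $1=0$, collapsing the ring. Since a homogeneous element is nonzero by definition, $R$ is not the zero ring, so this is a contradiction. Hence $d\geqslant1$ is impossible, and therefore $d=0$, which is the claim.

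I expect no genuine obstacle here, since this is an immediate corollary; the only point requiring a moment of care is confirming that the hypothesis ``$f$ is homogeneous'' already rules out the degenerate zero ring, so that the unit-versus-nilpotent dichotomy may legitimately be invoked. Equivalently, one could instead use the other clause of Corollary~\ref{Theorem I}: for $d\geqslant1$ the degree-zero component is $r_{0}=0$, which the criterion would require to be invertible in $R_{0}$, again forcing $1=0$ and the same contradiction.
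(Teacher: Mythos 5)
Your proof is correct and follows the paper's approach exactly: the paper derives this corollary as an immediate consequence of Corollary~\ref{Theorem I}, and your argument (a homogeneous unit of degree $d\geqslant1$ would have to be nilpotent by that criterion, contradicting invertibility in the nonzero ring $R$) is precisely the fleshed-out version of that deduction. The care you take with the zero-ring degeneracy is a nice touch, though the paper's convention that homogeneous elements are nonzero already handles it, as you note.
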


\begin{corollary}\label{Corollary ii}
In a reduced $\mathbb{N}$-graded ring, every invertible element is homogeneous of degree zero.
\end{corollary}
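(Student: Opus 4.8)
The plan is to obtain this immediately from the characterization of invertibility supplied by Corollary~\ref{Theorem I}. Writing an invertible element as $f=\sum_{n\geqslant0}r_{n}$ with $r_{n}\in R_{n}$, that corollary guarantees that $r_{0}$ is invertible in $R_{0}$ and, more importantly for the present purpose, that $r_{n}$ is nilpotent for every $n\geqslant1$. I would take this decomposition as the starting point and make no further use of the degree-zero term beyond noting that it is already homogeneous of degree zero.

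The one substantive step is then to invoke the reducedness hypothesis. By definition a reduced ring contains no nonzero nilpotent elements, so each nilpotent component $r_{n}$ with $n\geqslant1$ must vanish. Hence $f=r_{0}\in R_{0}$, which is homogeneous of degree zero, as claimed. I do not anticipate any real obstacle here: essentially all of the difficulty is already absorbed into Corollary~\ref{Theorem I}, and the sole function of the reducedness assumption is to promote ``nilpotent'' to ``zero'' in the positive-degree pieces. (Once one knows that $f$ is homogeneous, one could alternatively finish by quoting Corollary~\ref{Coro in contrast with Z} to pin down the degree; but reading the degree off $r_{0}$ directly is the most economical route.)
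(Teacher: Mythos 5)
Your proof is correct and is exactly the argument the paper intends: it derives the corollary directly from Corollary~\ref{Theorem I}, using reducedness to force the nilpotent components $r_{n}$ ($n\geqslant1$) to vanish, so that $f=r_{0}\in R_{0}$. Nothing further is needed.
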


\begin{corollary} An element of the polynomial ring $R[x_{k}: k\in I]$ is invertible if and only if its constant term in invertible in $R$ and all the remaining coefficients are nilpotent. 
\end{corollary}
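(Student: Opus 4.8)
The plan is to recognize $R[x_{k}: k\in I]$ as a monoid-ring and then apply Corollary~\ref{Coro 13-onuc}. Recall from the discussion preceding Corollary~\ref{Coro TM 18} that by definition $R[x_{k}: k\in I]=R[M]$, where $M=\bigoplus\limits_{k\in I}\mathbb{N}$ and $x_{k}=\epsilon_{s_{k}}$ for the standard generator $s_{k}$ supported at $k$. Viewing $R[M]$ as an $M$-graded ring, I would note that its constant term is the coefficient $c_{0}\in R$ attached to $\epsilon_{0}$, sitting in $R_{0}=R\epsilon_{0}\cong R$, while the remaining coefficients are those attached to $\epsilon_{x}$ with $x\neq0$. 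Thus everything reduces to checking that $M$ is a totally ordered cancellative monoid whose identity $0$ is the smallest element, for then Corollary~\ref{Coro 13-onuc} applies verbatim.

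Cancellation is immediate, since $M$ is a submonoid of the group $\bigoplus\limits_{k\in I}\mathbb{Z}$ and every submonoid of a group is cancellative. To build the order, I would first well-order the index set $I$. For a nonzero $a=(a_{k})_{k\in I}\in\bigoplus\limits_{k\in I}\mathbb{Z}$ (necessarily of finite support), let $k_{0}$ be the least index with $a_{k_{0}}\neq0$, and declare $a>0$ exactly when $a_{k_{0}}>0$. This lexicographic rule gives a total order compatible with addition on $\bigoplus\limits_{k\in I}\mathbb{Z}$, and hence on $M$. Moreover, if $x\in M\setminus\{0\}$ then all coordinates of $x$ are non-negative and at least one is positive, so its leading coordinate is positive and $x>0$; thus $0$ is the least element of $M$.

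With these facts in hand, Corollary~\ref{Coro 13-onuc} says that $f=\sum\limits_{x\in M}c_{x}\epsilon_{x}$ (with $c_{x}\in R$) is invertible in $R[M]$ if and only if the degree-zero component $c_{0}\epsilon_{0}$ is invertible in $R_{0}$ and the component $c_{x}\epsilon_{x}$ is nilpotent for every $x\neq0$. Since $M$ is cancellative, $(c_{x}\epsilon_{x})^{n}=c_{x}^{n}\epsilon_{nx}$, so $c_{x}\epsilon_{x}$ is nilpotent precisely when $c_{x}$ is nilpotent in $R$; likewise, under $R_{0}\cong R$ the invertibility of $c_{0}\epsilon_{0}$ is the invertibility of the constant term $c_{0}$. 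This is exactly the claimed characterization. I expect the only genuinely non-routine point to be the construction of the order on $M$ when $I$ is infinite, which leans on well-ordering $I$ (hence the axiom of choice) and the lexicographic comparison; the rest is a direct translation through the monoid-ring dictionary.
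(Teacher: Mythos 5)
Your proof is correct and follows essentially the same route as the paper: identify $R[x_{k}:k\in I]$ with the monoid-ring $R[M]$ for $M=\bigoplus_{k\in I}\mathbb{N}$, observe that $M$ is a totally ordered cancellative monoid (via the lexicographic order) whose identity is the smallest element, and apply Corollary~\ref{Coro 13-onuc}, using that each $\epsilon_{x}$ is a non-zero-divisor to pass between homogeneous components and coefficients. The only difference is that you spell out the well-ordering of $I$ and the verification of the lexicographic order, details the paper leaves implicit.
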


\begin{proof} The identity element of the additive monoid $M=\bigoplus\limits_{k\in I}\mathbb{N}$ is the smallest element with respect to the lexicographical ordering. We know that $R[x_{k}: k\in I]=R[M]$. Hence the assertion follows from Corollary \ref{Coro 13-onuc} with taking into account that each variable $x_{k}$ is a non-zero-divisor. 
\end{proof}

\begin{corollary}\label{coro 1 old}
Let $R$ be an $\mathbb{N}$-graded ring. Then $R$ is reduced if and only if $R_{0}$ is reduced and $U(R_{0})=U(R)$.
\end{corollary}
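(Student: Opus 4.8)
The plan is to prove both implications using Corollary~\ref{Theorem I}, which characterizes the invertible elements of an $\mathbb{N}$-graded ring as precisely those $f=\sum_{n\geqslant0}r_n$ whose degree-zero component $r_0$ is invertible in $R_0$ and whose higher components $r_n$ ($n\geqslant1$) are nilpotent. The key observation tying reducedness to this characterization is that $R$ is reduced exactly when it has no nonzero nilpotent homogeneous elements of positive degree (beyond whatever nilpotents $R_0$ already carries), so I expect the whole argument to revolve around translating ``$r_n$ nilpotent'' into ``$r_n=0$'' once $R$ is known to be reduced.

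For the forward direction, suppose $R$ is reduced. Then $R_0$, being a subring of $R$, is reduced. For the equality $U(R_0)=U(R)$, I would first note the trivial inclusion $U(R_0)\subseteq U(R)$ (a unit of $R_0$ with inverse in $R_0$ is a unit of $R$, since $1\in R_0$ by Lemma~\ref{Lemma 5-five-bes}). For the reverse inclusion, take any $f\in U(R)$ and write $f=\sum_{n\geqslant0}r_n$. By Corollary~\ref{Theorem I}, $r_0\in U(R_0)$ and each $r_n$ with $n\geqslant1$ is nilpotent; but $R$ is reduced, so every $r_n=0$ for $n\geqslant1$, whence $f=r_0\in U(R_0)$. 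This shows $U(R)\subseteq U(R_0)$ and completes the forward direction.

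For the converse, assume $R_0$ is reduced and $U(R_0)=U(R)$; I must show $R$ is reduced. The main obstacle here is that reducedness of $R_0$ alone does not obviously control nilpotents living in positive degrees, so the hypothesis $U(R_0)=U(R)$ must be leveraged to rule them out. My plan is to argue by contraposition: suppose $R$ has a nonzero nilpotent element, and produce a unit of $R$ lying outside $R_0$. If $a\in R$ is a nonzero nilpotent, then $1+a$ is invertible (the sum of a unit and a nilpotent is a unit). I would like to arrange that $1+a\notin R_0$, contradicting $U(R)=U(R_0)\subseteq R_0$. Since $R_0$ is reduced, any nonzero nilpotent $a$ must have a nonzero homogeneous component in some positive degree; I would reduce to such a homogeneous nilpotent $t$ of degree $d\geqslant1$ (a single homogeneous component of $a$, which is itself nilpotent because $R$ is $\mathbb{N}$-graded and the top nilpotent component survives, or more safely by invoking Corollary~\ref{cor:HomComZDisZD}-style homogeneity arguments, though here I can extract the component directly). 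Then $1+t$ is a unit with a nonzero component in degree $d\geqslant1$, so $1+t\notin R_0$, contradicting $U(R)=U(R_0)$. Hence $R$ has no nonzero nilpotents and is reduced.

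The step I expect to be delicate is the extraction of a positive-degree homogeneous nilpotent from an arbitrary nonzero nilpotent $a$ when $R_0$ is reduced: one must ensure that discarding the (possibly nonzero but forced-to-vanish) degree-zero part still leaves a genuine nilpotent. I would handle this by noting that the degree-zero component $a_0$ of $a$ is itself nilpotent (it is the image of $a$ under the ring projection onto $R_0$, which is a ring homomorphism), hence $a_0=0$ by reducedness of $R_0$; thus $a$ is concentrated in positive degrees, and its top-degree component is nilpotent and nonzero, giving the desired $t$.
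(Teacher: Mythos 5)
Your proof is correct and follows essentially the same route as the paper: the forward direction is read off from Corollary~\ref{Theorem I}, and the converse rests on the observation that $1+t$ is invertible for a homogeneous nilpotent $t$ of positive degree, which the hypothesis $U(R_{0})=U(R)$ forces to lie in $R_{0}$, whence $t=0$. The only difference is one of detail: you explicitly justify the reduction to a homogeneous positive-degree nilpotent (via the degree-zero projection homomorphism and the top-component argument), a step the paper compresses into ``it suffices to show.''
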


\begin{proof}
The ``only if'' statement follows from Corollary~\ref{Theorem I}.
Conversely, it suffices to show that every nilpotent homogeneous element $f\in R$ of positive degree is zero. Clearly, $1+f$ is invertible in $R$. Thus, by assumption, $f=0$.
\end{proof}

\section{Idempotents in a graded ring}

In this section, we give a complete characterization of idempotent elements in a $G$-graded commutative ring with $G$ is an Abelian group. 

It is well-known that every idempotent element of an $\mathbb{N}$-graded ring $R=\bigoplus\limits_{n\geqslant0}R_{n}$ is contained in the base subring $R_{0}$. In the following theorem we will show that the same assertion holds for $\mathbb{Z}$-graded rings and more generally for $G$-graded rings with $G$ a torsion-free Abelian group. But note that the $\mathbb{N}$-graded case argument no longer works here (the proof is a bit more subtle). Our theorem also gives a natural and quite simple proof for Kirby's theorem \cite[Theorem 1]{Kirby}, which was already proved by very difficult methods and as if with the additional ``strongly graded'' (i.e. $R_{m}R_{n}=R_{m+n}$) assumption.

\begin{theorem}\label{Lemma T-K} Every idempotent element of a $G$-graded ring  $R=\bigoplus\limits_{n\in G}R_{n}$ with $G$ a torsion-free Abelian group is contained in the base subring $R_{0}$.
\end{theorem}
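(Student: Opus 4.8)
The plan is to reduce modulo the nilradical, settle the statement in the reduced graded quotient using the homogeneity of idempotents over graded domains, and then transport the conclusion back via uniqueness of idempotent lifting; throughout I use that $R$ is commutative (as in this section). First I would pass to $\bar{R}:=R/N$, where $N$ is the nilradical of $R$. By Corollary \ref{Coro 14-ondort}, $N$ is a graded ideal, so $\bar{R}=\bigoplus_{n\in G}\bar{R}_{n}$ is again a $G$-graded ring, now reduced, with $\bar{R}_{0}=R_{0}/N_{0}$ where $N_{0}=N\cap R_{0}$ is a nil ideal of $R_{0}$.

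Next I would write $e=\sum_{n\in G}e_{n}$ with $e_{n}\in R_{n}$ and fix a minimal prime $\mathfrak{p}$ of $R$. By Corollary \ref{Coro 14-ondort}, $\mathfrak{p}$ is graded, so $R/\mathfrak{p}$ is a $G$-graded integral domain. The image of $e$ is an idempotent of a domain, hence $0$ or $1$; in either case it is homogeneous of degree $0$ (note $1\in R_{0}$ by Lemma \ref{Lemma 5-five-bes}). Comparing homogeneous components gives $e_{n}\in\mathfrak{p}$ for every $n\neq 0$. Letting $\mathfrak{p}$ range over all minimal primes and using $\bigcap_{\mathfrak{p}}\mathfrak{p}=N$, I conclude $e_{n}\in N$ for all $n\neq 0$. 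Thus $e-e_{0}=\sum_{n\neq 0}e_{n}$ is nilpotent, i.e. $\bar{e}=\overline{e_{0}}\in\bar{R}_{0}$.

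The final and most delicate step is to upgrade ``$\bar{e}$ has degree $0$'' to ``$e$ has degree $0$''. One might hope $e_{0}$ is itself idempotent, but extracting the degree-$0$ part of $e^{2}=e$ only yields $e_{0}^{2}-e_{0}=-\sum_{n\neq 0}e_{n}e_{-n}\in N$, so $e_{0}$ is idempotent merely modulo $N$. To finish I would invoke uniqueness of idempotent lifting modulo a nil ideal: if two idempotents of a commutative ring differ by a nilpotent $d$, then a direct expansion gives $d^{3}=d$, and nilpotence forces $d=0$. Since $N_{0}$ is a nil ideal of $R_{0}$, the idempotent $\bar{e}=\overline{e_{0}}\in R_{0}/N_{0}$ lifts to some idempotent $e'\in R_{0}$; then $e$ and $e'$ are idempotents of $R$ with the same image in $\bar{R}$, whence $e=e'\in R_{0}$.

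I expect the main obstacle to be exactly this last transport step. Passing to $\bar{R}$ kills all the information in the higher components $e_{n}$, and because the surviving component $e_{0}$ is not idempotent on the nose, one cannot simply declare $e=e_{0}$; the nil-ideal lifting lemma (rather than any degree-by-degree computation, which is what fails in the $\mathbb{N}$-graded approach) is precisely what bridges the gap and is the heart of the argument.
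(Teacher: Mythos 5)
Your proof is correct and follows essentially the same route as the paper's: both use Corollary \ref{Coro 14-ondort} (graded minimal primes) to show the components $e_{n}$, $n\neq 0$, are nilpotent, then lift the idempotent $e_{0}+\mathfrak{N}_{0}$ to a genuine idempotent of $R_{0}$, and conclude by rigidity of idempotents modulo nilpotents. The only cosmetic difference is that you verify this rigidity directly via the identity $d^{3}=d$, whereas the paper cites an external lemma (two idempotents differing by an element of the Jacobson radical coincide) and applies it through the auxiliary idempotent $ef$.
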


\begin{proof} Let $f=\sum\limits_{n\in G}r_{n}$ be an idempotent of $R$ with $r_{n}\in R_{n}$ for all $n$. If $\mathfrak{p}$ is a minimal prime ideal of $R$, then $f+\mathfrak{p}$ is an idempotent of the integral domain $R/\mathfrak{p}$. Thus $f\in\mathfrak{p}$ or $1-f\in\mathfrak{p}$.
Since $G$ is a totally ordered group, 
thus by Corollary \ref{Coro 14-ondort},
every minimal prime ideal of $R$ is a graded ideal. Hence, $r_{n}\in\mathfrak{p}$ and so $r_{n}$ is nilpotent for all $n\neq0$. 
It follows that $f+\mathfrak{N}=r_{0}+\mathfrak{N}$ where $\mathfrak{N}$ is the nilradical of $R$. This shows that $r_{0}+\mathfrak{N}_{0}$ is an idempotent of the ring $R_{0}/\mathfrak{N}_{0}$ where $\mathfrak{N}_{0}=\mathfrak{N}\cap R_{0}$ is the nilradical of $R_{0}$.
But it is well known that the idempotents of any ring can be lifted modulo its nilradical (see e.g. \cite{Tarizadeh-Sharma}). So there exists an idempotent $e\in R_{0}$ such that $a:=r_{0}-e$ is nilpotent. It follows that $(1-e)r_{0}=(1-e)a$ and $e(1-r_{0})=-ea$ are nilpotent. Also $(1-e)r_{n}$ and $er_{n}$ are nilpotent for all $n\neq0$.
This shows that $(1-e)f$ and $e(1-f)$ are nilpotent elements of $R$.  
But it can be seen that if $e,e'$ are idempotents of a ring such that $e-e'$ is a member of the Jacobson radical, then $e=e'$ (see \cite[Lemma 3.8]{Tarizadeh-Sharma}). Hence, $f=ef=e\in R_{0}$.
\end{proof}

The following corollaries are immediate consequences of Theorem \ref{Lemma T-K}. 

\begin{corollary} Every idempotent element of a $\mathbb{Z}$-graded ring $R=\bigoplus\limits_{n\in\mathbb{Z}}R_{n}$ is contained in the base subring $R_{0}$.
\end{corollary}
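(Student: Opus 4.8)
The plan is to derive this statement as a direct specialization of Theorem \ref{Lemma T-K}, since it is explicitly advertised as an immediate consequence. The only thing that needs checking is that the grading group $\mathbb{Z}$ satisfies the hypothesis of that theorem, namely that it is a torsion-free Abelian group. First I would observe that $\mathbb{Z}$ is abelian under addition. Then I would verify torsion-freeness: for any nonzero $n \in \mathbb{Z}$ and any integer $k \geqslant 1$ we have $kn \neq 0$, so $\mathbb{Z}$ has no nonzero element of finite order, i.e.\ $T(\mathbb{Z}) = 0$ in the sense recalled in Section~\ref{Sec:Prelim}. Thus $\mathbb{Z}$ is a torsion-free Abelian group.

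With this verification in hand, the result follows instantly: given a $\mathbb{Z}$-graded ring $R = \bigoplus_{n \in \mathbb{Z}} R_n$, we simply invoke Theorem \ref{Lemma T-K} with $G = \mathbb{Z}$ to conclude that every idempotent of $R$ lies in the base subring $R_0$. There is no real obstacle to overcome here, and no separate argument is required; the entire content of the corollary is the observation that $\mathbb{Z}$ is a legitimate instance of the torsion-free Abelian groups covered by the theorem. If anything, the only point worth making explicit is the torsion-freeness check, which is why I would state it briefly rather than leave it implicit.
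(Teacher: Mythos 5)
Your proposal is correct and matches the paper's own treatment exactly: the paper presents this corollary as an immediate consequence of Theorem \ref{Lemma T-K}, which is precisely your specialization to $G=\mathbb{Z}$. Making the (trivial) torsion-freeness check explicit is fine but adds nothing beyond what the paper leaves implicit.
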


For any ring $R$ and a monoid $M$, it can be seen that the localization of $R[M]$ with respect to the multiplicative set $S=\{\epsilon_{m}: m\in M\}$ is canonically isomorphic to the group-ring $R[G]$ as $G$-graded rings where $G$ is the Grothendieck group of $M$. In particular, the localization of the polynomial ring $R[x]$ with respect to the multiplicative set $S=\{1,x,x^{2},\cdots\}$ is denoted by $R[x,x^{-1}]$ (it is called the ring of Laurent polynomials over $R$) and we have the canonical isomorphism of $\mathbb{Z}$-graded rings $R[\mathbb{Z}]\simeq R[x,x^{-1}]$. 

\begin{corollary} For any ring $R$, every idempotent element of the ring of Laurent polynomials $R[x,x^{-1}]$ is contained in $R$.
\end{corollary}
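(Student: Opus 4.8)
The plan is to let the grading machinery that has already been built do all of the work, so that the corollary becomes a pure translation of Theorem~\ref{Lemma T-K}. First I would recall, exactly as the text does just above the statement, the canonical isomorphism of $\mathbb{Z}$-graded rings $R[\mathbb{Z}]\simeq R[x,x^{-1}]$, under which the variable $x$ corresponds to $\epsilon_{1}$. This presents $R[x,x^{-1}]$ as a $\mathbb{Z}$-graded ring whose degree-$n$ homogeneous component is $Rx^{n}=R\epsilon_{n}$; in particular the base subring (the degree-zero component) is $R\epsilon_{0}$, the subring of constant Laurent polynomials, which is canonically $R$ itself.

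Next I would simply apply Theorem~\ref{Lemma T-K} in the case $G=\mathbb{Z}$ (equivalently, the immediately preceding corollary on $\mathbb{Z}$-graded rings), which is legitimate since $\mathbb{Z}$ is a torsion-free Abelian group. That result says every idempotent of a $\mathbb{Z}$-graded ring lies in its base subring $R_{0}$. Applied to the grading described above, it tells us that every idempotent of $R[x,x^{-1}]$ is contained in the degree-zero component, and by the identification of that component with $R$ we conclude that every idempotent lies in $R$, as claimed.

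There is essentially no obstacle here: all the substance — the homogeneity of minimal primes forced by the torsion-free grading, the lifting of idempotents modulo the nilradical, and the Jacobson-radical rigidity of idempotents — was already discharged in the proof of Theorem~\ref{Lemma T-K}. The only genuinely new thing to verify is the bookkeeping that the base subring of this particular $\mathbb{Z}$-grading on $R[x,x^{-1}]$ is precisely $R$, which is immediate from the canonical isomorphism $R[\mathbb{Z}]\simeq R[x,x^{-1}]$ recalled in the preceding paragraph of the text. (I would also note that, as throughout this section, $R$ is understood to be commutative, since that is where Theorem~\ref{Lemma T-K} lives.)
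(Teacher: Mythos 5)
Your proposal is correct and is exactly the paper's intended argument: the paper likewise presents $R[x,x^{-1}]\simeq R[\mathbb{Z}]$ as a $\mathbb{Z}$-graded ring with base subring $R$ and declares the corollary an immediate consequence of Theorem~\ref{Lemma T-K}, which applies since $\mathbb{Z}$ is a torsion-free Abelian group. Your closing remark about commutativity is also apt, since the proof of Theorem~\ref{Lemma T-K} (minimal primes, lifting idempotents modulo the nilradical) is carried out in the commutative setting, as the paper's abstract acknowledges.
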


Recall from  \cite[\S4]{Tarizadeh-Taheri} that an ideal of a ring is called a \emph{regular ideal} if it is generated by a set of idempotent elements.

\begin{corollary} If $G$ is a torsion-free Abelian group, then the following assertions hold: \\
$\mathbf{(i)}$ Every regular ideal of a $G$-graded ring is a graded ideal. \\
$\mathbf{(ii)}$ Let $R$ be a $G$-graded ring. Then $\Spec(R)$ is connected if and only if $\Spec(R_{0})$ is connected.
\end{corollary}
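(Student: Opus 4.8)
The plan is to deduce both statements directly from Theorem \ref{Lemma T-K}, which guarantees that every idempotent of such a ring $R$ lies in the base subring $R_{0}$; in particular every idempotent is homogeneous of degree $0$. Neither part requires any new argument beyond this, so the work is essentially bookkeeping.

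For part $\mathbf{(i)}$, I would argue as follows. By definition a regular ideal $I$ is generated by a set $E$ of idempotent elements of $R$. By Theorem \ref{Lemma T-K} each $e\in E$ lies in $R_{0}$, so each generator is homogeneous (of degree $0$). Since an ideal generated by homogeneous elements is graded, $I$ is a graded ideal. This step is immediate once Theorem \ref{Lemma T-K} is invoked.

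For part $\mathbf{(ii)}$, the key tool is the standard dictionary: for a commutative ring $A$, the space $\Spec(A)$ is connected if and only if $A$ has no idempotents other than $0$ and $1$. First I would record that $1\in R_{0}$ by Lemma \ref{Lemma 5-five-bes} (as $G$ is a group, it is cancellative), so that $R_{0}$ is a subring of $R$ sharing the same identity. Next I would show that $R$ and $R_{0}$ have \emph{exactly the same} set of idempotents: every idempotent of $R_{0}$ is trivially an idempotent of $R$, while conversely every idempotent of $R$ lies in $R_{0}$ by Theorem \ref{Lemma T-K} and is therefore an idempotent of $R_{0}$. Consequently $R$ possesses a nontrivial idempotent if and only if $R_{0}$ does, and applying the dictionary to both rings yields that $\Spec(R)$ is connected if and only if $\Spec(R_{0})$ is connected.

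There is no genuine obstacle here; both parts are formal consequences of Theorem \ref{Lemma T-K}. The only points demanding a little care are the correct invocation of the idempotent--connectedness correspondence in part $\mathbf{(ii)}$ and the observation that the identity of $R$ already lies in $R_{0}$, which makes the passage between the idempotents of $R$ and those of $R_{0}$ unproblematic.
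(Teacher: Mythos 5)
Your proof is correct and is exactly the argument the paper intends: the corollary is stated there as an immediate consequence of Theorem \ref{Lemma T-K}, and your derivation (generators of a regular ideal are homogeneous of degree zero, and the idempotents of $R$ and $R_{0}$ coincide, combined with the idempotent--connectedness dictionary) fills in precisely those details. No gaps; the invocation of Lemma \ref{Lemma 5-five-bes} to place $1$ in $R_{0}$ is the right care point.
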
 

In addition to the above important consequences, Theorem \ref{Lemma T-K} also leads us to derive the following general case:

\begin{theorem}\label{Theorem i-T} Every idempotent of a $G$-graded ring $R=\bigoplus\limits_{n\in G}R_{n}$ with $G$ an Abelian group is contained in the subring $\bigoplus\limits_{h\in H}R_{h}$ where $H$ is the torsion subgroup of $G$. 
\end{theorem}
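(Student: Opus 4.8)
The plan is to reduce the statement to Theorem~\ref{Lemma T-K} by a single change of grading, exploiting the fact that while the torsion subgroup $H$ of an arbitrary Abelian group $G$ need not split off as a direct summand, the quotient $G/H$ is always torsion-free. Indeed, if $\overline{x}\in G/H$ satisfies $n\overline{x}=0$ for some $n\geqslant1$, then $nx\in H$, so $m(nx)=0$ for some $m\geqslant1$, whence $x\in H$ and $\overline{x}=0$; thus $G/H$ is a torsion-free Abelian group, to which Theorem~\ref{Lemma T-K} applies.

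First I would regrade $R$ along the canonical quotient homomorphism $q\colon G\to G/H$. Following the change-of-grading recipe from Section~\ref{Sec:Prelim}, set $S_{\overline{d}}=\bigoplus_{q(n)=\overline{d}}R_{n}$ for each $\overline{d}\in G/H$. Since the fibres of $q$ partition $G$, this yields a direct-sum decomposition $R=\bigoplus_{\overline{d}\in G/H}S_{\overline{d}}$, and from $R_{m}R_{n}\subseteq R_{m+n}$ together with $q(m+n)=q(m)+q(n)$ one checks $S_{\overline{d}}\,S_{\overline{e}}\subseteq S_{\overline{d}+\overline{e}}$; as $1\in R_{0}\subseteq S_{\overline{0}}$, the ring $R$ becomes a $(G/H)$-graded ring. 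Crucially, the underlying ring is unchanged, so an idempotent $f$ of $R$ is still an idempotent of this $(G/H)$-graded ring.

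Now I would apply Theorem~\ref{Lemma T-K} to the torsion-free Abelian group $G/H$: every idempotent of the $(G/H)$-graded ring $R$ lies in its base subring $S_{\overline{0}}$. Finally I would identify $S_{\overline{0}}=\bigoplus_{q(n)=\overline{0}}R_{n}=\bigoplus_{n\in\ker q}R_{n}=\bigoplus_{h\in H}R_{h}$, since $\ker q=H$. This places $f$ in $\bigoplus_{h\in H}R_{h}$, as required.

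The only genuine subtlety, and the step I would check most carefully, is that Theorem~\ref{Lemma T-K} is available for the possibly non-finitely-generated group $G/H$; but its proof rests only on the total orderability of torsion-free Abelian groups and on the gradedness of minimal primes (Corollary~\ref{Coro 14-ondort}), neither of which requires finite generation, so no hypothesis is lost. An alternative route, closer to the finitely generated structure theory, would first restrict to the finitely generated subgroup $G'$ generated by $\Supp(f)$ — so that $R':=\bigoplus_{n\in G'}R_{n}$ is a graded subring containing $f$ — then split $G'=H'\oplus F$ with $H'=H\cap G'$ torsion and $F$ free, and regrade along the projection $G'\to F$ to invoke Theorem~\ref{Lemma T-K}; the quotient argument above makes this reduction unnecessary.
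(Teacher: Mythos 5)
Your proof is correct, and it takes a genuinely different---and in fact more economical---route than the paper's. The paper first reduces to finitely generated $G$ (replacing $G$ by the subgroup generated by $\Supp(f)$, which contains the idempotent $f$ in the corresponding graded subring), then invokes the fundamental theorem of finitely generated Abelian groups to write $G=H\oplus F$ with $F$ torsion-free, and finally regrades along the projection onto $F$ before applying Theorem~\ref{Lemma T-K}. Your argument shows that the splitting step, and hence the entire finite-generation reduction, is unnecessary: the quotient $G/H$ is always torsion-free even when $H$ fails to be a direct summand of $G$, so one may regrade along $q\colon G\to G/H$ directly; the base subring of the $(G/H)$-grading is exactly $\bigoplus\limits_{h\in H}R_{h}$ since $\ker q=H$. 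You were also right to flag the one point that needs checking, namely that Theorem~\ref{Lemma T-K} applies to arbitrary (not necessarily finitely generated) torsion-free Abelian groups; it does, since its proof rests only on the total orderability of such groups and on Corollary~\ref{Coro 14-ondort}, neither of which involves finite generation. What the paper's route buys is that it works entirely with subgroups of $G$ and the standard structure theory; what yours buys is a shorter proof with fewer ingredients---indeed, the ``alternative route'' you sketch at the end is precisely the paper's proof, so you have correctly placed both arguments relative to one another.
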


\begin{proof} Let $f$ be an idempotent of $R$. Let $K$ be the subgroup of $G$ generated by the finite subset $\Supp(f)$. Then $f$ is an idempotent element of the subring $\bigoplus\limits_{x\in K}R_{x}$, because  $\Supp(f)\subseteq K$. Hence, without loss of generality, we may assume that $G$ is a finitely generated group. Then by the fundamental theorem of finitely generated Abelian groups, we may write $G=H\oplus F$ where $F$ is a (finitely generated) torsion-free subgroup of $G$. Then by changing the grading, $R=\bigoplus\limits_{x\in F}S_{x}$ can be viewed as an $F$-graded ring with the homogeneous components $S_{x}=\sum\limits_{\substack{n\in G,\\n-x\in H}}R_{n}$. Then by Theorem \ref{Lemma T-K}, we have $f\in S_{0}=\sum\limits_{n\in H}R_{n}$. 
\end{proof} 

\begin{lemma}\label{Lemma 1 bir-one} Let  $R=\bigoplus\limits_{m\in M}R_{m}$ be an $M$-graded ring with $M$ a monoid. Then $\bigoplus\limits_{m\in M^{0}}R_{m}$ is a subring of $R$ where we call $M^{0}=\{x\in M:\exists y\in M, x+y=y\}$ the quasi-zero submonoid of $M$.
\end{lemma}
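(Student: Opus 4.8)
The plan is to avoid any direct analysis of $\Supp(1)$ and instead reduce the whole statement to the cancellative case already settled in Lemma~\ref{Lemma 5-five-bes}, by changing the grading along the universal cancellative quotient of $M$. (Throughout I use that $M$ is commutative, which is what the additive notation and the surrounding results presuppose; commutativity is needed both to form the congruence below and to make $M^{0}$ a well-behaved fibre.)

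First I would put on $M$ the relation $a\sim b$ iff there is some $c\in M$ with $a+c=b+c$. One checks routinely that $\sim$ is a monoid congruence: reflexivity and symmetry are immediate, transitivity follows by adding the two witnessing elements, and compatibility with $+$ is clear. Hence $\bar{M}:=M/\!\sim$ is a commutative monoid and the quotient map $\phi\colon M\to\bar{M}$, $a\mapsto\bar{a}$, is a morphism of monoids. The two facts I need are: (i) $\bar{M}$ is cancellative, and (ii) $\phi^{-1}(\bar{0})=M^{0}$. For (i), if $\bar{a}+\bar{x}=\bar{b}+\bar{x}$ then $a+x+c=b+x+c$ for some $c$, so $a+(x+c)=b+(x+c)$ and thus $\bar{a}=\bar{b}$. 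For (ii), $\bar{a}=\bar{0}$ means $a\sim 0$, i.e.\ $a+c=c$ for some $c\in M$, which is exactly the defining condition for $a\in M^{0}$.

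Next I would change the grading along $\phi$, as recalled in Section~\ref{Sec:Prelim}: $R=\bigoplus_{d\in\bar{M}}S_{d}$ becomes a $\bar{M}$-graded ring with $S_{d}=\sum_{\phi(m)=d}R_{m}$. By (ii) the identity component is $S_{\bar{0}}=\bigoplus_{m\in M^{0}}R_{m}$, an internal direct sum since the $R_{m}$ are independent. Finally, because $\bar{M}$ is cancellative by (i), Lemma~\ref{Lemma 5-five-bes} applies to the $\bar{M}$-grading and yields $1\in S_{\bar{0}}$; together with the fact that $S_{\bar{0}}$ is an additive subgroup closed under multiplication (as $\bar{0}+\bar{0}=\bar{0}$, whence $S_{\bar{0}}S_{\bar{0}}\subseteq S_{\bar{0}}$), this shows that $S_{\bar{0}}=\bigoplus_{m\in M^{0}}R_{m}$ is a subring of $R$, which is the assertion.

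The only substantive step is the construction of $\bar{M}$ and the identification (ii); everything else is bookkeeping, and no ad hoc manipulation of the homogeneous components of $1$ is required. I expect the main obstacle to be precisely the conceptual point of recognizing that the quasi-zero submonoid $M^{0}$ is exactly the fibre over $\bar{0}$ of the universal cancellative quotient. Once that is in place, the cancellative case (Lemma~\ref{Lemma 5-five-bes}) does all the work, and in particular one need not prove separately that $\Supp(1)\subseteq M^{0}$ — it drops out as $1\in S_{\bar{0}}$.
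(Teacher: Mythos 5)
Your proof is correct and is essentially the paper's own argument: the paper changes the grading along the canonical map $f\colon M\to G$ into the Grothendieck group, notes that the image $M'=\{[m,0]:m\in M\}$ is cancellative (being a submonoid of a group) and that $M^{0}=\Ker(f)=f^{-1}(0)$, and then applies Lemma~\ref{Lemma 5-five-bes} — exactly your reduction, since your quotient $\bar{M}=M/\!\sim$ is canonically isomorphic to that image $M'$. The only cosmetic difference is that you construct the universal cancellative quotient directly and verify cancellativity by hand, whereas the paper gets cancellativity for free from the embedding into the group $G$.
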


\begin{proof} Let $G=\{[a,b]: a,b\in M\}$ be the Grothendieck group of $M$ and let $M'=\{[m,0]: m\in M\}$ be the image of the canonical map $f:M\rightarrow G$. Thus $M'$ is a submonoid of the group $G$, and hence $M'$ is cancellative. Then by changing the grading, we can view $R=\bigoplus\limits_{x\in M'}S_{x}$ as an $M'$-graded ring with the homogeneous components $S_{x}=\sum\limits_{[m,0]=x}R_{m}$. But $M^{0}=\Ker(f)=f^{-1}(0)$. Thus by Lemma \ref{Lemma 5-five-bes}, $S_{0}=\bigoplus\limits_{m\in M^{0}}R_{m}$ is a subring of $R$. In particular, $1\in\bigoplus\limits_{m\in M^{0}}R_{m}$. 
\end{proof}

If $M$ is a commutative monoid, then it can be easily seen that the set $\widetilde{M}=\{x\in M:\exists n\geqslant1, \exists y\in M, nx+y=y\}$ is a submonoid of $M$. We call $\widetilde{M}$ the quasi-torsion submonoid of $M$. Note that the torsion submonoid $T(M)=\{x\in M:\exists n\geqslant1, nx=0\}$ is contained in $\widetilde{M}$ and equality holds if $M$ has the cancellation property. 
 
\begin{theorem}\label{Theorem 3-III} Every idempotent of an $M$-graded ring $R=\bigoplus\limits_{m\in M}R_{m}$ with $M$ a monoid is contained in the subring $\bigoplus\limits_{m \in \widetilde{M}}R_{m}$ where $\widetilde{M}$ is the quasi-torsion submonoid of $M$.
\end{theorem}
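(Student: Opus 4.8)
The plan is to reduce to the group-graded case already handled in Theorem~\ref{Theorem i-T} by passing to the Grothendieck group of $M$. Let $f$ be an idempotent of $R$, let $G$ be the Grothendieck group of $M$, and let $\phi:M\rightarrow G$, $m\mapsto[m,0]$, be the canonical morphism of monoids. Exactly as in the proof of Lemma~\ref{Lemma 1 bir-one}, I would change the grading along $\phi$ and view $R=\bigoplus_{g\in G}S_{g}$ as a $G$-graded ring with homogeneous components $S_{g}=\sum_{\phi(m)=g}R_{m}$; since the fibres $\phi^{-1}(g)$ partition $M$, this is in fact an internal direct sum $S_{g}=\bigoplus_{\phi(m)=g}R_{m}$ inside $R$.

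Next, since $G$ is an Abelian group, Theorem~\ref{Theorem i-T} applies to this new $G$-grading and yields that $f$ lies in the subring $\bigoplus_{h\in H}S_{h}$, where $H$ is the torsion subgroup of $G$. It then only remains to identify this subring with $\bigoplus_{m\in\widetilde{M}}R_{m}$, and this is the sole point of genuine content in the argument.

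The key step is the computation $\phi^{-1}(H)=\widetilde{M}$. For $m\in M$ we have $\phi(m)\in H$ if and only if $n\phi(m)=0$ for some $n\geqslant1$, i.e. $[nm,0]=[0,0]$ in $G$; unwinding the defining equivalence of the Grothendieck group, this holds precisely when there is some $y\in M$ with $nm+y=y$, which is exactly the condition defining $\widetilde{M}$. Granting this, the fibrewise decomposition gives
\[
\bigoplus_{h\in H}S_{h}=\bigoplus_{h\in H}\Big(\bigoplus_{\phi(m)=h}R_{m}\Big)=\bigoplus_{m\in\phi^{-1}(H)}R_{m}=\bigoplus_{m\in\widetilde{M}}R_{m},
\]
whence $f\in\bigoplus_{m\in\widetilde{M}}R_{m}$, as required.

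I expect essentially no obstacle beyond this bookkeeping: the change of grading was already set up in the preliminaries and used in Lemma~\ref{Lemma 1 bir-one} for the zero-fibre $M^{0}=\Ker(\phi)$, and here I merely replace the kernel by the preimage of the torsion subgroup. The one point that must be verified with care is that $H$ really pulls back to $\widetilde{M}$ and not to the (possibly smaller) torsion submonoid $T(M)$; these two coincide only when $M$ is cancellative, and it is precisely the Grothendieck relation $nm+y=y$ (rather than $nm=0$) that accounts for the difference and makes $\widetilde{M}$ the correct object.
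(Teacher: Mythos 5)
Your proposal is correct and follows essentially the same route as the paper: change the grading along the canonical map $\phi:M\rightarrow G$ to the Grothendieck group, apply Theorem~\ref{Theorem i-T}, and identify $\bigoplus_{h\in H}S_{h}$ with $\bigoplus_{m\in\widetilde{M}}R_{m}$ via $\phi^{-1}(H)=\widetilde{M}$. In fact you spell out the one point the paper merely asserts, namely the unwinding of $[nm,0]=[0,0]$ into the condition $nm+y=y$ that shows the torsion subgroup pulls back to the quasi-torsion submonoid rather than to $T(M)$.
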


\begin{proof} First note that by Lemma \ref{Lemma 1 bir-one}, if $L$ is a submonoid of $M$ containing $M^{0}$, then  $\bigoplus\limits_{m\in L}R_{m}$ is a subring of an $M$-graded ring $R=\bigoplus\limits_{m\in M}R_{m}$. In particular, since $M^{0}\subseteq M^{\ast}$, thus $\bigoplus\limits_{m \in M^{\ast}}R_{m}$ is a subring of $R$. Then let $G=\{[a,b]: a,b\in M\}$ be the Grothendieck group of $M$. Consider the canonical morphism of monoids $f:M\rightarrow G$ which is given by $m\mapsto[m,0]$. By changing the grading,   
we can make $R=\bigoplus\limits_{x\in G}S_{x}$ into a $G$-graded with the homogeneous components $S_{x}=\sum\limits_{[m,0]=x}R_{m}$ (note that $S_{x}=0$ for all $x\in G\setminus\Ima f$). Then by Theorem \ref{Theorem i-T}, every idempotent of $R$ is contained in the subring $\bigoplus\limits_{x\in H}S_{x}$ where $H$ is the torsion subgroup of $G$. But $M^{\ast}=f^{-1}(H)$. Hence,  $\bigoplus\limits_{x\in H}S_{x}=\bigoplus\limits_{m \in M^{\ast}}R_{m}$.
\end{proof}

The following result generalizes Bass' theorem (see \cite[Lemma 6.7]{Bass} or \cite[Corollary 4]{Kanwar-Leroy}). 

\begin{corollary}\label{Coro 6-six} For any ring $R$ and a monoid $M$, then every idempotent of the monoid-ring $R[M]$ is contained in the subring $R[\widetilde{M}]$ where $\widetilde{M}$ is the quasi-torsion submonoid of $M$. 
\end{corollary}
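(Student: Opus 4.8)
The plan is to recognize this as the immediate specialization of Theorem \ref{Theorem 3-III} to the case of a monoid-ring, so the entire argument is just a matter of identifying the objects correctly. First I would recall, as established in the Remark following Theorem \ref{th T-O}, that the monoid-ring $R[M]=\bigoplus\limits_{m\in M}R\epsilon_{m}$ is an $M$-graded ring whose homogeneous component of degree $m$ is the additive group $R\epsilon_{m}=\{r\epsilon_{m}: r\in R\}$. Thus $R[M]$ fits the hypotheses of Theorem \ref{Theorem 3-III} with $R_{m}=R\epsilon_{m}$.

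Next I would apply Theorem \ref{Theorem 3-III} directly: every idempotent of $R[M]$ lies in the subring $\bigoplus\limits_{m\in\widetilde{M}}R\epsilon_{m}$, where $\widetilde{M}=\{x\in M:\exists n\geqslant1,\exists y\in M,\ nx+y=y\}$ is the quasi-torsion submonoid of $M$. The only remaining point is to verify the identification $\bigoplus\limits_{m\in\widetilde{M}}R\epsilon_{m}=R[\widetilde{M}]$. This is the natural identification of the subring generated by the homogeneous components indexed by $\widetilde{M}$ with the monoid-ring over the submonoid $\widetilde{M}$, valid because $\widetilde{M}$ is indeed a submonoid of $M$ (as noted in the discussion preceding Theorem \ref{Theorem 3-III}), so that $R[\widetilde{M}]=\bigoplus\limits_{m\in\widetilde{M}}R\epsilon_{m}$ sits inside $R[M]$ as a subring.

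Since there is no hard analytic or combinatorial content to isolate, I do not expect any genuine obstacle here. If anything, the one point requiring a moment's care is the bookkeeping that $\widetilde{M}$ really is closed under the monoid operation (so that $R[\widetilde{M}]$ makes sense as a ring), but this is exactly the fact recorded just before Theorem \ref{Theorem 3-III}. Everything else is a transcription of the general graded-ring statement into the monoid-ring notation.
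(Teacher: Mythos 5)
Your proof is correct and is essentially identical to the paper's own: both view $R[M]$ as an $M$-graded ring with homogeneous components $R\epsilon_{m}$, apply Theorem \ref{Theorem 3-III}, and identify $\bigoplus\limits_{m\in\widetilde{M}}R\epsilon_{m}$ with $R[\widetilde{M}]$. Your extra remark that $\widetilde{M}$ is a submonoid (so that $R[\widetilde{M}]$ is a well-defined subring) is a fine point of bookkeeping that the paper leaves implicit.
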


\begin{proof} We know that the monoid-ring $R[M]=\bigoplus\limits_{m\in M}S_{m}$ is an $M$-graded ring with homogeneous components $S_{m}=R\epsilon_{m}$. Then by Theorem \ref{Theorem 3-III}, every idempotent of $R[M]$ is contained in 
the subring $\bigoplus\limits_{m\in \widetilde{M}}S_{m}=R[\widetilde{M}]$.
\end{proof}

\begin{corollary}\label{Lemma B-T} If $M$ is a torsion-free monoid with the cancellation property, then every idempotent of the monoid-ring $R[M]$ is contained in $R$. 
\end{corollary}

\begin{proof} By hypothesis, $\widetilde{M}=0$ and so $R[\widetilde{M}]=R$. Hence, the assertion follows from the above corollary.
\end{proof}

\section{Homogeneity of the Jacobson radical and colon ideal}

In the following result we generalize Bergman's theorem to the more general setting of $G$-graded commutative rings, and give a natural and quite elementary proof of it.

\begin{theorem}\label{Theorem II}
The Jacobson radical of every $G$-graded ring with $G$ a torsion-free Abelian group is a graded ideal.
\end{theorem}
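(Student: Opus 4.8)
The plan is to deduce the gradedness of the Jacobson radical $J(R)$ from the homogeneity of units in graded domains (Lemma~\ref{Theorem V}), after relativizing to each maximal ideal by means of the graded prime $\mathfrak{m}^{\ast}$. Since $J(R)$ is an ideal, it is a graded ideal exactly when every homogeneous component of each $f\in J(R)$ again lies in $J(R)$; and because $J(R)=\bigcap_{\mathfrak{m}}\mathfrak{m}$ ranges over the maximal ideals of the (commutative) ring $R$, it suffices to fix an $f=\sum_{n}f_{n}\in J(R)$ and a maximal ideal $\mathfrak{m}$ and show that every $f_{n}$ lies in $\mathfrak{m}$.

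First I would pass to the graded domain attached to $\mathfrak{m}$. By Corollary~\ref{Coro 20 twenty yirmi} the ideal $\mathfrak{m}^{\ast}$ is a graded prime, so $T:=R/\mathfrak{m}^{\ast}$ is a $G$-graded integral domain with the induced grading $T_{n}=(R_{n}+\mathfrak{m}^{\ast})/\mathfrak{m}^{\ast}$. Since $f\in J(R)$ lies in \emph{every} maximal ideal of $R$, in particular in every maximal ideal containing $\mathfrak{m}^{\ast}$, its image $\bar f$ lies in $J(T)$.

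The heart of the matter is the following claim: in any $G$-graded integral domain $T$ with $G$ torsion-free Abelian, every element of $J(T)$ is homogeneous. To prove it, I would suppose $g=\sum_{n}g_{n}\in J(T)$ has $|\Supp(g)|\geqslant 2$ and seek a contradiction. Because $T$ is a domain, the set $D=\{d\in G:T_{d}\neq 0\}$ is a submonoid of $G$ containing the (at least two) degrees of $\Supp(g)$, hence containing a nonzero element; torsion-freeness of $G$ then forces $D$ to be infinite, whereas $\Supp(g)$ is finite. I may therefore choose $d\in D$ with $-d\notin\Supp(g)$ and fix a nonzero $r\in T_{d}$. As $g\in J(T)$, the element $1-rg$ is invertible, hence homogeneous by Lemma~\ref{Theorem V}. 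But $rg=\sum_{n\in\Supp(g)}rg_{n}$ has all of its (nonzero, since $T$ is a domain) homogeneous components in degrees $d+n\neq 0$, so $1-rg$ carries the nonzero component $1$ in degree $0$ together with components in nonzero degrees, contradicting homogeneity. Thus $g$ is homogeneous.

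Applying the claim to $\bar f\in J(T)$ shows $\bar f$ is homogeneous, so there is a single index $n_{0}$ with $f_{n}\in\mathfrak{m}^{\ast}\subseteq\mathfrak{m}$ for all $n\neq n_{0}$; then $f_{n_{0}}=f-\sum_{n\neq n_{0}}f_{n}\in\mathfrak{m}$ as well, since $f\in\mathfrak{m}$. Hence every $f_{n}\in\mathfrak{m}$, and as $\mathfrak{m}$ was arbitrary, $f_{n}\in J(R)$ for all $n$, which is what we wanted. I expect the homogeneity claim for graded domains to be the main obstacle: the whole argument hinges on producing a shift degree $d$ with $-d\notin\Supp(g)$, and this is precisely the point where the torsion-freeness of $G$ (ensuring the infinitude of $D$) is indispensable, just as it is in Lemma~\ref{Theorem V} and Corollary~\ref{Coro 20 twenty yirmi}.
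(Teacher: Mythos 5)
Your proof is correct, and although it rests on the same two pillars as the paper's argument --- Corollary \ref{Coro 20 twenty yirmi}, which makes $T=R/\mathfrak{m}^{\ast}$ a $G$-graded integral domain, and Lemma \ref{Theorem V} on homogeneity of units in such domains --- the key step is executed along a genuinely different route. The paper supposes $f\in\mathfrak{J}(R)\setminus\mathfrak{m}^{\ast}$, picks a homogeneous component $r_{d}\notin\mathfrak{m}^{\ast}$ with $d\neq 0$, and analyzes the unit $1+r_{d}f$: after degree bookkeeping it concludes $r_{n}\in\mathfrak{m}^{\ast}$ for all $n\neq d,-d$, and then needs a \emph{second} application of Lemma \ref{Theorem V}, to $1+r_{d}+r_{-d}$, to reach the contradiction; as a by-product it establishes the formula $\mathfrak{J}(R)=\bigcap_{\mathfrak{m}\in\Max(R)}\mathfrak{m}^{\ast}$. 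You instead isolate a clean standalone claim --- in a $G$-graded domain $T$, every element of $\mathfrak{J}(T)$ is homogeneous --- and prove it by a genericity argument: the degree support $D=\{d\in G: T_{d}\neq 0\}$ is a submonoid, hence infinite once it is nontrivial (this is where torsion-freeness enters for you), so you may choose a shift degree $d\in D$ with $-d\notin\Supp(g)$; the unit $1-rg$ then visibly has the nonzero component $1$ in degree $0$ alongside nonzero components in nonzero degrees, and a single application of Lemma \ref{Theorem V} finishes. Using an auxiliary homogeneous multiplier $r$, rather than a component of the element itself, is precisely what eliminates the paper's two-stage computation. Your intermediate claim is also reusable: allowing $|\Supp(g)|=1$ in the same argument shows $\mathfrak{J}(T)=0$ whenever the graded domain $T$ has a nonzero homogeneous element of nonzero degree, which is the domain case of the paper's later Theorem \ref{Lemma 3 NZD} on quasi-Jacobson rings (proved there via Theorem \ref{Theorem ix}). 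Both your proof and the paper's are commutative arguments, consistent with the scope announced in the introduction.
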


\begin{proof}
Let $R=\bigoplus\limits_{n\in G}R_{n}$ be a $G$-graded ring. It suffices to show that $\mathfrak{J}(R)=\bigcap
\limits_{\mathfrak{m}\in\Max(R)}\mathfrak{m}^{\ast}$. The inclusion $\bigcap\limits_{\mathfrak{m}\in\Max(R)}\mathfrak{m}^{\ast}
\subseteq\mathfrak{J}(R)$ is clear. To prove the reverse inclusion, take $f=\sum\limits_{n\in G}r_{n}\in\mathfrak{J}(R)$ with $r_{n}\in R_{n}$ for all $n$. Suppose there is a maximal ideal $\mathfrak{m}$ of $R$ such that $f\notin\mathfrak{m}^{\ast}$. Then there exists some $0\neq d\in G$ such that $r_{d}\notin\mathfrak{m}^{\ast}$, because if $r_{n}\in\mathfrak{m}^{\ast}$ for all $n\neq0$, then $r_{0}=f-\sum\limits_{n\neq0}r_{n}\in\mathfrak{m}$ and hence $r_{0}\in\mathfrak{m^{\ast}}$, yielding $f\in\mathfrak{m^{\ast}}$ which is a contradiction. Since $G$ is a totally ordered group, thus by Corollary \ref{Coro 20 twenty yirmi}, the graded ideal $\mathfrak{m}^{\ast}$ is a prime ideal of $R$. We know that $1+r_{d}f$ is invertible in $R$ and thus its image is invertible in the $G$-graded integral domain $R/\mathfrak{m}^{\ast}$. By Lemma \ref{Theorem V}, in a $G$-graded integral domain, every invertible element is homogeneous. Thus there exists some $k$ such that the homogeneous component $(1+r_{d}f)_{n}\in\mathfrak{m}^{\ast}$ for all $n\neq k$. We have $(1+r_{d}f)_{n}=r_{d}r_{n-d}+\delta_{0,n}$ where $\delta_{0,n}$ is the Kronecker delta. It follows that $k=2d$. If $n\neq d, -d$ then $n+d\neq 2d,0$ and thus $r_{d}r_{n}=(1+r_{d}f)_{n+d}\in\mathfrak{m}^{\ast}$. Therefore, $r_{n}\in\mathfrak{m}^{\ast}$ for all $n\neq d, -d$.
This yields $r_{d}+r_{-d}-f\in\mathfrak{m}^{\ast}$. Hence, $1+r_{d}+r_{-d}+\mathfrak{m}^{\ast}$ is invertible in $R/\mathfrak{m}^{\ast}$. Again by Lemma \ref{Theorem V}, we get that $r_{d}\in\mathfrak{m}^{\ast}$ which is a contradiction. This completes the proof. 
\end{proof}

\begin{corollary} If $M$ is a totally ordered monoid with the cancellation property, then the Jacobson radical of an $M$-graded ring is a graded ideal.
\end{corollary}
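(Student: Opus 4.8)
The plan is to deduce this from the group-graded case, Theorem~\ref{Theorem II}, by transporting the $M$-grading onto a grading by the Grothendieck group of $M$. This is the same reduction already used in Corollaries~\ref{Coro 3-uc-Kap}, \ref{Coro 6-altin} and \ref{Coro 17 onyedi}, so the argument should be short.

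First I would let $G$ be the Grothendieck group of $M$. Since $M$ is a totally ordered cancellative monoid, the Preliminaries (the order $[a,b]<[c,d]$ iff $a+d<b+c$ recorded just before Corollary~\ref{Coro onalti 16}, or equivalently the criterion of Northcott quoted there) show that $G$ is a totally ordered group, hence in particular a torsion-free Abelian group. Next I would change the grading along the canonical morphism $f\colon M\rightarrow G$, $m\mapsto[m,0]$. Because $M$ is cancellative, $f$ is injective, so by the change-of-grading recipe the components of the new $G$-grading are $S_{[m,0]}=R_{m}$ (with $S_{x}=0$ for $x\notin\Ima f$); that is, they coincide with the original $M$-homogeneous components. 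Consequently every element of $R$ has the same homogeneous decomposition for both gradings, and an ideal is graded for the $M$-grading if and only if it is graded for the $G$-grading.

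Finally I would apply Theorem~\ref{Theorem II} to $R$ regarded as a $G$-graded ring: as $G$ is torsion-free Abelian, $\mathfrak{J}(R)$ is a $G$-graded ideal, and by the identification of homogeneous components above it is therefore an $M$-graded ideal. Note that Theorem~\ref{Theorem II} imposes no commutativity on the ring, so no such hypothesis on $R$ is required. There is no genuine obstacle in this corollary; the only point where the hypotheses are truly used is the torsion-freeness of $G$, which is exactly what total order together with cancellation on $M$ secures, and which is precisely what makes Theorem~\ref{Theorem II} applicable.
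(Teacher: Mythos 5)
Your proof is correct and is essentially the paper's own argument: pass to the grading by the Grothendieck group $G$ of $M$, which is a torsion-free (equivalently, totally ordered) Abelian group, note that cancellativity makes $M\rightarrow G$ injective so the homogeneous components --- and hence gradedness of ideals --- are unchanged, and apply Theorem~\ref{Theorem II}. The only caution concerns your parenthetical claim that Theorem~\ref{Theorem II} imposes no commutativity: although the word is absent from its statement, the paper proves that theorem in the commutative setting (its proof runs through Corollary~\ref{Coro 20 twenty yirmi} and Lemma~\ref{Theorem V}, which concern prime ideals and graded integral domains in the commutative sense), so this corollary should be read as inheriting that implicit hypothesis rather than as a genuinely non-commutative statement.
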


\begin{proof} It follows from Theorem \ref{Theorem II} by passing to the $G$-grading where $G$ is the Grothendieck group of $M$ which is a torsion-free Abelian group. 
\end{proof}

Recall that if $I$ and $J$ are ideals of a ring $R$, then the ideal quotient (or, colon ideal) of $I$ by $J$ is the ideal $I:_{R}J=\{r\in R: rJ\subseteq I\}$ of $R$.

Let $(X,<)$ be a well-ordered set and let $P$ be a property of elements of $X$, i.e. $P(x)$ is a mathematical statement for all $x\in X$. Suppose that whenever $P(y)$ is true for all $y<x$, then $P(x)$ is also true. Then it is easy to see that $P(x)$ is true for all $x\in X$. This statement is called the transfinite induction. Every finite totally ordered set is well-ordered, and hence the transfinite induction can be applied for such sets. Using this weak version of the transfinite induction, we obtain the following general result: 

\begin{theorem}\label{Theorem vii}
Let $I$ be a graded radical ideal of a $G$-graded ring $R=\bigoplus\limits_{i\in G}R_{i}$ with $G$ a torsion-free Abelian group and $J$ be an arbitrary ideal of $R$. Then $I:_{R}J$ is a graded ideal.
\end{theorem}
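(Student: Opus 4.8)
The plan is to show that $I:_R J$ is generated by homogeneous elements, which is equivalent to showing that whenever $f = \sum_{n \in G} r_n \in I:_R J$ (with $r_n \in R_n$), each homogeneous component $r_n$ already lies in $I:_R J$. First I would fix an arbitrary element $g \in J$ and reduce the question to understanding the condition $fg \in I$; since $I$ is graded and radical, I want to extract homogeneity information from this single containment and then quantify over all $g \in J$ at the end. The key structural tool will be Corollary \ref{Coro 14-ondort} together with the graded radical hypothesis: because $G$ is torsion-free (hence totally ordered) and $I$ is a graded radical ideal, $I$ equals the intersection of the minimal primes above it, each of which is a graded prime ideal. Thus it suffices to prove the claim after passing to each quotient $R/\mathfrak{p}$ for $\mathfrak{p}$ a graded prime containing $I$; these quotients are $G$-graded integral domains, where the degree functions $n_*$ and $n^*$ are additive on products (as exploited in the proof of Lemma \ref{Theorem V}).

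Working in a fixed $G$-graded integral domain $\overline{R} = R/\mathfrak{p}$, the problem becomes: if $\bar f \bar g = 0$ in $\overline{R}$ then, because the domain has no zero-divisors, either $\bar f = 0$ or $\bar g = 0$, and in the former case every component $r_n$ maps to $0$ in $\overline{R}$. So the real content is combining the information over the finite family of minimal graded primes over $I$. Here is where I would invoke the transfinite (finite) induction flagged in the paragraph preceding the statement. Order the finite support $\Supp(f)$ by the total order on $G$ and process the components one at a time, using the additivity of $n_*$ on products in each graded domain quotient to control the lowest-degree term of $fg$: the bottom component of $fg$ in any such quotient is $r_{n_*(f)} \, g_{n_*(g)}$ (reading classes mod $\mathfrak{p}$), and this forces $r_{n_*(f)} g \in I$ after clearing the ambiguity over all the primes simultaneously.

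Concretely, for each graded prime $\mathfrak{p} \supseteq I$ I would let $s_{\mathfrak{p}}$ denote the smallest degree occurring in the image of $f$ in $R/\mathfrak{p}$ and analyze the smallest-degree component of the product $fg$ modulo $\mathfrak{p}$; isolating this term identifies a distinguished homogeneous component of $f$ whose product with $g$ lands in every $\mathfrak{p}$, hence in $I$. Subtracting off the corresponding homogeneous piece and re-running the argument on the element of strictly smaller support implements the induction on $|\Supp(f)|$, and the finiteness of the support guarantees termination. Once I have shown $r_n g \in I$ for each component $r_n$ and each fixed $g$, ranging over all $g \in J$ yields $r_n J \subseteq I$, i.e. $r_n \in I:_R J$, which is exactly homogeneity of the colon ideal.

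The main obstacle I anticipate is the bookkeeping needed to pass \emph{simultaneously} across the several minimal graded primes over $I$: the ``distinguished'' component of $f$ and the relevant bottom degree $s_{\mathfrak{p}}$ genuinely depend on $\mathfrak{p}$, so a single global choice of component need not kill $g$ in every quotient at once. The fix is to argue component-by-component through the total order on $G$ rather than choosing one extremal component globally, which is precisely why the (finite) transfinite induction on the well-ordered support set is the right framework, and why the author emphasized it just before the statement; carefully matching the inductive hypothesis to the product-degree inequalities $n_*(f)+n_*(g) \leqslant n_*(fg)$ is the delicate step.
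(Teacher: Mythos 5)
Your structural idea is genuinely different from the paper's and it does work, but you have badly overcomplicated the endgame, and the part you flag as ``the delicate step'' is exactly where your write-up stops being a proof. Once you know that every minimal prime $\mathfrak{p}$ over $I$ is graded (correct, via Corollary \ref{Coro 14-ondort} applied to the $G$-graded ring $R/I$, or via Corollary \ref{Coro 20 twenty yirmi} together with minimality, since $I\subseteq\mathfrak{p}$ and $I$ graded force $I\subseteq\mathfrak{p}^{\ast}$), the theorem follows in one line with no induction at all: fix $g\in J$; for each such $\mathfrak{p}$ we have $fg\in I\subseteq\mathfrak{p}$, hence $f\in\mathfrak{p}$ or $g\in\mathfrak{p}$; in the first case \emph{every} component $r_{n}$ lies in $\mathfrak{p}$ because $\mathfrak{p}$ is graded, and in the second case trivially $r_{n}g\in\mathfrak{p}$; so in either case $r_{n}g\in\mathfrak{p}$ for all $n$ simultaneously. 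Intersecting over the minimal primes and using that $I$ is radical gives $r_{n}g\in I$ for every $n$, and letting $g$ range over $J$ gives $r_{n}\in I:_{R}J$. The ``bookkeeping obstacle'' of your last paragraph is illusory: you never need one distinguished component that works for all primes at once, because the dichotomy is applied prime by prime and its conclusion is already uniform in $n$. Consequently the machinery of your second and third paragraphs --- the $\mathfrak{p}$-dependent bottom degrees $s_{\mathfrak{p}}$, the ``distinguished'' component, and the subtraction/induction on $|\Supp(f)|$ --- is unnecessary, and as written it is not an argument: one cannot ``subtract off the corresponding homogeneous piece'' when that piece varies with $\mathfrak{p}$, and you never explain how the per-prime data recombine. (Also, the set of minimal primes over $I$ need not be finite in a non-Noetherian ring, but nothing requires finiteness.)

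For comparison, the paper's proof takes a different and more hands-on route: it fixes an \emph{arbitrary} prime $\mathfrak{p}\supseteq I$ (not assumed graded and never replaced by a graded one), uses only that $I$ itself is graded --- so each homogeneous component $(fg)_{n}$ lies in $I\subseteq\mathfrak{p}$ --- and runs an induction on the finite totally ordered set of degree sums $\{i+k:(i,k)\in\Supp(f)\times\Supp(g)\}$, using primality of $\mathfrak{p}$ and the order on $G$ to cancel factors one degree at a time. Your approach, once streamlined as above, is shorter and more conceptual: it concentrates all the grading input into the single fact that minimal primes over a graded ideal are graded, whereas the paper trades that structural fact for a combinatorial induction. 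Both are legitimate; but as submitted, your proposal needs the repair indicated, since the induction you actually describe does not cohere.
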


\begin{proof} Take $f=\sum\limits_{i\in G}r_{i} \in I:_{R}J$ where $r_{i}\in R_{i}$ for all $i$. We must prove that each $r_{i}\in I:_{R}J$. Take $g=\sum\limits_{k\in G}r'_{k}\in J$ where $r'_{k}\in R_{k}$ for all $k$. It suffices to show that $r_{i}r'_{k}\in I$ for all $i,k\in G$. Since $I$ is a radical ideal, it will be enough to show that for each pair $(i,k)\in X=\Supp(f)\times\Supp(g)$ then $r_{i}r'_{k}\in\mathfrak{p}$ where $\mathfrak{p}$ is a prime ideal of $R$ containing $I$. We will prove the assertion by induction on the finite well-ordered set $S=\{i+k: (i,k)\in X\}=\{d+s,\ldots, m+l\}$
where $f=r_{d}+\dots+r_{m}$ and $g=r'_{s}+\dots+r'_{l}$ with $d=n_{\ast}(f)\leqslant m=n^{\ast}(f)$ and $s=n_{\ast}(g)\leqslant l=n^{\ast}(g)$.
We have $(fg)_{d+s}=\sum\limits_{i+k=d+s}r_{i}r'_{k}$.  
If $i+k=d+s$ and $(i,k)\neq(d,s)$, then since $G$ is a totally ordered group, we have $i<d$ or $k<s$ and so $r_{i}r'_{k}=0$. Thus $(fg)_{d+s}=r_{d}r'_{s}$.
But $r_{d}r'_{s}\in I\subseteq\mathfrak{p}$, because $I$ is a graded ideal and $fg \in I$.
We have thus established the base case of the induction ($n=d+s$).
Assume now $n>d+s$ with $n\in S$. By the induction hypothesis, if $i+k<n$ for some $(i,k)\in X$ then $r_i r'_k \in \mathfrak{p}$.
Seeking a contradiction, suppose that there exists $(a,b)\in X$ with $a+b=n$ for which $r_{a}r'_{b}\notin\mathfrak{p}$.
Using that $I$ is graded, we have $(fg)_{n}=r_{a}r'_{b}+\sum\limits_{\substack{i+k=n,\\
(i,k)\neq(a,b)}}r_{i}r'_{k}\in I \subseteq \mathfrak{p}$.
If $i+k=n$ and $(i,k)\neq(a,b)$, then $i<a$
or $a<i$.
If $i<a$, then $i+b<n$, because $G$ is a totally ordered group. Hence by the induction, $r_{i}r'_{b}\in\mathfrak{p}$ which yields $r_{i} \in \mathfrak{p}$ (since $r'_{b}\notin\mathfrak{p}$).
Similarly, if $a<i$, then $a+k<n$
and hence by the induction,
$r_{a}r'_{k}\in\mathfrak{p}$ which yields $r'_{k} \in \mathfrak{p}$ (since $r_{a}\notin\mathfrak{p}$).
Therefore for any such pair $(i,k)$ we have $r_{i}r'_{k}\in\mathfrak{p}$  and so
$\sum\limits_{\substack{i+k=n,\\
(i,k)\neq(a,b)}}r_{i}r'_{k}\in \mathfrak{p}$. It follows that $r_{a}r'_{b}\in\mathfrak{p}$
which is a contradiction. This complete the proof. 
\end{proof}

\begin{corollary}\label{Corollary ix}
If $I$ is an ideal of a $G$-graded ring $R$ with $G$ a torsion-free Abelian group, then the colon ideal $\mathfrak{J}(R):_{R}I$ is a graded ideal.
\end{corollary}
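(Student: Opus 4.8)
The plan is to obtain this simply as a special case of Theorem \ref{Theorem vii}, whose hypotheses require the ``inner'' ideal to be a \emph{graded radical} ideal while allowing the ``outer'' ideal to be arbitrary. Here the inner ideal is $\mathfrak{J}(R)$ and the outer ideal is the given $I$, so the entire task reduces to verifying that $\mathfrak{J}(R)$ is a graded radical ideal; once that is established, Theorem \ref{Theorem vii}, applied with the graded radical ideal $\mathfrak{J}(R)$ and the arbitrary ideal $I$, immediately gives that $\mathfrak{J}(R):_{R}I$ is graded.

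First I would invoke Theorem \ref{Theorem II}: since $G$ is a torsion-free Abelian group, the Jacobson radical $\mathfrak{J}(R)$ of the $G$-graded ring $R$ is a graded ideal. This supplies the ``graded'' half of the hypothesis at no cost.

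Next I would check that $\mathfrak{J}(R)$ is a radical ideal. This is the only ingredient not already packaged as a cited theorem, and it is elementary: $\mathfrak{J}(R)=\bigcap_{\mathfrak{m}\in\Max(R)}\mathfrak{m}$ is an intersection of maximal ideals, each of which is prime and hence radical, and an arbitrary intersection of radical ideals is again radical (if $r^{n}$ lies in every $\mathfrak{m}$, then so does $r$). Hence $\mathfrak{J}(R)=\sqrt{\mathfrak{J}(R)}$, and $\mathfrak{J}(R)$ qualifies as a graded radical ideal.

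With both properties in hand, Theorem \ref{Theorem vii} applies verbatim and finishes the argument. I do not anticipate any genuine obstacle: all the substance lives in Theorems \ref{Theorem II} and \ref{Theorem vii}, and the present statement merely records that $\mathfrak{J}(R)$ meets the graded-radical hypothesis of the latter. The one point worth making explicit is the radicality of $\mathfrak{J}(R)$, since that assumption is genuinely used in the proof of Theorem \ref{Theorem vii} (the transfinite-induction step exploits $I=\sqrt{I}$ to pass from membership in every containing prime to membership in $I$), so gradedness alone would not suffice.
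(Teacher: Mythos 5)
Your proposal is correct and follows exactly the paper's own route: invoke Theorem \ref{Theorem II} for gradedness, observe that $\mathfrak{J}(R)$ is a radical ideal (being an intersection of maximal, hence prime, ideals), and apply Theorem \ref{Theorem vii}. The only difference is that you spell out the radicality argument, which the paper asserts in one sentence without proof.
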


\begin{proof}
By Theorem~\ref{Theorem II}, the Jacobson radical of $R$ is a graded ideal. It is also a radical ideal. Hence, the assertion follows from Theorem \ref{Theorem vii}.
\end{proof}

\begin{remark} Note that in the proof of Theorem \ref{Theorem vii} we cannot do the transfinite induction on the set $\{n\in G: d+s\leqslant n\leqslant m+l\}$, because this totally ordered set is not necessarily well-ordered (nor finite). For example, if $G=\mathbb{Z}^{2}$ then $(0,1)<(1,n)<(2,0)$ for all $n\in\mathbb{Z}$. Also note that although $I:_{R}J=\Ann_{R}\big((I+J)/I\big)$ as $R$-modules, we cannot deduce Theorem \ref{Theorem vii} from Theorem \ref{th T-O}, because Theorem \ref{th T-O} cannot be generalized to the setting of modules (see Remark \ref{Remark 2 iki}). 
\end{remark}

In addition to its generalization to the setting of graded rings, the other main novelty and power of the above theorem is that $J$ is an arbitrary (not necessarily graded) ideal. Some consequences of the above theorem are given below.

\begin{corollary} Let $I$ be a graded radical ideal and $J$ an arbitrary ideal of an $M$-graded ring $R$ with $M$ a totally ordered cancellative monoid. Then $I:_{R}J$ is a graded ideal.
\end{corollary}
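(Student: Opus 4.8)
The plan is to reduce to the already-established $G$-graded case, namely Theorem \ref{Theorem vii}, by passing to the Grothendieck group of $M$, exactly as in the proofs of Corollary \ref{Coro 3-uc-Kap} and the Jacobson-radical corollary following Theorem \ref{Theorem II}. First I would let $G$ be the Grothendieck group of $M$. Since $M$ is a totally ordered cancellative monoid, $G$ is a torsion-free Abelian group (indeed a totally ordered group), as recorded in the preliminaries. The canonical morphism of monoids $M\rightarrow G$, $m\mapsto[m,0]$, is injective precisely because $M$ has the cancellation property.

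Next, by changing the grading along this injective map, I would view $R=\bigoplus\limits_{x\in G}S_{x}$ as a $G$-graded ring with $S_{[m,0]}=R_{m}$ and $S_{x}=0$ for $x$ outside the image of $M$. The point I want to stress is that, because $M\rightarrow G$ is injective, the homogeneous components do not change: the $G$-homogeneous pieces are literally the original $M$-homogeneous pieces $R_{m}$, merely relabelled by their images in $G$. Consequently an element is homogeneous for the $M$-grading if and only if it is homogeneous for the $G$-grading, and an ideal is a graded ideal for the $M$-grading if and only if it is a graded ideal for the $G$-grading.

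With this identification in hand, $I$ remains a graded ideal with respect to the $G$-grading, and it is still radical, since being radical is a grading-independent property of an ideal. The ideal $J$ is arbitrary in either picture. Therefore Theorem \ref{Theorem vii} applies to the $G$-graded ring $R$ and shows that $I:_{R}J$ is a graded ideal with respect to the $G$-grading. Transporting back along the coincidence of homogeneous components, $I:_{R}J$ is a graded ideal with respect to the original $M$-grading, which is the claim.

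The only genuine point to verify, and the step I would be most careful about, is the invariance of the notion of \emph{graded ideal} under this change of grading; this hinges entirely on the injectivity of $M\rightarrow G$ (equivalently, on the cancellativity of $M$), which guarantees that the two sets of homogeneous components coincide. Everything else is a direct appeal to Theorem \ref{Theorem vii}, so I do not expect any substantial obstacle beyond this bookkeeping.
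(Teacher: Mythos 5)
Your proposal is correct and is exactly the paper's argument: the paper's own proof is the one-line reduction ``it follows from Theorem \ref{Theorem vii} by passing to the $G$-grading where $G$ is the Grothendieck group of $M$,'' and you have simply spelled out the bookkeeping (injectivity of $M\rightarrow G$ from cancellativity, coincidence of homogeneous components, invariance of gradedness and radicality) that the paper leaves implicit. No gap; your careful verification of the change-of-grading step is the right point to be cautious about, and it goes through as you say.
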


\begin{proof} It follows from Theorem~\ref{Theorem vii} by passing to the $G$-grading where $G$ is the Grothendieck group of $M$ which is a totally ordered group. 
\end{proof}

\begin{corollary}\label{coro 3 old} If $I$ is an ideal of a $G$-graded ring $R$ with $G$ a torsion-free Abelian group, then $\mathfrak{N}:_{R}I$ is a graded ideal.
\end{corollary}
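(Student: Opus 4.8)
The plan is to recognize this as an immediate application of Theorem \ref{Theorem vii}, exactly parallel to the proof of Corollary \ref{Corollary ix}. Theorem \ref{Theorem vii} asserts that if $K$ is a graded radical ideal and $J$ is an arbitrary ideal of a $G$-graded ring (with $G$ torsion-free Abelian), then $K:_{R}J$ is graded. So it suffices to verify that the nilradical $\mathfrak{N}$ plays the role of the graded radical ideal $K$, and then feed $I$ in as the arbitrary ideal $J$.

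First I would check the two required properties of $\mathfrak{N}$. That $\mathfrak{N}$ is a \emph{radical} ideal is automatic, since the nilradical equals its own radical. That $\mathfrak{N}$ is a \emph{graded} ideal is precisely the second assertion of Corollary \ref{Coro 14-ondort}, which applies because $G$ is a torsion-free Abelian group (hence totally ordered, so every minimal prime is graded and thus so is their intersection $\mathfrak{N}$).

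With both hypotheses of Theorem \ref{Theorem vii} satisfied by taking the graded radical ideal to be $\mathfrak{N}$ and the arbitrary ideal to be $I$, I would conclude directly that $\mathfrak{N}:_{R}I$ is a graded ideal. There is essentially no obstacle here: the entire content has been front-loaded into Corollary \ref{Coro 14-ondort} (gradedness of the nilradical) and Theorem \ref{Theorem vii} (gradedness of colon ideals against a graded radical ideal). The corollary is a one-line deduction combining these two, mirroring how Corollary \ref{Corollary ix} combines Theorem \ref{Theorem II} with Theorem \ref{Theorem vii} for the Jacobson radical in place of the nilradical.

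\begin{proof}
The nilradical $\mathfrak{N}$ is a radical ideal. By Corollary \ref{Coro 14-ondort}, since $G$ is a torsion-free Abelian group, $\mathfrak{N}$ is also a graded ideal. Hence, the assertion follows from Theorem \ref{Theorem vii}.
\end{proof}
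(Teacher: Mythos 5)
Your proof is correct and follows exactly the paper's own route: the paper likewise deduces the corollary from Theorem \ref{Theorem vii} by noting that the nilradical is a graded radical ideal, with the gradedness coming from Corollary \ref{Coro 14-ondort}. Your version merely makes that citation explicit, which the paper leaves implicit.
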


\begin{proof}
It follows from Theorem~\ref{Theorem vii},
because the
nilradical of $R$ is a graded radical ideal.
\end{proof}

\begin{corollary}\label{Coro xii}
If $I$ is an ideal of a reduced $G$-graded ring $R$ with $G$ a torsion-free Abelian group, then $\Ann_{R}(I)$ is a graded ideal.
\end{corollary}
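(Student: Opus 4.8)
The plan is to recognize that the annihilator is a special instance of the colon-ideal construction already treated in Theorem~\ref{Theorem vii}, so that essentially no new work is required. First I would observe that, directly from the definitions,
\[
\Ann_{R}(I)=\{r\in R: rI=0\}=\{r\in R: rI\subseteq 0\}=0:_{R}I,
\]
where $0$ denotes the zero ideal of $R$.

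Next I would use the reducedness hypothesis to put this colon ideal into the form demanded by Theorem~\ref{Theorem vii}. Since $R$ is reduced, its nilradical $\mathfrak{N}$ equals the zero ideal; in particular the zero ideal is a radical ideal, and it is trivially a graded ideal. Thus $\Ann_{R}(I)=0:_{R}I$ is the colon ideal of a graded radical ideal by the arbitrary ideal $I$, with $G$ a torsion-free Abelian group. Applying Theorem~\ref{Theorem vii} (with the graded radical ideal taken to be $0$ and $J=I$) immediately yields that $\Ann_{R}(I)$ is a graded ideal. Equivalently, one may simply cite Corollary~\ref{coro 3 old}: because $R$ is reduced we have $\mathfrak{N}=0$, so $\mathfrak{N}:_{R}I=0:_{R}I=\Ann_{R}(I)$, which that corollary already shows to be graded.

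There is no genuine obstacle here; the entire content of the proof is the identification $\Ann_{R}(I)=\mathfrak{N}:_{R}I$ under the reducedness assumption, after which the result is an immediate specialization of the earlier theorem. The only point worth stating explicitly is \emph{why} reducedness is needed: without it the zero ideal need not be radical, and indeed $\Ann_{R}(I)$ need not be graded (the graded-ness of the \emph{colon by a graded radical ideal} is what Theorem~\ref{Theorem vii} supplies, not the colon by an arbitrary graded ideal). I would therefore keep the proof to a single short paragraph, emphasizing this identification and the invocation of Theorem~\ref{Theorem vii} or Corollary~\ref{coro 3 old}.
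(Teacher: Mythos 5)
Your proof is correct and matches the paper's own argument exactly: the paper likewise proves this by noting that reducedness gives $\mathfrak{N}=0$, so $\Ann_{R}(I)=0:_{R}I=\mathfrak{N}:_{R}I$ is graded by Corollary~\ref{coro 3 old} (itself a specialization of Theorem~\ref{Theorem vii}). Nothing further is needed.
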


\begin{proof}
By Corollary \ref{coro 3 old}, $\mathfrak{N}(R):_{R}I=0:_{R}I=\Ann_{R}(I)$ is a graded ideal.
\end{proof}

\begin{corollary}\label{Theorem IV}
Let $I$ be a graded radical ideal of a $G$-graded ring $R=\bigoplus\limits_{i\in G}R_{i}$ and let $f=\sum\limits_{i\in G}r_{i}$ and $g=\sum\limits_{k\in G}r'_{k}$ be elements of $R$ with $G$ a torsion-free Abelian group and $r_{i},r'_{i}\in R_{i}$ for all $i$. Then $fg\in I$ if and only if $r_{i}r'_{k}\in I$ for all $i,k\in G$.
\end{corollary}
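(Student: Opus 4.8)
The plan is to deduce this as a direct consequence of Theorem~\ref{Theorem vii}, applied twice. The \emph{if} direction requires nothing: if $r_{i}r'_{k}\in I$ for all $i,k\in G$, then since $I$ is an ideal we immediately get $fg=\sum\limits_{i,k\in G}r_{i}r'_{k}\in I$ (note that radicality of $I$ plays no role in this direction). So the entire content lies in the \emph{only if} direction, and I would extract it by passing back and forth through colon ideals, exploiting the symmetry of that construction.

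First I would consider the principal ideal $J=Rg$. Since $fg\in I$ and $I$ is an ideal, $fJ=Rfg\subseteq I$, so $f\in I:_{R}J$. Now Theorem~\ref{Theorem vii} applies directly, since $I$ is a graded radical ideal and $J$ is an arbitrary ideal: it tells us that $I:_{R}J$ is a graded ideal, and therefore each homogeneous component $r_{i}$ of $f$ lies in $I:_{R}J$. In particular $r_{i}g\in I$ for every $i\in G$. This is the step where all the real work is hidden, since it is precisely Theorem~\ref{Theorem vii} that converts the single condition $fg\in I$ into the homogeneous conditions $r_{i}g\in I$.

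The remaining step is to open up $g$ in the same manner. Fixing $i$, I would set $J'=Rr_{i}$; from $r_{i}g\in I$ we obtain $gJ'=Rgr_{i}=Rr_{i}g\subseteq I$ (using that $R$ is commutative), hence $g\in I:_{R}J'$. Applying Theorem~\ref{Theorem vii} once more shows that $I:_{R}J'$ is graded, so each homogeneous component $r'_{k}$ of $g$ lies in $I:_{R}J'$, giving $r_{i}r'_{k}=r'_{k}r_{i}\in I$. Letting $i$ and $k$ range over $G$ completes the argument.

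The only point worth flagging is that a \emph{single} application of Theorem~\ref{Theorem vii} is not enough: it homogenizes only the factor $f$ and yields $r_{i}g\in I$, not the desired $r_{i}r'_{k}\in I$. The symmetry of the colon-ideal construction is what allows me to iterate and peel off the homogeneous components of $g$ as well, and this second pass is the conceptual (though entirely routine) heart of the proof. No genuine obstacle remains once Theorem~\ref{Theorem vii} is in hand.
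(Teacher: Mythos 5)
Your proof is correct, and it follows essentially the same route as the paper: both deduce the statement from Theorem~\ref{Theorem vii} by placing one factor in a colon ideal. The one genuine difference is in the second step, and there your flagged claim --- that a single application of Theorem~\ref{Theorem vii} cannot suffice --- is mistaken. The paper applies the theorem only once: from $fg\in I$ it gets $g\in I:_{R}Rf$, hence $fr'_{k}\in I$ for all $k$, and then finishes with the elementary observation that $I$ itself is a graded ideal. Indeed, $fr'_{k}=\sum\limits_{i\in G}r_{i}r'_{k}$, and the nonzero terms $r_{i}r'_{k}$ lie in pairwise distinct degrees $i+k$ (as $i$ varies, since $G$ is a group and hence cancellative), so they are precisely the homogeneous components of $fr'_{k}$ and each lies in $I$. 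The same shortcut is available in your ordering: from $r_{i}g\in I$ and $r_{i}g=\sum\limits_{k\in G}r_{i}r'_{k}$, gradedness of $I$ alone yields $r_{i}r'_{k}\in I$. Your second pass through Theorem~\ref{Theorem vii} (showing $I:_{R}Rr_{i}$ is graded) is valid but invokes a nontrivial theorem where only the definition of a graded ideal is needed; otherwise the two arguments are the same.
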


\begin{proof}
If $fg\in I$, then $g\in I:_{R}Rf$. By
Theorem~\ref{Theorem vii}, $I:_{R}Rf$ is a graded ideal. Thus, $fr'_{k}\in I$ for all $k\in G$. It follows that each $r_{i}r'_{k}\in I$, since $I$ is a graded ideal. The reverse implication is obvious.
\end{proof}

\begin{corollary}\label{Theorem III}
Let $f=\sum\limits_{i\in G}r_{i}$ and $g=\sum\limits_{k\in G}r'_{k}$ be elements of a $G$-graded ring $R=\bigoplus\limits_{i\in G}R_{i}$ with $G$ a torsion-free Abelian group and $r_{i},r'_{i}\in R_{i}$ for all $i$. Then $fg$ is nilpotent if and only if $r_{i}r'_{k}$ is nilpotent for all $i,k\in G$.
\end{corollary}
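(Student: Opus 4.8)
The plan is to derive Corollary~\ref{Theorem III} directly from Corollary~\ref{Theorem IV} by a clever choice of the graded radical ideal $I$. The key observation is that the nilradical $\mathfrak{N}$ of $R$ is precisely such an ideal: since $G$ is a torsion-free Abelian group (equivalently, a totally ordered group), Corollary~\ref{Coro 14-ondort} guarantees that $\mathfrak{N}$ is a graded ideal, and it is tautologically a radical ideal. Thus $I=\mathfrak{N}$ satisfies all the hypotheses of Corollary~\ref{Theorem IV}.

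With this choice in hand, the proof is essentially a translation exercise. First I would recall that an element $h\in R$ is nilpotent if and only if $h\in\mathfrak{N}$. Applying this to $fg$, the statement ``$fg$ is nilpotent'' becomes ``$fg\in\mathfrak{N}$'', and applying it to each product $r_{i}r'_{k}$, the statement ``$r_{i}r'_{k}$ is nilpotent for all $i,k\in G$'' becomes ``$r_{i}r'_{k}\in\mathfrak{N}$ for all $i,k\in G$''. Corollary~\ref{Theorem IV}, with $I=\mathfrak{N}$, asserts exactly the equivalence $fg\in\mathfrak{N}\iff r_{i}r'_{k}\in\mathfrak{N}$ for all $i,k$, which is precisely the desired biconditional.

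There is essentially no obstacle here, since all the real work was done in Theorem~\ref{Theorem vii} (the transfinite-induction argument) and packaged into Corollary~\ref{Theorem IV}. The only point requiring the slightest care is confirming that $\mathfrak{N}$ is genuinely graded, which is why the torsion-free hypothesis on $G$ cannot be dropped; Example~\ref{Example i-nice} shows that the nilradical of a $G$-graded ring need not be graded when $G$ has torsion, so the corollary would fail in that setting. Once the reader accepts that $\mathfrak{N}$ is a graded radical ideal, the corollary follows in a single line by invoking Corollary~\ref{Theorem IV}.
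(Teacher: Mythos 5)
Your proposal is correct and matches the paper's own argument: the paper proves Corollary~\ref{Theorem III} by invoking Corollary~\ref{Theorem IV}, with the implicit choice $I=\mathfrak{N}$ that you spell out (the nilradical being a graded radical ideal by Corollary~\ref{Coro 14-ondort}). Your write-up just makes this one-line deduction explicit.
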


\begin{proof} It follows from  Corollary \ref{Theorem IV}.
\end{proof}

All of the above four results can be easily generalized to the setting of $M$-graded rings where $M$ is a totally ordered monoid with the cancellation property. 

\begin{example}(An ideal whose annihilator is not graded)\label{Ex:Deligne}
In order to find an ideal in a $G$-graded ring (with $G$ a torsion-free Abelian group) whose annihilator is not a graded ideal, the ideal should not be a graded ideal and by Corollary~\ref{Coro xii}, the ring must not be reduced. The question of whether such an ideal exists is highly interesting in the light of Theorem~\ref{th T-O} and its consequences.
Finding a concrete example of such an ideal is not an easy task, but fortunately Pierre Deligne has provided us with an example: Let $k$ be a field (or, an integral domain) and let $R$ be the polynomial ring $k[x_{1},x_{2},x_{3},x_{4}]$ modulo the ideal $I=(x_{1}x_{3},x_{2}x_{4}, x_{1}x_{4}+x_{2}x_{3})$. Then in the $\mathbb{N}$-graded polynomial ring $S:=R[T]$ with $\deg(T)=1$ we have $(a_{1}T+a_{2})(a_{3}T+a_{4})=0$ where $a_{i}:=x_{i}+I$. Thus, $a_{1}T+a_{2}\in\Ann_{S}(a_{3}T+a_{4})$, but the annihilator does not contain $a_{1}T$ nor $a_{2}$, because $x_{1}x_{4}, x_{2}x_{3}\notin I$. Hence, $\Ann_{S}(a_{3}T+a_{4})$ is not a graded ideal of $S$.
\end{example}

Recall that if $I$ is a nonempty subset of a ring $R$ and $M$ is a monoid, then by $I[M]$ we mean the set of all $\sum\limits_{m\in M}r_{m}\epsilon_{m}\in R[M]$ such that $r_{m}\in I$ for all $m\in M$. If $I$ is an ideal of $R$ then $I[M]$ is a graded ideal of $R[M]$. In fact, $I[M]$ is the extension of $I$ under the canonical ring map $R\rightarrow R[M]$ that is given by $r\mapsto r\epsilon_{0}$. It is also clear that the contraction of $I[M]$ under this map equals $I$. We have then the following result which codifies some important properties of monoid-rings. 

\begin{theorem}\label{Corollary onbes-fifteen} Let $R$ be a ring and $M$ a totally ordered cancellative monoid. Then the following assertions hold. \\ 
$\mathbf{(i)}$ A nonempty subset  $\mathfrak{p}$ of $R$ is a prime ideal of $R$ if and only if  $\mathfrak{p}[M]$ is a prime ideal of $R[M]$. In particular, $R$ is a domain if and only if $R[M]$ is a domain. \\
$\mathbf{(ii)}$ The minimal primes of $R[M]$ are precisely of the form $\mathfrak{p}[M]$ where $\mathfrak{p}$ is a minimal prime of $R$. \\
$\mathbf{(iii)}$ The nilradical of $R[M]$ is of the form $\mathfrak{N}[M]$ where $\mathfrak{N}$ is the nilradical of $R$. \\
$\mathbf{(iv)}$ $R$ is a reduced ring if and only if $R[M]$ is reduced. \\
$\mathbf{(v)}$ $Z(R[M])\subseteq\bigcup\limits_{\mathfrak{p}\in\Spec(R)}
\mathfrak{p}[M]$. \\
$\mathbf{(vi)}$ If $R$ is a zero-dimensional ring, then $Z(R[M])=\bigcup\limits_{\mathfrak{p}\in\Spec(R)}
\mathfrak{p}[M]$. \\
$\mathbf{(vii)}$ For given $f=\sum\limits_{x\in M}r_{x}\epsilon_{x}$ and $g=\sum\limits_{y\in M}r'_{y}\epsilon_{y}$ in $R[M]$, then $fg$ is nilpotent if and only if $r_{x}r'_{y}$ is nilpotent for all $x,y\in M$. \\
$\mathbf{(viii)}$ An ideal $I$ of $R$ is faithful if and only if $I[M]$ is a faithful ideal of $R[M]$. 
\end{theorem}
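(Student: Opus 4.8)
The plan is to work throughout in the Grothendieck group $G$ of $M$, which by hypothesis is a totally ordered---hence torsion-free Abelian---group, and to view $R[M]=\bigoplus_{m\in M}R\epsilon_{m}$ as a $G$-graded ring through the canonical injection $M\hookrightarrow G$ (injective since $M$ is cancellative, so the homogeneous components are unchanged). With this setup most of the eight assertions are read off from results already established, and I would arrange them so that (i) and (ii) do the real work while the rest are consequences.

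For (i) I would use the canonical isomorphism $R[M]/\mathfrak{p}[M]\cong(R/\mathfrak{p})[M]$, whose kernel is precisely $\mathfrak{p}[M]$. If $\mathfrak{p}$ is prime then $R/\mathfrak{p}$ is a domain, and applying Corollary~\ref{Coro onalti 16} to the zero ideal of $(R/\mathfrak{p})[M]$ (a graded ideal meeting the homogeneous-product hypothesis because $(a\epsilon_{x})(b\epsilon_{y})=ab\,\epsilon_{x+y}$) shows $(R/\mathfrak{p})[M]$ is a domain, so $\mathfrak{p}[M]$ is prime; the converse is just that $\mathfrak{p}$ is the contraction of $\mathfrak{p}[M]$ along $R\to R[M]$ and contractions of primes are prime, and the ``in particular'' clause is the case $\mathfrak{p}=0$. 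For (ii), Corollary~\ref{Coro 14-ondort} tells us every minimal prime $Q$ of $R[M]$ is graded; since each $\epsilon_{m}$ is a non-zero-divisor whereas elements of a minimal prime are zero-divisors, $\epsilon_{m}\notin Q$ for all $m$. Writing $\mathfrak{p}=Q\cap R$, the relation $r\epsilon_{m}=\epsilon_{m}(r\epsilon_{0})$ together with primeness and $\epsilon_{m}\notin Q$ gives $r\epsilon_{m}\in Q\iff r\in\mathfrak{p}$, so the graded ideal $Q$ equals $\mathfrak{p}[M]$; minimality of $Q$ forces $\mathfrak{p}$ to be a minimal prime of $R$ (via (i) and contraction), and conversely, if $\mathfrak{p}$ is a minimal prime of $R$ then $\mathfrak{p}[M]$ is prime by (i) and any minimal prime of $R[M]$ beneath it has, by the first half, the form $\mathfrak{q}[M]$ with $\mathfrak{q}\subseteq\mathfrak{p}$, forcing $\mathfrak{q}=\mathfrak{p}$.

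The remaining parts follow quickly. Since the nilradical is the intersection of the minimal primes, (ii) gives $\mathfrak{N}(R[M])=\bigcap_{\mathfrak{p}}\mathfrak{p}[M]=\big(\bigcap_{\mathfrak{p}}\mathfrak{p}\big)[M]=\mathfrak{N}[M]$, which is (iii); then (iv) is immediate since $\mathfrak{N}[M]=0\iff\mathfrak{N}=0$. For (vii) I would invoke Corollary~\ref{Theorem III} in the $G$-grading and translate its homogeneous components: $(r_{x}\epsilon_{x})(r'_{y}\epsilon_{y})=r_{x}r'_{y}\,\epsilon_{x+y}$ is nilpotent iff $r_{x}r'_{y}$ is, as $\epsilon_{x+y}$ is a non-zero-divisor. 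For (viii), if $I$ is unfaithful with $0\neq a$ and $aI=0$, then $a\epsilon_{0}$ annihilates $I[M]$; conversely, if $I[M]$ is unfaithful then Corollary~\ref{Coro 5-bes-Kap} yields a nonzero $a\in R$ with $a\,I[M]=0$, and testing against $b\epsilon_{0}$ for $b\in I$ gives $aI=0$.

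Finally, for (v), given $f=\sum_{x}r_{x}\epsilon_{x}\in Z(R[M])$ the ideal $R[M]f$ is unfaithful, so Corollary~\ref{Coro 5-bes-Kap} produces a nonzero $a\in R$ with $af=0$; then $a$ kills every coefficient $r_{x}$, the ideal $\mathfrak{a}$ of $R$ they generate is proper, and choosing a prime $\mathfrak{p}\supseteq\mathfrak{a}$ gives $f\in\mathfrak{a}[M]\subseteq\mathfrak{p}[M]$. For (vi) I would add the reverse inclusion: when $\dim R=0$ every prime is minimal, so by (ii) each $\mathfrak{p}[M]$ is a minimal prime of $R[M]$ and hence consists of zero-divisors, giving $\bigcup_{\mathfrak{p}}\mathfrak{p}[M]\subseteq Z(R[M])$. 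I expect the genuine obstacle to be step (ii)---identifying the a priori opaque minimal primes of $R[M]$ with the extended ideals $\mathfrak{p}[M]$. The gradedness supplied by Corollary~\ref{Coro 14-ondort} is the crucial input, and the argument leans on the cancellation hypothesis twice: once so that each $\epsilon_{m}$ is a non-zero-divisor (letting us ``divide'' by $\epsilon_{m}$ inside $Q$), and once through the passage to the totally ordered Grothendieck group that makes Corollary~\ref{Coro 14-ondort} applicable at all. Everything else is bookkeeping layered on this identification.
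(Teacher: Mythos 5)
Your proposal is correct and follows essentially the same route as the paper: Corollary \ref{Coro onalti 16} for primality of $\mathfrak{p}[M]$, the $M$-graded version of Corollary \ref{Coro 14-ondort} plus the fact that $\epsilon_{m}$ is a non-zero-divisor (hence lies outside every minimal prime) to identify the minimal primes as the $\mathfrak{p}[M]$, and Corollaries \ref{Coro 5-bes-Kap}/\ref{Coro 6-altin} and \ref{Theorem III} for parts (v)--(viii). The only deviations are cosmetic: you obtain (iii) by intersecting the minimal primes from (ii) where the paper argues directly with homogeneous nilpotents, and in (viii) you invoke Corollary \ref{Coro 5-bes-Kap} where the paper uses Corollary \ref{Coro 6-altin} and divides by $\epsilon_{m}$.
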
 

\begin{proof} (i): If $\mathfrak{p}$ is a prime ideal of $R$, then by Corollary \ref{Coro onalti 16}, $\mathfrak{p}[M]$ is a prime ideal of $R$. Conversely, it can be easily seen that for a nonempty subset $\mathfrak{p}\subseteq R$, if $\mathfrak{p}[M]$ is a (prime) ideal of $R[M]$ then $\mathfrak{p}$ is a (prime) ideal of $R$. \\
(ii): If $\mathfrak{p}$ is a minimal prime ideal of $R$, then by (ii), $\mathfrak{p}[M]$ is a prime ideal of $R$. Let $Q$ be a minimal prime of $R[M]$ with $Q\subseteq\mathfrak{p}[M]$. Then $Q\cap R\subseteq\mathfrak{p}[M]\cap R=\mathfrak{p}$ and so $Q\cap R=\mathfrak{p}$. This yields that $Q=\mathfrak{p}[M]$. Now let $P$ be a minimal prime of $R[M]$. Then $\mathfrak{p}:=P\cap R$ is a minimal prime of $R$ and it is easy to see that $\mathfrak{p}[M]\subseteq P$. By the $M$-graded version of Corollary \ref{Coro 14-ondort}, $P$ is a graded ideal of $R[M]$. Hence, to see the inclusion $P\subseteq\mathfrak{p}[M]$, it suffices to check it for homogeneous elements of $P$. Take $r\epsilon_{m}\in P$ where $r\in R$ and $m\in M$. We may write $r\epsilon_{m}=(r\epsilon_{0})\epsilon_{m}$. But $\epsilon_{m}\notin P$, because $\epsilon_{m}$ is a non-zero-divisor of $R[M]$ while $P$ is a minimal prime of $R[M]$ and hence it is contained in the set of zero-divisors of $R[M]$. It follows that $r=r\epsilon_{0}\in P\cap R=\mathfrak{p}$. Hence, $r\epsilon_{m}\in\mathfrak{p}[M]$. \\
(iii): It is clear that $\mathfrak{N}[M]$ is contained in the nilradical of $R[M]$. By the $M$-graded version of Corollary \ref{Coro 14-ondort}, the nilradical of $R[M]$ is a graded ideal. Hence, to prove the reverse inclusion, it suffices to check it for homogeneous nilpotents. If  $r\epsilon_{m}=(r\epsilon_{0})\epsilon_{m}$ is nilpotent with $r\in R$ and $m\in M$, then $r=r\epsilon_{0}$ is nilpotent because $\epsilon_{m}$ is a non-zero-divisor. Thus $r\epsilon_{m}\in\mathfrak{N}[M]$. \\
(iv): It is clear from (iii).  \\
(v): If $f=\sum\limits_{m\in M}r_{m}\epsilon_{m}$ is a zero-divisor of $R[M]$ then by Corollary \ref{Coro 5-bes-Kap}, there exists a nonzero $a\in R$ such that $af=0$. It follows that $ar_{m}=0$ for all $m\in M$. Consider the ideal $I=(r_{m}: m\in M)$ of $R$.
Then  $I$ is a proper ideal of $R$, because $a\neq0$. Thus, there exists a prime ideal $\mathfrak{p}$ of $R$ such that $I\subseteq\mathfrak{p}$ and so $f\in\mathfrak{p}[M]$. \\
(vi): If $\mathfrak{p}$ is a prime ideal of $R$ then by (ii), $\mathfrak{p}[M]$ is a minimal prime of $R[M]$. But it is well-known that every minimal prime of a ring is contained in the set of zero-divisors. So $\bigcup\limits_{\mathfrak{p}\in\Spec(R)}
\mathfrak{p}[M]\subseteq Z(R[M])$. The reverse inclusion follows from 
(v). \\
(vii): By the $M$-graded version of Corollary \ref{Theorem III}, $fg$ is nilpotent if and only if every $(r_{x}\epsilon_{x})(r'_{y}\epsilon_{y})=r_{x}r'_{y}\epsilon_{x+y}$ is nilpotent. But $\epsilon_{x+y}$ is a non-zero-divisor. Hence, $fg$ is nilpotent if and only if each $r_{x}r'_{y}$ is nilpotent. \\
(viii): Assume $I$ is faithful. If $I[M]$ is an unfaithful ideal of $R[M]$, then by Corollary \ref{Coro 6-altin}, $(r\epsilon_{m})I[M]=0$ for some $0\neq r\in R$ and some $m\in M$. But $\epsilon_{m}$ is a non-zero-divisor, and hence $r\in\Ann(I)=0$ which is a contradiction. The reverse implication is clear. 
\end{proof}

The above theorem, in particular, can be applied to the ring of polynomials as well as to the ring of Laurent polynomials with any (finite or infinite) number of variables. Theorem \ref{Corollary onbes-fifteen}(vii) generalizes  Armendariz' result \cite[Lemma 1]{Armendariz} and also  \cite[Proposition 2.1]{Antoine} to the more general setting of monoid-rings.

\begin{example} Theorem \ref{Corollary onbes-fifteen} does not hold in general. For example, let $G$ be an Abelian group which is not a totally ordered group. So $G$ has a nonzero element $g$ of finite order $n\geqslant2$. Let $p$ be a prime number which divides $n$. Then $x:=(n/p)g$ is a nonzero element of order $p$. Let $R$ be an integral domain (or, a reduced ring) of characteristic $p$. Then the group-ring $R[G]$ is not reduced, because the Frobenius endomorphism yields that $(\epsilon_{0}-\epsilon_{x})^{p}=
(\epsilon_{0})^{p}-(\epsilon_{x})^{p}=\epsilon_{0}-\epsilon_{px}=0$, but $\epsilon_{0}\neq\epsilon_{x}$. Furthermore, $\epsilon_{0}-\epsilon_{x}$ is a zero-divisor of $R[G]$ but it is not contained in  
$\bigcup\limits_{\mathfrak{p}\in\Spec(R)}
\mathfrak{p}[G]$.
\end{example}

In the following result we completely characterize units in group-rings:  

\begin{theorem}\label{coro ET new 20} If $R$ is a ring and $G$ is a torsion-free Abelian group, then the units of $R[G]$ are precisely of the form $f=\sum\limits_{x\in G}r_{x}\epsilon_{x}$ with $(r_{x}: x \in G)=R$ and $r_{x}r_{y}$ is nilpotent for all $x\neq y$. 
\end{theorem}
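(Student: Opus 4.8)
The plan is to deduce the statement directly from Theorem \ref{Theorem ix} applied to the group-ring $R[G]$, viewed as a $G$-graded ring. Recall that $R[G]=\bigoplus_{x\in G}R\epsilon_{x}$ is $G$-graded with homogeneous components $R\epsilon_{x}$, so the homogeneous component of $f=\sum_{x\in G}r_{x}\epsilon_{x}$ in degree $x$ is exactly $r_{x}\epsilon_{x}$. Since $G$ is a torsion-free Abelian group, Theorem \ref{Theorem ix} tells us that $f$ is invertible in $R[G]$ if and only if the ideal $(r_{x}\epsilon_{x}:x\in G)$ equals $R[G]$ and $(r_{x}\epsilon_{x})(r_{y}\epsilon_{y})$ is nilpotent for all $x\neq y$. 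It then remains only to rephrase these two conditions in terms of the coefficients $r_{x}\in R$.

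First I would observe that each $\epsilon_{x}$ is a (central, two-sided) unit of $R[G]$, with inverse $\epsilon_{-x}$, because $G$ is a group; indeed $\epsilon_{x}$ commutes with every element $r\epsilon_{0}$ of $R$ and with every $\epsilon_{y}$ since $G$ is Abelian. Consequently the ideal $(r_{x}\epsilon_{x}:x\in G)$ generated by the homogeneous components coincides with the ideal generated by the coefficients $r_{x}$ (multiplying $r_{x}\epsilon_{x}$ by the unit $\epsilon_{-x}$ recovers $r_{x}$, and conversely), and the latter is the extension $J[G]$ of $J=(r_{x}:x\in G)$ along $R\hookrightarrow R[G]$. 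Using the fact recorded before Theorem \ref{Corollary onbes-fifteen} that the contraction of $J[G]$ back to $R$ is again $J$, I conclude that $J[G]=R[G]$ precisely when $J=R$, i.e.\ when $(r_{x}:x\in G)=R$.

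For the nilpotency condition, I would compute $(r_{x}\epsilon_{x})(r_{y}\epsilon_{y})=r_{x}r_{y}\epsilon_{x+y}$ and invoke the centrality of the unit $\epsilon_{x+y}$: raising to the $n$-th power gives $(r_{x}r_{y})^{n}\epsilon_{n(x+y)}$, which vanishes if and only if $(r_{x}r_{y})^{n}=0$, since $\epsilon_{n(x+y)}$ is a non-zero-divisor (as $G$ is cancellative). Hence $(r_{x}\epsilon_{x})(r_{y}\epsilon_{y})$ is nilpotent if and only if $r_{x}r_{y}$ is nilpotent. Combining the two reformulations yields exactly the asserted characterization.

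I do not anticipate a serious obstacle here: the whole content is transporting the homogeneous-component criterion of Theorem \ref{Theorem ix} across the grading $R[G]=\bigoplus_{x\in G}R\epsilon_{x}$. The only points requiring a little care are that $\epsilon_{x}$ is simultaneously a unit and central (so that both the ideal-generation and the nilpotency conditions pass cleanly between $r_{x}\epsilon_{x}$ and $r_{x}$), together with the bookkeeping that $(r_{x}\epsilon_{x}:x\in G)=R[G]$ contracts to $(r_{x}:x\in G)=R$.
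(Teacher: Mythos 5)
Your proposal is correct, and its skeleton is the same as the paper's: both deduce the theorem by applying Theorem~\ref{Theorem ix} to $R[G]$ viewed as a $G$-graded ring with homogeneous components $R\epsilon_{x}$, and both handle the nilpotency condition identically (via $\epsilon_{x+y}$ being central and a non-zero-divisor). Where you genuinely diverge is in translating the ideal condition $(r_{x}\epsilon_{x}:x\in G)=R[G]$ into $(r_{x}:x\in G)=R$. The paper routes both directions of this translation through prime ideals: in the forward direction it supposes $(r_{x}:x\in G)\subseteq\mathfrak{p}$ and gets a contradiction from $R[G]\subseteq\mathfrak{p}[G]$, invoking Theorem~\ref{Corollary onbes-fifteen}(i) to know $\mathfrak{p}[G]$ is prime (hence proper); in the reverse direction it does not even quote the backward implication of Theorem~\ref{Theorem ix}, but instead re-runs Lemma~\ref{Lemma 3-easy} after establishing $(r_{x}\epsilon_{x}:x\in G)=R[G]$ by yet another prime-ideal contradiction. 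Your translation is purely formal: since each $\epsilon_{x}$ is a central unit, the ideal generated by the components $r_{x}\epsilon_{x}$ coincides with the extension $J[G]$ of $J=(r_{x}:x\in G)$, and $J[G]=R[G]$ holds exactly when $J=R$ by the contraction fact recorded before Theorem~\ref{Corollary onbes-fifteen}. This buys a shorter and more elementary argument — no prime ideals, no appeal to Theorem~\ref{Corollary onbes-fifteen}(i), and both implications of the equivalence are handled at once — while the paper's version is merely more self-contained in that its backward direction repeats the mechanism of Lemma~\ref{Lemma 3-easy} explicitly. Substantively the two proofs carry the same content, and yours has no gap.
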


\begin{proof} If $f=\sum\limits_{x\in G}r_{x}\epsilon_{x}$ is invertible in $R[G]$, then by Theorem \ref{Theorem ix}, we have $(r_{x}\epsilon_{x} : x\in G)=R[G]$ and $(r_{x}\epsilon_{x})(r_{y}\epsilon_{y})=
r_{x}r_{y}\epsilon_{x+y}$ is nilpotent for all $x\neq y$. But $\epsilon_{x+y}$ is a non-zero-divisor of $R[G]$, because $G$ has the cancellation property. This shows that $r_{x}r_{y}$ is nilpotent for all $x\neq y$. We also have $(r_{x}: x \in G)=R$. If not, then $(r_{x}: x \in G)\subseteq\mathfrak{p}$ for some prime ideal $\mathfrak{p}$ of $R$. Then $R[G]=(r_{x}\epsilon_{x} : x\in G)\subseteq\mathfrak{p}[G]$. But this is a contradiction, because by Theorem \ref{Corollary onbes-fifteen}(i), $\mathfrak{p}[G]$ is a prime ideal of $R[G]$. Now we prove the reverse implication. Since $r_{x}r_{y}$ is nilpotent, thus $(r_{x}\epsilon_{x})(r_{y}\epsilon_{y})$ is also nilpotent for all $x\neq y$. Next, we show that $(r_{x}\epsilon_{x} : x\in G)=R[G]$. If not, then $(r_{x}\epsilon_{x} : x\in G)\subseteq P$ for some prime ideal $P$ of $R[G]$. Since $G$ is a group, $r_{x}\epsilon_{0}=(r_{x}\epsilon_{x})\epsilon_{-x}\in P$ for all $x\in G$. But we have $1=\sum\limits_{x\in G}s_{x}r_{x}$ with $s_{x}\in R$ for all $x$. It follows that $\epsilon_{0}=\sum\limits_{x\in G}s_{x}r_{x}\epsilon_{0}=\sum\limits_{x\in G}(s_{x}\epsilon_{0})(r_{x}\epsilon_{0})\in P$ which is a contradiction. Then by Lemma \ref{Lemma 3-easy}, $f=\sum\limits_{x\in G}r_{x}\epsilon_{x}$ is invertible in $R[G]$. 
\end{proof}

If $f=\sum\limits_{x\in G}r_{x}\epsilon_{x}$ is an invertible element of $R[M]$ with $M$ a totally ordered cancellative monoid then just like the above result, $(r_{x}: x \in M)=R$ and $r_{x}r_{y}$ is nilpotent for all $x\neq y$. But it is important to note that if $M$ is not a group, then the reverse implication is not true. Indeed, in this case, there exists some $x\in M$ that is not invertible in $M$. Then the element $f=\epsilon_{x}$ satisfies the condition of the above result, but it is not invertible in $R[M]$.

\begin{remark}\label{Remark 1-bir} Here we describe an interesting connection between the general graded rings and monoid-rings (which is so useful in providing simple alternative proofs or discovering new observations in graded ring theory): Let $R=\bigoplus\limits_{m\in M}R_{m}$ be an $M$-graded ring with $1\in R_0$ where 0 is the identity element of $M$ (e.g. $M$ is a cancellative monoid). Then $R$ can be naturally viewed as a graded subring of the monoid-ring $R[M]$ via the embedding $\theta:R\rightarrow R[M]$ which sends each $r_{m}\in R_{m}$ into $r_{m}\epsilon_{m}$ for all $m\in M$. In particular, every $\mathbb{N}$-graded ring $R=\bigoplus\limits_{n\geqslant0}R_{n}$ can be viewed as a graded subring of the polynomial ring $R[x]$ via the embedding $R\rightarrow R[x]$ which sends each $r\in R_{n}$ into $rx^{n}$ for all $n\geqslant0$, and every $\mathbb{Z}$-graded ring $R=\bigoplus\limits_{n\in\mathbb{Z}}R_{n}$ can be viewed as a graded subring of the ring of Laurent polynomials $R[x,x^{-1}]$ via the embedding $R\rightarrow R[x,x^{-1}]$ which sends each $r\in R_{n}$ into $rx^{n}$ for all $n\in\mathbb{Z}$. The ring $R[x,x^{-1}]$ also can be viewed as a subring of the formal power series ring $R[[x]]$. Indeed, using the universal property of polynomial rings, we obtain an injective morphism of $R$-algebras $R[x]\rightarrow R[[x]]$ given by $x\mapsto 1-x$. But $1-x$ is invertible in $R[[x]]$ (remember that an element of $R[[x]]$ is invertible if and only if its constant term is invertible in $R$). Thus by the universal property of localizations, we get an injective morphism of $R$-algebras $\psi:R[x,x^{-1}]\rightarrow R[[x]]$ with $\psi(x)=1-x$ and $\psi(x^{-1})=(1-x)^{-1}=\sum\limits_{k\geqslant0}x^{k}$. Finally, if $M$ is a totally ordered monoid with the cancellation property then an ideal $I$ of $R$ is faithful if and only if its extension under the above embedding $\theta:R\rightarrow R[M]$ is a faithful ideal of $R[M]$. Indeed, assume $I$ is faithful. If the extension ideal $I^{e}$ is unfaithful then by Corollary \ref{Coro 5-bes-Kap}, $rI^{e}=0$ for some $0\neq r\in R$. If $f=\sum\limits_{x\in M}r_{x}\in I$ then $\theta(f)=\sum\limits_{x\in M}r_{x}\epsilon_{x}\in I^{e}$ and so $\sum\limits_{x\in M}rr_{x}\epsilon_{x}=0$. Since $M$ has the cancellation property, $rr_{x}\epsilon_{x}=0$ and so $rr_{x}=0$ for all $x\in M$. This shows that $rf=0$. Hence, $r\in\Ann_{R}(I)=0$ which is a contradiction. The reverse implication is proved similarly, by applying Corollary \ref{Coro 6-altin} instead of Corollary \ref{Coro 5-bes-Kap}.
\end{remark} 

We have the following theorem which is obtained in the light of the above results. 
 
\begin{theorem}\label{Bergman's th charact} For an Abelian group $G$ the following assertions are equivalent: \\
$\mathbf{(i)}$ $G$ is a totally ordered group.  \\
$\mathbf{(ii)}$ $G$ is a torsion-free group. \\
$\mathbf{(iii)}$ The nilradical of every $G$-graded ring is a graded ideal. \\
$\mathbf{(iv)}$ For each prime number $p$, the nilradical of the group-ring $(\mathbb{Z}/p)[G]$ is a graded ideal. \\
$\mathbf{(v)}$ The Jacobson radical of every $G$-graded ring is a graded ideal. \\
$\mathbf{(vi)}$ For each prime number $p$, the Jacobson radical of the group-ring $(\mathbb{Z}/p)[G]$ is a graded ideal. \\
$\mathbf{(vii)}$ The idempotents of every $G$-graded ring is contained in its base subring. \\
$\mathbf{(viii)}$ For any ring $R$, every idempotent of the group-ring $R[G]$ is contained in $R$. \\
$\mathbf{(ix)}$ For any field $K$, the group-ring $K[G]$ has no nontrivial idempotents. \\
$\mathbf{(x)}$ The group-ring $\mathbb{Q}[G]$ has no nontrivial idempotents. \\
$\mathbf{(xi)}$ For any integral domain $R$, the group-ring $R[G]$ is an integral domain. \\
$\mathbf{(xii)}$ The group-ring $\mathbb{Q}[G]$ 
is an integral domain. \\
$\mathbf{(xiii)}$ For any integral domain $R$, the units of the group-ring $R[G]$ are precisely of the form $r\epsilon_{g}$ with $r$ is invertible in $R$ and $g\in G$. \\
$\mathbf{(xiv)}$ The units of the group-ring $\mathbb{Q}[G]$ are precisely of the form  $r\epsilon_{g}$ with $0\neq r\in\mathbb{Q}$ and $g\in G$. 
\end{theorem}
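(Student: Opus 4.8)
The plan is to show that every one of the fourteen statements is equivalent to the torsion-free condition (ii); this suffices, since $\mathrm{(i)}\Leftrightarrow\mathrm{(ii)}$ is already recorded in the preliminaries. I would arrange the remaining statements into five implication cycles, each passing through (ii):
\begin{align*}
&\mathrm{(ii)}\Rightarrow\mathrm{(iii)}\Rightarrow\mathrm{(iv)}\Rightarrow\mathrm{(ii)},\\
&\mathrm{(ii)}\Rightarrow\mathrm{(v)}\Rightarrow\mathrm{(vi)}\Rightarrow\mathrm{(ii)},\\
&\mathrm{(ii)}\Rightarrow\mathrm{(vii)}\Rightarrow\mathrm{(viii)}\Rightarrow\mathrm{(ix)}\Rightarrow\mathrm{(x)}\Rightarrow\mathrm{(ii)},\\
&\mathrm{(ii)}\Rightarrow\mathrm{(xi)}\Rightarrow\mathrm{(xii)}\Rightarrow\mathrm{(ii)},\\
&\mathrm{(ii)}\Rightarrow\mathrm{(xiii)}\Rightarrow\mathrm{(xiv)}\Rightarrow\mathrm{(ii)}.
\end{align*}
Because each line is a cycle through (ii), every statement it contains is forced to be equivalent to (ii).

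The arrows leaving (ii) are all direct appeals to the structural theorems proved earlier: $\mathrm{(ii)}\Rightarrow\mathrm{(iii)}$ is Corollary \ref{Coro 14-ondort}, $\mathrm{(ii)}\Rightarrow\mathrm{(v)}$ is Theorem \ref{Theorem II}, $\mathrm{(ii)}\Rightarrow\mathrm{(vii)}$ is Theorem \ref{Lemma T-K}, $\mathrm{(ii)}\Rightarrow\mathrm{(xi)}$ follows from Corollary \ref{Coro onalti 16} (equivalently Theorem \ref{Corollary onbes-fifteen}(i)), and $\mathrm{(ii)}\Rightarrow\mathrm{(xiii)}$ is Corollary \ref{Coro 2-iki-Kap}. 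The specialization arrows inside each cycle are immediate: $\mathrm{(iii)}\Rightarrow\mathrm{(iv)}$, $\mathrm{(v)}\Rightarrow\mathrm{(vi)}$, $\mathrm{(xi)}\Rightarrow\mathrm{(xii)}$, and $\mathrm{(xiii)}\Rightarrow\mathrm{(xiv)}$ are obtained by restricting the quantifier to the group-ring $(\mathbb{Z}/p)[G]$ or $\mathbb{Q}[G]$; $\mathrm{(vii)}\Rightarrow\mathrm{(viii)}$ applies the homogeneity of idempotents to $R[G]$, whose base subring is $R$; $\mathrm{(viii)}\Rightarrow\mathrm{(ix)}$ takes $R=K$ a field (so the only idempotents are $0$ and $1$); and $\mathrm{(ix)}\Rightarrow\mathrm{(x)}$ takes $K=\mathbb{Q}$.

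All the real content sits in the five closing arrows back to (ii), which I would establish by contraposition. Assuming $G$ is not torsion-free, I fix a nonzero element of finite order and, depending on the statement, let $x$ be a nonzero element of prime order $p$ or of finite order $m\geqslant2$. For $\mathrm{(iv)}\Rightarrow\mathrm{(ii)}$ and $\mathrm{(vi)}\Rightarrow\mathrm{(ii)}$ I work in $(\mathbb{Z}/p)[G]$: the Frobenius endomorphism gives $(\epsilon_{0}-\epsilon_{x})^{p}=\epsilon_{0}-\epsilon_{px}=0$, so $\epsilon_{0}-\epsilon_{x}$ lies in the nilradical (hence in the Jacobson radical), yet its homogeneous component $\epsilon_{0}=1$ lies in neither, so neither radical is graded. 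For $\mathrm{(x)}\Rightarrow\mathrm{(ii)}$ the averaging element $e=m^{-1}\sum_{i=0}^{m-1}\epsilon_{ix}$ is a nontrivial idempotent of $\mathbb{Q}[G]$. For $\mathrm{(xii)}\Rightarrow\mathrm{(ii)}$ the identity $e(1-e)=0$ exhibits $\mathbb{Q}[G]$ as a non-domain. For $\mathrm{(xiv)}\Rightarrow\mathrm{(ii)}$ the element $1+e$ is a unit, with inverse $1-\tfrac12 e$, whose support contains both $0$ and $x$, so it is not of the form $r\epsilon_{g}$.

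I expect the only delicate point to be arranging the reverse counterexamples so that one construction serves uniformly. In particular, the tempting choice $1-2e$ for the non-homogeneous unit in (xiv) collapses to the homogeneous element $-\epsilon_{x}$ when $m=2$, which is why I instead use $1+e$, whose $\epsilon_{0}$- and $\epsilon_{x}$-coefficients are both nonzero for every admissible $m$. Beyond this bookkeeping, the argument is entirely a matter of citing the earlier theorems for the forward directions and reading off torsion-driven counterexamples for the reverse ones.
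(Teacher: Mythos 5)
Your proposal is correct, and its global architecture coincides with the paper's: both anchor all fourteen statements at the torsion-free condition (ii), cite the same structural results for the forward arrows (Corollary \ref{Coro 14-ondort} for (iii), Theorem \ref{Theorem II} for (v), Theorem \ref{Lemma T-K} for (vii), Corollary \ref{Coro 2-iki-Kap} for (xiii)), and close the loops with the same torsion-driven counterexamples (Frobenius for the radical statements, the averaging idempotent $e=m^{-1}\sum_{i=0}^{m-1}\epsilon_{ix}$ for the idempotent statements). There are two small but genuine departures. First, you close the domain cycle by proving $\mathrm{(xii)}\Rightarrow\mathrm{(ii)}$ directly via $e(1-e)=0$, whereas the paper routes $\mathrm{(xi)}\Rightarrow\mathrm{(xii)}\Rightarrow\mathrm{(x)}$ and lets the idempotent cycle do the work; the content is the same, yours just makes the cycle self-contained. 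Second, and more interestingly, for $\mathrm{(xiv)}\Rightarrow\mathrm{(ii)}$ the paper must split into cases: for order $n\geqslant3$ it uses the unit $1-2f$, but this collapses to the homogeneous element $-\epsilon_{g}$ when $n=2$, forcing a separate ad hoc unit $2\epsilon_{0}+\epsilon_{g}$ with explicitly computed inverse. Your choice $1+e$, with inverse $1-\tfrac{1}{2}e$ (valid since $e^{2}=e$), has nonzero coefficients at both $\epsilon_{0}$ and $\epsilon_{x}$ for every order $m\geqslant2$, so it handles all cases uniformly and eliminates the paper's case distinction — a clean simplification you correctly identified and justified.
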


\begin{proof} (i)$\Leftrightarrow$(ii): This is well-known. \\
(ii)$\Rightarrow$(iii): See Corollary \ref{Coro 14-ondort}. \\
(iii)$\Rightarrow$(iv): There is nothing to prove. \\
(iv)$\Rightarrow$(ii): If $G$ is not torsion-free, then it has a nonzero element $h\in G$ of finite order $n\geqslant2$. Let $p$ be a prime divisor of $n$, and note that $g:=(n/p)h\in G$ is a nonzero element of order $p$. Clearly in the group-ring $\mathbb{Z}_{p}[G]$ the element $\epsilon_{0}-\epsilon_{g}$ is nilpotent, because the Frobenius endomorphism yields that $(\epsilon_{0}-\epsilon_{g})^{p}=(\epsilon_{0})^{p}-(\epsilon_{g})^{p}
=\epsilon_{0}-\epsilon_{pg}=\epsilon_{0}-\epsilon_{0}=0$. But $\epsilon_{0}$ (as well as $\epsilon_{g}$) is not nilpotent which is a contradiction. \\
(ii)$\Rightarrow$(v): This is Theorem \ref{Theorem II}. \\
(v)$\Rightarrow$(vi): It is clear. \\
(vi)$\Rightarrow$(ii): It is proved exactly like the implication (iv)$\Rightarrow$(ii). \\
(ii)$\Rightarrow$(vii): This is Theorem \ref{Lemma T-K}. \\
(vii)$\Rightarrow$(viii)$\Rightarrow$(ix)$\Rightarrow$(x): All implications obvious. \\
(x)$\Rightarrow$(ii): If $G$ is not torsion-free, then it has a nonzero element $g\in G$ of finite order $n\geqslant2$. Then the element $f:=
(1/n)\sum\limits_{k=0}^{n-1}\epsilon_{kg}=(1/n)
\big(\epsilon_{0}+\epsilon_{g}+\cdots+\epsilon_{(n-1)g}\big)$ of the group-ring $\mathbb{Q}[G]$ is a nontrivial idempotent. Indeed, $f^{2}=(1/n^{2})\sum\limits_{0\leqslant k,d\leqslant n-1}\epsilon_{(k+d)g}=
n(1/n^{2})\sum\limits_{k=0}^{n-1}\epsilon_{kg}=f$. \\
(ii)$\Rightarrow$(xi): Suppose $(a\epsilon_{x})(b\epsilon_{y})=0$ for some $a,b\in R$ and some $x,y\in G$. Then $ab\epsilon_{x+y}=0$ and so $ab=0$. This yields that $a=0$ or $b=0$. Thus by Lemma \ref{Lemma 2-iki}, the zero ideal of $R[G]$ is a prime ideal. \\
(xi)$\Rightarrow$(xii)$\Rightarrow$(x): Clear. \\
(ii)$\Rightarrow$(xiii): It is clear from Corollary \ref{coro ET new 20}. \\
(xiii)$\Rightarrow$(xiv): Obvious. \\
(xiv)$\Rightarrow$(ii): If $G$ is not torsion-free, then it has a nonzero element $g\in G$ of finite order $n\geqslant2$. If $n=2$ then $2\epsilon_{0}+\epsilon_{g}$ is invertible in $\mathbb{Q}[G]$, because $(2\epsilon_{0}+\epsilon_{g})\cdot
\big((2/3)\epsilon_{0}-(1/3)\epsilon_{g}\big)
=\epsilon_{0}=1$. If $n\geqslant3$ then $f:=(1/n)\sum\limits_{k=0}^{n-1}\epsilon_{kg}=
(1/n)(\epsilon_{0}+\cdots+\epsilon_{(n-1)g})$ is an idempotent and so $1-2f=\big((n-2)/n\big)\epsilon_{0}-(2/n)
\sum\limits_{k=1}^{n-1}\epsilon_{kg}$ is invertible in $\mathbb{Q}[G]$. But $2\epsilon_{0}+\epsilon_{g}$ and $1-2f$ are not homogeneous which contradicts the hypothesis. 
\end{proof} 

We know that the nilradical of any ring is contained in its Jacobson radical. If the nilradical and the Jacobson radical of a ring are the same, then we call it a quasi-Jacobson ring. Note that this notion generalizes the classical ``Jacobson ring'' notion. Recall that a ring is called a \emph{Jacobson ring} if every prime ideal is an intersection of some maximal ideals. In the literature, Jacobson rings are also called Hilbert rings. One can show that a ring $R$ is Jacobson if and only if each quotient ring $R/I$ is quasi-Jacobson where $I$ is an ideal of $R$. It can be also seen that a ring $R$ is quasi-Jacobson if and only if its maximal spectrum $\Max(R)$ is Zariski dense in $\Spec(R)$. In this regard, we have the following result. 

\begin{theorem}\label{Lemma 3 NZD} If a $G$-graded ring $R=\bigoplus\limits_{x\in G}R_{x}$ with $G$ a torsion-free Abelian group has a non-zero-divisor homogeneous element of nonzero degree, then $R$ is quasi-Jacobson.
\end{theorem}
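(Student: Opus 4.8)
The plan is to prove the two-sided inclusion between the nilradical $\mathfrak{N}(R)$ and the Jacobson radical $\mathfrak{J}(R)$. One inclusion, $\mathfrak{N}(R)\subseteq\mathfrak{J}(R)$, holds in every ring, so the whole content is the reverse inclusion $\mathfrak{J}(R)\subseteq\mathfrak{N}(R)$. By Theorem~\ref{Theorem II}, the ideal $\mathfrak{J}(R)$ is graded; hence every element of $\mathfrak{J}(R)$ is a finite sum of its homogeneous components, each of which again lies in $\mathfrak{J}(R)$. Since $R$ is commutative, a sum of nilpotents is nilpotent, so it suffices to prove that every \emph{homogeneous} element $a\in\mathfrak{J}(R)$ is nilpotent. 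Invoking Corollary~\ref{Coro 14-ondort}, the nilradical is the intersection of the minimal primes of $R$, so I reduce further to showing $a\in\mathfrak{p}$ for each minimal prime $\mathfrak{p}$.

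So let $a\in R_{e}\cap\mathfrak{J}(R)$ and fix a minimal prime $\mathfrak{p}$. By Corollary~\ref{Coro 14-ondort} the ideal $\mathfrak{p}$ is graded, so $R/\mathfrak{p}$ is a $G$-graded integral domain, and I write $\overline{(\,\cdot\,)}$ for images in this quotient. Let $u\in R_{d}$ be the given homogeneous non-zero-divisor of degree $d\neq0$. Because every minimal prime of a commutative ring is contained in the set of zero-divisors, $u\notin\mathfrak{p}$, and therefore $\bar{u}$ is a nonzero homogeneous element of $R/\mathfrak{p}$ of degree $d$. Now I exploit that $a\in\mathfrak{J}(R)$: for every $r\in R$ the element $1+ra$ is a unit of $R$, so in particular $1+u^{n}a\in U(R)$ for each $n\geqslant1$, and its image $1+\bar{u}^{\,n}\bar{a}$ is a unit of the domain $R/\mathfrak{p}$. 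By Lemma~\ref{Theorem V}, every unit of a $G$-graded integral domain is homogeneous; thus $1+\bar{u}^{\,n}\bar{a}$ must be homogeneous.

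The final step is a degree comparison. The element $1+\bar{u}^{\,n}\bar{a}$ has a component $1$ in degree $0$ and a component $\bar{u}^{\,n}\bar{a}$ in degree $nd+e$. I choose $n$ so that $nd+e\neq0$; this is always possible because $G$ is torsion-free and $d\neq0$, so $d$ has infinite order and the equation $nd=-e$ can hold for at most one value of $n$. For such an $n$ the two displayed components sit in distinct degrees, and the degree-$0$ component $1$ is nonzero; homogeneity of $1+\bar{u}^{\,n}\bar{a}$ then forces the degree-$(nd+e)$ component to vanish, i.e. $\bar{u}^{\,n}\bar{a}=0$. Since $R/\mathfrak{p}$ is a domain and $\bar{u}\neq0$, this gives $\bar{a}=0$, that is $a\in\mathfrak{p}$. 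As $\mathfrak{p}$ was an arbitrary minimal prime, $a\in\mathfrak{N}(R)$, completing the reduction and hence the proof.

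I would expect the main obstacle to be precisely the interaction between the two degree-zero phenomena: one must guarantee simultaneously that $\bar{u}$ does not die in the quotient (which is exactly what the non-zero-divisor hypothesis buys, via minimal primes consisting of zero-divisors) and that the exponent $n$ can be chosen to push $\bar{u}^{\,n}\bar{a}$ away from degree $0$ (which is exactly where torsion-freeness and the nonzero degree of $u$ are used). Neither the mere existence of a homogeneous non-zero-divisor of degree $0$ nor a torsion grading would support this argument, which is consistent with the hypotheses of the theorem.
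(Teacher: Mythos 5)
Your proof is correct, but it follows a genuinely different route from the paper's. The paper never reduces to homogeneous elements: it takes an arbitrary $f=\sum_{x\in G}r_{x}\in\mathfrak{J}(R)$, uses that the degree $y$ of the non-zero-divisor $a$ has infinite order to pick a \emph{single} exponent $N$ with $x+Ny\neq0$ for all $x\in\Supp(f)$, observes that the unit $1+fa^{N}$ has degree-zero component equal to $1$, and then invokes Theorem~\ref{Theorem ix} (distinct double products of the homogeneous components of a unit are nilpotent) to conclude that each $r_{x}a^{N}$ is nilpotent; cancelling the non-zero-divisor $a^{N}$ makes each $r_{x}$ nilpotent, and the (finite, commuting) sum $f$ is nilpotent. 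You instead first invoke Theorem~\ref{Theorem II} to split elements of $\mathfrak{J}(R)$ into homogeneous pieces, then argue prime-by-prime: minimal primes are graded (Corollary~\ref{Coro 14-ondort}) and consist of zero-divisors, so $u$ survives in each quotient domain $R/\mathfrak{p}$, and Lemma~\ref{Theorem V} plus a degree comparison kills $\bar a$. The two arguments use the non-zero-divisor hypothesis in different ways --- the paper for cancellation of $a^{N}$ against a nilpotent, you for nonvanishing of $u$ modulo minimal primes --- and your route trades the paper's reliance on the unit characterization Theorem~\ref{Theorem ix} for reliance on the homogeneity of the Jacobson radical (Theorem~\ref{Theorem II}), which the paper's proof does not need at all. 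Since Theorem~\ref{Theorem ix} is itself proved via Corollary~\ref{Coro 14-ondort} and Lemma~\ref{Theorem V}, both arguments ultimately rest on the same foundation, but yours is self-contained at the level of those two ingredients and makes the role of each hypothesis (where torsion-freeness enters, where the non-zero-divisor enters) somewhat more transparent, at the cost of importing the heavier Theorem~\ref{Theorem II}.
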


\begin{proof} It suffices to show that $\mathfrak{J}(R)\subseteq\mathfrak{N}(R)$. Take $f=\sum\limits_{x\in G}r_{x}\in\mathfrak{J}(R)$ where $r_{x}\in R_{x}$ for all $x\in G$. Let $a\in R_{y}$ be a non-zero-divisor of $R$ for some $0\neq y\in G$.  
Since $y$ is of infinite order (because $G$ is torsion-free) and since $\Supp(f)$ is a finite set, we can find a natural number $N\geqslant1$ such that $x+Ny\neq0$ for all $x\in\Supp(f)$. We know that $1+fa^{N}$ is invertible in $R$. Note that the homogeneous component of degree zero of the element $1+fa^{N}$ is 1. Then using Theorem \ref{Theorem ix} and the fact that $a^{N}$ is a non-zero-divisor of $R$, we have $r_{x}a^{N}$ and so $r_{x}$ are nilpotent for all $x\in\Supp(f)$. Hence, $f=\sum\limits_{x\in\Supp(f)}r_{x}$ is nilpotent.  
\end{proof}

\begin{corollary}\label{Coro 2 Gro} Let $R=\bigoplus\limits_{m\in M}R_{m}$ be an $M$-graded ring where $M$ is a totally ordered monoid with the cancellation property. If $R$ has a non-zero-divisor homogeneous element of nonzero degree, then $R$ is quasi-Jacobson. 
\end{corollary}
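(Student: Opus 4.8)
The plan is to reduce the statement to Theorem~\ref{Lemma 3 NZD} by the same change-of-grading device that underlies the other corollaries in this section: one replaces the monoid grading by a grading over the Grothendieck group of $M$, which is torsion-free Abelian precisely because $M$ is totally ordered and cancellative. Since the quasi-Jacobson property ($\mathfrak{N}(R)=\mathfrak{J}(R)$) is intrinsic to the ring and does not depend on how $R$ is graded, it suffices to exhibit a single grading of $R$ to which Theorem~\ref{Lemma 3 NZD} applies.

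First I would let $G$ denote the Grothendieck group of $M$. As recalled in Section~\ref{Sec:Prelim} and in the paragraph preceding Corollary~\ref{Coro onalti 16}, the hypothesis that $M$ is a totally ordered cancellative monoid guarantees that $G$ is a totally ordered group, hence a torsion-free Abelian group. The canonical morphism of monoids $M\rightarrow G$, $m\mapsto[m,0]$, is injective because $M$ has the cancellation property. I would then change the grading along this morphism, viewing $R=\bigoplus_{x\in G}S_{x}$ as a $G$-graded ring with $S_{x}=\sum_{[m,0]=x}R_{m}$. Injectivity of $M\rightarrow G$ ensures that the nonzero homogeneous components are preserved: an element homogeneous of $M$-degree $m$ becomes homogeneous of $G$-degree $[m,0]$, and the sets of homogeneous elements of $R$ in the two gradings coincide.

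Next I would transport the hypothesis. If $a\in R_{m}$ is a non-zero-divisor homogeneous element of nonzero degree $m$ in $M$, then in the $G$-grading $a$ is homogeneous of degree $[m,0]$, and $[m,0]\neq[0,0]$ exactly because $M\rightarrow G$ is injective and $m$ is not the identity of $M$. Thus $a$ is a non-zero-divisor homogeneous element of nonzero degree in the $G$-graded ring $R$, and Theorem~\ref{Lemma 3 NZD} yields that $R$ is quasi-Jacobson, which is the desired conclusion. The only point requiring attention is the preservation of the nonzero-degree condition, and this is immediate from cancellativity via the injectivity of $M\rightarrow G$; there is no genuine obstacle here, the argument being a routine instance of the Grothendieck-group passage.
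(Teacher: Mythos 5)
Your proposal is correct and follows exactly the paper's own proof: the paper also deduces the corollary from Theorem~\ref{Lemma 3 NZD} by passing to the $G$-grading where $G$ is the Grothendieck group of $M$, a torsion-free (totally ordered) Abelian group. Your additional verifications—that cancellativity makes $M\rightarrow G$ injective, hence preserves homogeneous components and the nonzero-degree condition—are precisely the routine details the paper leaves implicit.
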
 

\begin{proof} It follows from Theorem \ref{Lemma 3 NZD} by passing to the $G$-grading where $G$ is the Grothendieck group of $M$ which is a torsion-free group.  
\end{proof}

\begin{corollary}\label{Coro 3 nice} Let $R$ be a ring and $M$ a nonzero totally ordered monoid with the cancellation property. Then the monoid-ring $R[M]$ is quasi-Jacobson. 
\end{corollary}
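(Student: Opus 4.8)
The plan is to apply Corollary~\ref{Coro 2 Gro} directly to the monoid-ring $R[M]$, regarded as an $M$-graded ring. Recall that $R[M]=\bigoplus\limits_{m\in M}R\epsilon_{m}$ is an $M$-graded ring with homogeneous components $R\epsilon_{m}$, and that $M$ is by hypothesis a totally ordered monoid with the cancellation property. Thus the only ingredient I need to supply in order to invoke Corollary~\ref{Coro 2 Gro} is a non-zero-divisor homogeneous element of $R[M]$ of nonzero degree.

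First I would use the hypothesis that $M$ is nonzero: this guarantees an element $m\in M$ with $m\neq0$, where $0$ denotes the identity element of $M$. The element $\epsilon_{m}$ is then homogeneous of degree $m\neq0$ in the $M$-grading of $R[M]$. Next, since $M$ has the cancellation property, $\epsilon_{m}$ is a non-zero-divisor of $R[M]$ (a fact already recorded earlier in the discussion of monoid-rings). Hence $\epsilon_{m}$ is precisely the required non-zero-divisor homogeneous element of nonzero degree, and Corollary~\ref{Coro 2 Gro} immediately yields that $R[M]$ is quasi-Jacobson, i.e. $\mathfrak{N}(R[M])=\mathfrak{J}(R[M])$.

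There is essentially no obstacle here: all the genuine content has already been absorbed into Corollary~\ref{Coro 2 Gro} (and ultimately into Theorem~\ref{Lemma 3 NZD}), so this statement is purely a matter of exhibiting the canonical homogeneous non-zero-divisor $\epsilon_{m}$. The single point at which the hypotheses are actually used is that $M$ be nonzero and cancellative; were $M=\{0\}$, the ring $R[M]$ would reduce to $R$ with only the trivial grading and would carry no non-zero-divisor of nonzero degree, so the ``nonzero'' assumption cannot be dropped.
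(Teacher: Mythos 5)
Your proof is correct and takes essentially the same route as the paper: both exhibit $\epsilon_{m}$ (for some $0\neq m\in M$) as a homogeneous non-zero-divisor of nonzero degree, using the cancellation property, and then invoke Corollary~\ref{Coro 2 Gro}. The only (entirely trivial) difference is that the paper first disposes of the case $R=0$, where $\epsilon_{m}=0$ is not a homogeneous element in the paper's sense (homogeneous elements are nonzero by definition) so the hypothesis of Corollary~\ref{Coro 2 Gro} cannot literally be verified, though the zero ring is of course quasi-Jacobson.
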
 

\begin{proof} We may assume $R$ is a nonzero ring (the assertion is clear for the zero ring). Since $M$ has the cancellation property, $\epsilon_{m}$ is a non-zero-divisor of $R[M]$ for all $m\in M$. Since $M$ is nonzero, $R[M]$ has a non-zero-divisor homogeneous element of nonzero degree.
Then apply the above corollary.
\end{proof}

By the above result, for any ring $R$, the polynomial ring $R[x]$ is quasi-Jacobson. If $G$ is a nonzero torsion-free group, then the group-ring $R[G]$ is quasi-Jacobson. In particular, the ring of Laurent polynomials $R[x,x^{-1}]$ is quasi-Jacobson.

\end{document}